\newcommand{\cE}{\mathcal{E}}
\newcommand{\cB}{\mathcal{B}_{J_{0}}}
\newcommand{\cBi}{\mathcal{B}_{J_{\infty}}}
\newcommand{\cQuad}{\mathcal{Q}uad}
\newcommand{\ca}{\underline{a}}
\newcommand{\gm}{\mathbb{G}_{m}}
\newcommand{\clz}{lim_{w}}
\newcommand{\cli}{lim_{w'}}
\newcommand{\Address}{{
  \bigskip
  \footnotesize

  Rahul Singh, \textsc{Department of Mathematics, Louisiana State University, Baton Rouge, LA, USA}\par\nopagebreak
  \textit{E-mail address}: \texttt{rahulsingh@lsu.edu}
}}
\theoremstyle{plain}
\newtheorem{theorem}{Theorem}[section]
\newtheorem{proposition}{Proposition}[section]
\newtheorem{corollary}{Corollary}[theorem]
\newtheorem{lemma}[theorem]{Lemma}
\newtheorem{fact}{Fact}[section]
\newtheorem{convention}{Convention}[section]
\newtheorem{notation}{Notation}[section]
\theoremstyle{definition}
\newtheorem{definition}{Definition}
\theoremstyle{remark}
\newtheorem{remark}{Remark}[section]
\newtheorem{example}{Example}
\title{Counting parabolic principal $G$-bundles
with nilpotent sections over $\mathbb{P}^1$}
\author{Rahul Singh}
\date{}
\DeclareMathOperator{\rk}{rk}
\DeclareMathOperator{\Exp}{Exp}
\DeclareMathOperator{\aut}{Aut}
\DeclareMathOperator{\Sym}{Sym}
\DeclareMathOperator{\lie}{Lie}
\DeclareMathOperator{\diag}{diag}
\DeclareMathOperator{\fin}{fin}
\def\subsubsection{\@startsection{subsubsection}{3}%
\z@{.5\linespacing\@plus.2\linespacing}{-.5em}%
{\smallfont\bfseries}}
\begin{document}
\begin{abstract}
Let $G$ be a split connected reductive group over $\mathbb{F}_q$ and let $\mathbb{P}^1$ be the projective line over $\mathbb{F}_q$. Firstly, we give an explicit formula for the number of $\mathbb{F}_{q}$-rational points of generalized Steinberg varieties of $G$. Secondly, for each principal $G$-bundle over $\mathbb{P}^1$, we give an explicit formula counting the number of triples consisting of parabolic structures at $0$ and $\infty$ and a compatible nilpotent section of the associated adjoint bundle. In the case of $GL_{n}$ we calculate a generating function of such volumes re-deriving a result of Mellit.
\end{abstract}
\maketitle
\renewcommand\abstractname{\textbf{Abstract}}
\tableofcontents
\section{Introduction}
Let $X$ be a smooth projective geometrically connected curve over a finite field $\mathbb{F}_{q}$ with $q$ elements. In a breakthrough paper \cite{Sch}, O.~Schiffmann computed the number of stable Higgs bundles over $X$ of coprime rank and degree when char($\mathbb{F}_{q}$) is sufficiently large (see \cite[Theorem 1.2]{Sch}). In a later paper \cite{MS20} with Mozvogoy, the condition on char($\mathbb{F}_{q}$) was removed. A major step in their calculation is computing the weighted number of vector bundles over $X$ with nilpotent endomorphisms.
A.~Mellit in \cite{Mel} has generalized the result of Mozvogoy and Schiffmann to the parabolic case. In particular, Mellit 
 counts vector bundles over $X$ with nilpotent endomorphisms preserving parabolic structures at marked points. An important part of his calculation is the case of $\mathbb{P}^{1}$ and two marked points, this case allows him to relate the count with modified Macdonald polynomials. It is a natural question to generalize Mellit's calculations to arbitrary reductive groups. In this paper, we complete this step, namely, we count the number of principal $G$-bundles over $\mathbb{P}^1$ with nilpotent sections of adjoint bundles compatible with parabolic structures at $0$ and $\infty$ for any split connected reductive group over $\mathbb{F}_{q}$ (Theorem \ref{steinberg} and Theorem \ref{trip}). We note that the paper solves 
 an important counting problem in the very
first nontrivial case; of course, in view of the original problem of counting semistable parabolic $G$-Higgs bundles over a general curve $X$, the methods used in this paper have a limited scope as it used some rather special features of the
situation (see the explicit description of principal $G$-bundles over $\mathbb{P}^1$ given in Section \ref{basic_principal bundles} and also Remark \ref{twopointsonto}$(ii)$). In principle, if we had some sort of
a product formula as in Proposition \ref{mellit's result}, it would be possible to solve the similar problem
for $\mathbb{P}^1$ with arbitrary number of points. This would pave the way for counting $G$-Higgs
bundles over $\mathbb{P}^1$ with marked points.

In the case of $\mathbb{P}^1$, Mellit uses Hall algebras, which are not easily accesible for a general reductive group. Instead, we use geometric techniques in our proof. We also derive the result in the case of $GL_n$ using our methods.
The counting has two important steps. 
In the first step, we give an explicit formula for the number of points of generalized Steinberg varieties in Theorem \ref{steinberg}. To this end we introduce a coproduct for any reductive group, which might be of independent interest. In the second step, we reduce the problem to counting the number of points of generalized Steinberg varieties using Bialynicki--Birula decomposition in Theorem \ref{trip}. We note that the applicability of Bialynicki--Birula decomposition is not obvious since the schemes that we work with are neither smooth nor projective.

In \cite[Section 7]{Mel}, the generating function of parabolic vector bundles over $X$ with nilpotent endomorphisms preserving parabolic structures at marked points is related to the generating function of the semistable parabolic Higgs bundles in three steps. In the first step, the former function
is related to the generating function of parabolic vector bundles over $X$ with all endomorphisms preserving parabolic structures at marked points via plethystic exponential (see \cite[Section 7.2]{Mel} ). In the next step, the latter count 
is related to the count of parabolic Higgs bundles over $X$ via Serre duality for parabolic vector bundles (see \cite[Section 7.3]{Mel} ). In the last step, the generating function of parabolic Higgs bundles over $X$ is related to the the generating function of semistable parabolic Higgs bundles over $X$ via an integration map (see \cite[Section 7.4]{Mel}). 
To the best of my knowledge, the tools used in the above reductions, that is, plethystic exponential, Serre duality for parabolic vector bundles and the integration map, do not have an analogue for a general reductive group and might be problems of independent interest.

\hspace*{-5.5mm}
\textbf{Acknowledgements.} 
The author would like to thank his PhD advisor Roman Fedorov for being very patient in answering various questions and his help with editing numerous versions of this paper. 
The author is grateful to Bogdan Ion for several helpful discussions. 
The author is partially supported by NSF DMS grants $1764391$ and
$2001516$. A part of the work was done while visiting the University of Lyon. Finally, the author would like to thank anonymous referees for useful comments and interesting questions.
\section{Preliminaries}\label{section2}

\begin{convention}
$k$ denotes an arbitrary field. When $k$ is fixed, we denote by $\mathbb{P}^{1}$ the projective line over $k$ and by $\gm$ the multiplicative $k$-group $\mathbb{G}_{m,k}$. We denote by $\mathbb{F}_{q}$ the finite field with $q$ elements. For any scheme $X$ over $\mathbb{F}_{q}$, denote by $|X|$ the number of $\mathbb{F}_{q}$-rational points of $X$. 
\end{convention}
\subsection{Split reductive groups and its Lie algebras}
By an \emph{affine algebraic group} over $k$, we mean a smooth affine $k$-group scheme. A torus over $k$ is said to be \emph{split} if it is isomorphic to $\mathbb{G}^{r}_{m}$ for some $r$. A connected affine algebraic group $G$ over $k$ is said to be \emph{reductive} (\cite[Section 6.46]{Mil1}) if $G_{\overline{k}}$ is reductive.
Recall that a connected reductive group over $k$ is called \emph{split} (\cite[Definition 19.22]{Mil1}) if it contains a maximal torus that is split.

We will denote the Lie algebra of an affine algebraic group $G$ over $k$ by $\mathfrak{g}$ or sometimes by Lie$(G)$. 

Recall that an element $x\in\mathfrak{g}$ is said to be \emph{nilpotent} if $r(x)$ is nilpotent for every Lie algebra homomorphism $r:\mathfrak{g}\rightarrow \mathfrak{g}\mathfrak{l}(V)$, where $V$ varies over all finite dimensional vector spaces over $k$.

\subsection{Parabolic and Levi $k$-subgroups}\label{parabolic-levi}

Recall that a smooth closed $k$-subgroup $P\subset G$ is parabolic
if the coset space $G/P$ is proper over $k$ (see \cite[Definition 17.15]{Mil1}). Since $G/P$ is quasi-projective over $k$ (see \cite[Theorem 8.44]{Mil1}), we see that for a parabolic $k$-subgroup $P$ of $G$, $G/P$ is projective over $k$.
By a Levi $k$-subgroup of $G$ we mean a Levi factor of a parabolic $k$-subgroup.

In the rest of the paper, $G$ will denote a split connected reductive group over $k$ with a fixed split maximal torus $T$ and a Borel $k$-subgroup $B$ containing $T$ with unipotent radical $U$. Denote by $W$ the Weyl group of $G$ relative to $T$. Further, $X^{*}(T):=\text{Hom}_{k}(T,\gm)$ and $X_{*}(T):=\text{Hom}_{k}(\gm,T)$ will denote the lattices of $k$-characters of $T$ and $k$-cocharacters of $T$ respectively. There is a natural perfect pairing $X^{*}(T)\times X_{*}(T)\rightarrow\mathbb{Z}$, which we denote by $\langle\cdot,\cdot\rangle$. Next, $\Pi\subset\Phi^{+}\subset\Phi\subset X^{*}(T)$ will denote the corresponding simple roots, the positive roots and the root system (see \cite[Chapter 21]{Mil1}).
\subsection{Parametrization of standard parabolic $k$-subgroups}\label{parabolic subgroups}
Let us now recall the  description of standard parabolic $k$-subgroups of $G$ and their Levi factors. Pick $J\subset\Pi$ and 
let $L_J$ be the the scheme-theoretic centralizer of the identity component of $(\bigcap_{\alpha\in J}\text{Ker }\alpha)_{\text{red}}$. Then $L_{J}$ is a split reductive $k$-group with root system $\Phi_{J}:=\mathbb{Z}J\cap\Phi$ (\cite[Proposition 21.90]{Mil1}). Next, let $U_{J}$ be the $k$-subgroup of $G$ generated by $U_{\alpha}$ (root subgroups), $\alpha\in\Phi^{+}\setminus\Phi_{J}$. Then $P_{J} := L_{J}U_{J}$ is a parabolic $k$-subgroup and $U_{J}$ is the unipotent radical of $P_{J}$ (\cite[Theorem 21.91]{Mil1}). The subgroups $P_{J}$ are called standard parabolic $k$-subgroups and the subgroups $L_{J}$ are called standard Levi $k$-subgroups.
It is known that every parabolic $k$-subgroup is $G(k)$-conjugate to $P_J$ for a unique $J\subset\Pi$ (see \cite[Theorem 21.91 and Theorem 25.8]{Mil1}). It follows that in the case of $G=GL_n$, parabolic $k$-subgroups are precisely the stabilizers of flags in $k^{n}$ and that the Levi $k$-subgroups are precisely the stabilizers of
ordered direct sum decompositions $k^{n}=V_{1}\oplus\ldots\oplus V_{m}$.
\begin{notation}
We denote by $X_{+}(T)$ the semilattice of dominant $k$-cocharacters of $T$, i.e, $\lambda\in X_{+}(T)$ if and only if $(\alpha,\lambda)\in\mathbb{Z}_{\geq 0}$ for all $\alpha\in\Phi^{+}$. We note that every $W$-orbit of $X_{*}(T)$ contains exactly one element of $X_{+}(T)$, so we can idenitfy $X_{+}(T)$ with $X_{*}(T)/W$.
\end{notation}
\begin{convention}
For any two schemes $X$ and $Y$ over $k$, we will denote $X\times_{k}Y$ by $X\times Y$.
\end{convention}
\subsection{Principal $G$-bundles over $\mathbb{P}^{1}$}\label{basic_principal bundles} 
Let $H$ be an affine algebraic group over $k$. Let us recall the construction of associated bundles. 
Let $Z$ be a quasi-projective $k$-scheme equipped with a left $H$-action and let $\cE$ be a principal $H$-bundle over a $k$-scheme $S$. 
Then we denote by $\cE\times^{H}Z$ (or sometimes $\cE(Z)$) the associated bundle with fibre type $Z$, which is the following scheme (see \cite[Proposition 3.1]{Fed2}): $\cE\times^{H}Z=(\cE\times Z)/H$ for the right action of $H$ on $\cE\times Z$ given by $h\cdot(e,z)=(e\cdot h,h^{-1}\cdot z)$.

\begin{definition}
Let $H$ and $M$ be affine
algebraic groups over $k$ and let $\cE$ be a principal $H$-bundle over a $k$-scheme $S$. If $\rho:H\rightarrow M$ is a homomorphism of groups defined over $k$, then the associated bundle $\cE\times^{H}M$ for the action of $H$ on $M$ by left multiplication through $\rho$, is naturally a principal $M$–bundle over $S$. We denote this principal $M$–bundle over $S$ often by $\rho_{*}\cE$ and we say this principal $M$-bundle is obtained from $\cE$ by extension of structure group.
\end{definition}
Consider the $\gm$-bundle $\mathcal{O}(1)^\times$ over $\mathbb{P}^{1}$, which is $\mathcal{O}(1)$ minus the zero section. For $\mu\in X_{*}(T)$, define a principal $G$-bundle over $\mathbb P^1$ as:
\[
\cE_{\mu}:=\mu_{*}\mathcal{O}(1)^{\times}
\]
where we view $\mu$ as a morphism $\mu:\mathbb{G}_{m}\rightarrow G$.
Next, every Zariski locally trivial principal $G$-bundle $\cE$ over $\mathbb{P}^{1}$ is isomorphic to exactly one $\cE_{\mu}$, $\mu\in X_{+}(T)$ (see \cite[Theorem 4.2]{Ra}) and in the case $k=\mathbb{F}_{q}$ every principal $G$-bundle is isomorphic to exactly one $\cE_{\mu}$, $\mu\in X_{+}(T)$ (see \cite[Theorem 3.8a)]{Gi} and \cite{La}). Let ad$(\cE_{\mu})$ denote the adjoint vector bundle over $\mathbb{P}^1$ associated to $\cE_\mu$. Note that ad$(\cE_\mu)=\mathcal{O}(1)^{\times}\times^{\mathbb{G}_{m}}\mathfrak{g}$, i.e, it is the quotient of $\mathcal{O}(1)^{\times}\times\mathfrak{g}$ under the action of $\gm$ given by $g\cdot(e,f)=(e\cdot g,\text{Ad}_{\mu(g^{-1})}(f))$, $e\in\mathcal{O}(1)^{\times},f\in\mathfrak{g},g\in\gm$. The sheaf of sections of the adjoint vector bundle ad$(\cE_{\mu})$ form a sheaf of Lie algebras and thus $H^{0}(\mathbb{P}^{1},\text{ad}(\cE_{\mu}))$ has the structure of a Lie algebra. Nilpotent elements of the Lie algebra $H^{0}(\mathbb{P}^{1},\text{ad}(\cE_{\mu}))$ are called nilpotent sections of ad$(\cE_{\mu})$.

\begin{remark}
The statement that every principal $G$-bundle over $\mathbb{P}^1$ is Zariski-locally trivial holds for more general fields.
Recall that a field $k$ is of dimension $\leq 1$ if $Br K=0$ for every algebraic extension $K$ of $k$ (\cite[Proposition 1.5.25]{Poo}). Let $k$ be a perfect field of $\dim k\leq 1$. Then a theorem of Steinberg (see \cite[Theorem 1.9]{Ste}) says that $H^{1}(k,G)=\{*\}$. Therefore by \cite[Theorem 3.8a)]{Gi}, every principal $G$-bundle over $\mathbb{P}^1$ is Zariski-locally trivial in this case.
\end{remark}
\section{Main Results}\label{Main result}
In this section we formulate the main results of this paper. In the special case $G=GL_n$ and $k=\mathbb{F}_{q}$, they give a counting result of Mellit \cite[Section 5.4]{Mel}.
\subsection{Coproduct}\label{coproduct}
Let $H$ be a split connected reductive group over $\mathbb{F}_q$ with a split maximal torus $T_{H}$ and let $B_{H}$ be a Borel $\mathbb{F}_{q}$-subgroup containing $T_H$. Let $\Pi_H\subset X^{*}(T_{H})$ denote the corresponding set of simple roots of $H$. For $J\subset\Pi_H$, let $P_{J}$ denote the standard parabolic $\mathbb{F}_{q}$-subgroup of $H$ and let $L_{J}$ denote the standard Levi factor of $P_J$ (see Section \ref{parabolic subgroups}).
Let $W_H$ denote the Weyl group of $H$ relative to $T_H$ and let $J_{1},J_{2}\subset\Pi_{H}$. We let $W_{i}$ denote the subgroup of $W_{H}$ generated by $s_\alpha$, $\alpha\in J_i$, $i=1,2$. First we make the following conventions for the rest of Section \ref{coproduct} and Section \ref{3.2}:
\begin{convention}
 When the group is clear from the context, we will omit the subscripts and the superscripts $H$, e.g. $T_H$ will be denoted by $T$.   
\end{convention}
We need the following notation:
\begin{notation}
\begin{enumerate}[(i)]
\item
It is known that every double coset in $W_{1}\backslash W/W_{2}$ has a unique minimal length representative $($see \cite[Proposition 2.7.3]{Car}$)$ and we denote this set of representatives by $D^{H}_{J_{1},J_{2}}$. 
\item 
For $J\subset\Pi$, denote by $W_J \subset W$ the subgroup generated by $s_{\alpha}, \alpha \in J$.
For $\mu\in X_{*}(T)$, let $\Pi_{\mu}\subset\Pi$ denote the set of simple roots that are annihilated by $\mu$ and denote by $L_{\mu}$ the identity component of the centralizer of $\mu(\mathbb{G}_{m})$ in $H$. Since $\lie(L_{\mu})=\lie(L_{\Pi_{\mu}})$ (\cite[Theorem 13.33]{Mil1} and Section \ref{parabolic subgroups}), $L_{\mu}=L_{\Pi_{\mu}}$. We note that
 $\Pi_\mu$ is the set of simple roots of $L_{\mu}$ corresponding to $T$ and $B\cap L_{\mu}$. In the special case $H=GL_n$, if $\mu$ is of the form 
\[
t\mapsto\text{diag}(\underbrace{t^{m_1},\ldots,t^{m_1}}_{i_1\text{ times}},\ldots,\underbrace{t^{m_s},\ldots,t^{m_s}}_{i_s\text{ times}}),
\quad m_i\neq m_j \text{ for } i\neq j, m_j\in\mathbb{Z} \text{ for } 1\leq j\leq s,
\]
then $L_\mu=GL_{i_{1}}\times\ldots\times GL_{i_{s}}$.
\end{enumerate}
\end{notation}
Let $\mathcal{P}(\Pi)$ denote the set of subsets of $\Pi$. 
We let $\mathbb{Z}[\mathcal{P}(\Pi)]$ denote the lattice of functions on $\mathcal{P}(\Pi)$ taking values in $\mathbb{Z}$.
For any $f\in\mathbb{Z}[\mathcal{P}(\Pi)]$, define 
\[
\Delta_{H}(f):\mathcal{P}(\Pi)\times\mathcal{P}(\Pi)\rightarrow\mathbb{Z},
\]
which is given by 
\[
\Delta(f)(J_1,J_2):=\sum_{w\in D_{J_{1},J_{2}}}f(J_{1}\cap w\cdot J_{2}).
\]
We will call $\Delta(f)$ as the coproduct of $f$. We have:
\[
\Delta: \mathbb{Z}[\mathcal{P}(\Pi)]\rightarrow \mathbb{Z}[\mathcal{P}(\Pi)]\otimes\mathbb{Z}[\mathcal{P}(\Pi)]\cong\mathbb{Z}[\mathcal{P}(\Pi)\times\mathcal{P}(\Pi)].
\]
\subsection{Generalized Springer and generalized Steinberg varieties}\label{3.2}
For any $J\subset \Pi$, let $Sp_{H}(J)$ denote the \emph{generalized Springer variety} of $H$ with respect to $J$, which is defined as the following scheme of pairs:
\[
Sp_{H}(J):=\{(n,P):P \text{ is $\mathbb{F}_{q}$-conjugate to }P_{J},n \text{ is nilpotent}, n\in \text{Lie}(P)\}.
\]
In particular, $P$ is a parabolic subgroup defined over $\mathbb{F}_{q}$.
For any two subsets $J_1, J_2\subset \Pi_H$, let $St_{H}(J_1,J_2)$ denote the \emph{generalized Steinberg variety} of $H$ with respect to $J_1$ and $J_2$, which is defined as the following scheme of triples:
\[
St_{H}(J_1,J_2):=\{(n,P,Q):P \text{ (resp. }Q)\text{ is $\mathbb{F}_{q}$-conjugate to }P_{J_{1}}\text{ (resp. }P_{J_{2}}), n\text{ is nilpotent}, n\in \text{Lie}(P)\cap \text{Lie}(Q)\}.
\]
In particular, $P$ and $Q$ are parabolic subgroups defined over $\mathbb{F}_{q}$. Observe that $Sp(J)\cong St(\Pi,J)$.

Define
\[
[Sp_H]:\mathcal{P}(\Pi)\rightarrow\mathbb{Z},
\quad J\mapsto|Sp(J)|
\]
and define
\[
[St_H]:\mathcal{P}(\Pi)\times\mathcal{P}(\Pi)\rightarrow\mathbb{Z},\quad (J_1,J_2)\mapsto|St(J_1,J_2)|.
\]
Let $\Phi_{H}$ denote the root system of $H$ with respect to $T$ and let $\Phi_{H}^{+}$ denote the set of positive roots with respect to $B$ and $T$. For $J\subset\Pi$, let $\Phi_{J}$ denote the root system of $L_J$ with respect to $T$ and let $\Phi_{J}^{+}$ denote the set of positive roots with respect to $B\cap L_J$ and $T$.

Before stating the first main theorem, let us recall the notion of rank of an affine algebraic group. 
\begin{notation}
 Let $M$ be an affine algebraic group over $\mathbb{F}_{q}$ and let $\mathfrak{m}$ be the associated Lie algebra. Recall that the rank of $M$ is the dimension of a maximal torus of $M$ or equivalently the dimension of a Cartan subalgebra of $\mathfrak{m}$. We will denote the rank of $M$ by $\rk(M)$ or $\rk(\mathfrak{m})$.
\end{notation}
The following theorem gives an explicit formula for the number of points of generalized Steinberg varieties: 
\begin{theorem}\label{steinberg}
We have
\begin{enumerate}[(i)]
\item $|Sp(J)|=q^{|\Phi_{J}^{+}|+|\Phi^{+}|}\sum_{w\in W/W_J}q^{l(w)}$, where $l(w)$ represents the minimal length of the elements in $wW_J$.
\item $\Delta([Sp])=[St]$.
\end{enumerate}
\end{theorem}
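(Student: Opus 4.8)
The plan is to prove (i) by fibering $Sp_{H}(J)$ over the variety of parabolic subgroups of type $J$, and to prove (ii) by stratifying $St_{H}(J_{1},J_{2})$ by the relative position of the pair of parabolics and matching, on $\mathbb{F}_{q}$-points, each stratum with $Sp_{H}$ of an appropriate subset of $\Pi_{H}$.

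For (i) I would first note that the parabolic $\mathbb{F}_{q}$-subgroups of $H$ that are $\mathbb{F}_{q}$-conjugate to $P_{J}$ are exactly the $\mathbb{F}_{q}$-points of the partial flag variety $H/P_{J}$, since $H^{1}(\mathbb{F}_{q},H)$ is trivial by Lang's theorem and hence $(H/P_{J})(\mathbb{F}_{q})=H(\mathbb{F}_{q})/P_{J}(\mathbb{F}_{q})$; the Bruhat decomposition $H/P_{J}=\bigsqcup_{w\in W_{H}/W_{J}}BwP_{J}/P_{J}$ into affine spaces $\mathbb{A}^{l(w)}$ over $\mathbb{F}_{q}$ then gives $|(H/P_{J})(\mathbb{F}_{q})|=\sum_{w\in W_{H}/W_{J}}q^{l(w)}$. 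Next, for a parabolic $P$ of type $J$ with Levi decomposition $P=L\ltimes R_{u}(P)$, I would show that $n\in\operatorname{Lie}(P)$ is nilpotent if and only if its image in $\operatorname{Lie}(L)=\operatorname{Lie}(P)/\operatorname{Lie}(R_{u}(P))$ is nilpotent, using that $\operatorname{Lie}(R_{u}(P))$ is an ideal of nilpotent elements and, for both implications, a faithful representation of $P$ equipped with a $P$-stable filtration on which $R_{u}(P)$ acts trivially. Hence the number of nilpotents in $\operatorname{Lie}(P)$ is $q^{\dim R_{u}(P)}\cdot|\mathcal{N}_{\operatorname{Lie}(L)}(\mathbb{F}_{q})|$, which by the Springer--Steinberg count $|\mathcal{N}_{\operatorname{Lie}(L)}(\mathbb{F}_{q})|=q^{\dim L-\rk L}$ equals $q^{\dim P-\rk P}$; multiplying by $|(H/P_{J})(\mathbb{F}_{q})|$ and using $\dim P_{J}=\rk H+|\Phi_{J}^{+}|+|\Phi_{H}^{+}|$, $\rk P_{J}=\rk H$ gives (i).

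For (ii) I would send $(n,P,Q)$ to the relative position of $(P,Q)$, so that $St_{H}(J_{1},J_{2})$ is partitioned into $\mathbb{F}_{q}$-defined locally closed pieces $St_{H}(J_{1},J_{2})_{w}$ indexed by $w\in D^{H}_{J_{1},J_{2}}$, and $|St_{H}(J_{1},J_{2})|=\sum_{w}|St_{H}(J_{1},J_{2})_{w}|$. Fixing $w$, choosing a representative $\dot w\in N_{H}(T)(\mathbb{F}_{q})$ (possible since $H$ is split), and putting $M:=P_{J_{1}}\cap{}^{\dot w}P_{J_{2}}$, a connected $\mathbb{F}_{q}$-subgroup, Lang's theorem shows that the $\mathbb{F}_{q}$-rational pairs in position $w$ form one $H(\mathbb{F}_{q})$-orbit with stabilizer $M(\mathbb{F}_{q})$, so $|St_{H}(J_{1},J_{2})_{w}|$ is $\tfrac{|H(\mathbb{F}_{q})|}{|M(\mathbb{F}_{q})|}$ times the number of nilpotents in $\operatorname{Lie}(M)=\operatorname{Lie}(P_{J_{1}})\cap\operatorname{Lie}({}^{\dot w}P_{J_{2}})$. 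Here the Weyl-group combinatorics enters: for $w$ minimal in its $(J_{1},J_{2})$-double coset, the standard fact that $W_{J_{1}}\cap{}^{w}W_{J_{2}}=W_{K}$ with $K=J_{1}\cap w\cdot J_{2}$, equivalently $\Phi_{J_{1}}^{+}\cap w\Phi_{J_{2}}^{+}=\Phi_{K}^{+}$, identifies the reductive part $L_{J_{1}}\cap{}^{\dot w}L_{J_{2}}$ of $M$ with the standard Levi $L_{K}$. Reading off root spaces, $\operatorname{Lie}(M)$ consists of $\mathfrak{t}$, the root spaces for $\Phi_{H}^{+}\cap w\Phi_{H}^{+}$, and those for $-\Phi_{K}^{+}$ (with $\Phi_{K}^{+}\subseteq\Phi_{H}^{+}\cap w\Phi_{H}^{+}$), so the argument of (i) gives that it has $q^{\dim M-\rk M}=q^{|\Phi_{H}^{+}\cap w\Phi_{H}^{+}|+|\Phi_{K}^{+}|}$ nilpotent elements. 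Substituting $|H(\mathbb{F}_{q})|=q^{|\Phi_{H}^{+}|}(q-1)^{\rk H}\sum_{v\in W_{H}}q^{l(v)}$ and $|M(\mathbb{F}_{q})|=q^{|\Phi_{H}^{+}\cap w\Phi_{H}^{+}|}(q-1)^{\rk H}\sum_{v\in W_{K}}q^{l(v)}$ (from $\dim R_{u}(M)=|\Phi_{H}^{+}\cap w\Phi_{H}^{+}|-|\Phi_{K}^{+}|$ and $\dim L_{K}=\rk H+2|\Phi_{K}^{+}|$), the powers of $q-1$ and the $|\Phi_{H}^{+}\cap w\Phi_{H}^{+}|$-contributions cancel, leaving
\[
|St_{H}(J_{1},J_{2})_{w}|=q^{|\Phi_{H}^{+}|+|\Phi_{K}^{+}|}\,\frac{\sum_{v\in W_{H}}q^{l(v)}}{\sum_{v\in W_{K}}q^{l(v)}}=q^{|\Phi_{H}^{+}|+|\Phi_{K}^{+}|}\sum_{v\in W_{H}/W_{K}}q^{l(v)},
\]
which is $|Sp_{H}(K)|$ by (i). Summing over $w\in D^{H}_{J_{1},J_{2}}$ yields $|St_{H}(J_{1},J_{2})|=\Delta_{H}([Sp_{H}])(J_{1},J_{2})$.

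I expect the real obstacles to be the two non-formal inputs rather than the bookkeeping. First, one needs that the nilpotent cone of a reductive Lie algebra over $\mathbb{F}_{q}$ has $q^{\dim-\rk}$ rational points in every characteristic, together with the compatibility --- delicate in small characteristic --- between the representation-theoretic notion of nilpotence used in the paper and membership in the geometric nilpotent cone; this is what makes all the fiberwise counts clean. Second, one needs the structural identification of $L_{J_{1}}\cap{}^{\dot w}L_{J_{2}}$ with the standard Levi $L_{K}$ for $K=J_{1}\cap w\cdot J_{2}$, which is exactly what lets each stratum of $St_{H}(J_{1},J_{2})$ be recognized as $Sp_{H}(K)$. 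Once these are granted, the Bruhat and Lang's-theorem bookkeeping and the cancellation of $q$-powers are routine.
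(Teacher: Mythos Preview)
Your proposal is correct and follows essentially the same route as the paper: both fiber $Sp_H(J)$ over $H/P_J$ and count nilpotents in $\operatorname{Lie}(P_J)$ via the reduction to the Levi quotient and the Springer--Steinberg formula $|\mathcal{N}|=q^{\dim-\rk}$, and both stratify $St_H(J_1,J_2)$ by relative position, identify $L_{J_1}\cap{}^{w}L_{J_2}$ with $L_{J_1\cap w\cdot J_2}$ for minimal $w$, and compare the resulting count with $|Sp_H(J_1\cap w\cdot J_2)|$. The only cosmetic difference is that the paper packages the nilpotent count once as a proposition for arbitrary connected groups and then compares via the compact formula $|Sp_H(J)|=|H|\cdot|\mathcal{N}(\operatorname{Lie}(L_J))|/|L_J|$, whereas you expand the $q$-polynomials and cancel explicitly; the two non-formal inputs you flag are exactly the ones the paper imports from Springer and from Carter/Digne--Michel.
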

\begin{remark}
Using the equality $|\Phi_{J}^{+}|+|\Phi^{+}|=\dim(P_{J})-\rk(P_{J})$, the formula in $(i)$ can also be written as:
\[
|Sp(J)|=q^{\dim(P_{J})-\rk(P_{J})}\sum_{w\in W/W_J}q^{l(w)}
\]
\end{remark}
We give the proof of Theorem \ref{coproduct} in Section \ref{Steinberg Varieties}.
\subsection{Stratification of triples}\label{main theorem}
\begin{definition}
Fix a $k$-rational point $x$ of $\mathbb{P}^1$. For $J\subset\Pi$, a $\emph{parabolic structure}$ on a principal $G$-bundle $\cE$ over $\mathbb{P}^1$ at $x$ of type $J$ is a choice of a $k$-rational point $\mathfrak{p}_{x}$ of $\cE_{x}/P_{J}$ where $\cE_{x}$ is the fiber of $\cE$ at $x$.
\end{definition}
Let $\mu\in X_{+}(T)$ and $J_{0},J_{\infty}\subset\Pi$, define $\mathcal{T}rip_{\mu}(J_{0},J_{\infty})$ to be the scheme parameterizing triples $(\mathfrak{p}_{0},\mathfrak{p}_{\infty},\Psi)$ such that $\Psi$ is a nilpotent section of ad$(\cE_{\mu})$, $\mathfrak{p}_{0}$ (resp. $\mathfrak{p}_{\infty}$) is a parabolic structure at $0$ (resp. $\infty$) of type $J_{0}$ (resp. $J_{\infty}$) and $\Psi_{0}\in \mathfrak{p}_{0}$,  $\Psi_{\infty}\in \mathfrak{p}_{\infty}$. We note that $\mathcal{T}rip_{\mu}(J_{0},J_{\infty})$ is a scheme because it is the closed subscheme of $\cE_{0}/P_{J_{0}}\times\cE_{\infty}/P_{J_{\infty}}\times H^{0}(\mathbb{P}^{1},\text{ad}(\cE_{\mu}))$ given by three closed conditions which are: $\Psi$ is nilpotent, $\Psi_{0}\in \mathfrak{p}_{0}$,  $\Psi_{\infty}\in \mathfrak{p}_{\infty}$.

Now let us explain the meaning of $\Psi_{x}\in\mathfrak{p}_{x}$, $x=0,\infty$ in the definition of $\mathcal{T}rip_{\mu}(J_{0},J_{\infty})$. 
For $x=0,\infty$, we view $(\cE_{\mu})_{x}$ as a principal $G$-bundle over the point $x$ and we let Aut$((\cE_{\mu})_x)$ denote the $k$-group scheme 
whose $R$-valued points are the principal $G\times Spec(R)$-bundle automorphisms of $(\cE_{\mu})_{x}\times Spec(R)$. 
Since $\cE_{\mu}$ is a pushforward of the $\mathbb{G}_{m}$-bundle $\mathcal{O}(1)^{\times}$, $(\cE_{\mu})_{x}$ is a trivial principal $G$-bundle over the point $x$ and therefore $\aut((\cE_{\mu})_{x})$ can be non-canonically identified with $G$. Now, $\aut((\cE_{\mu})_{x})$ acts on $(\cE_{\mu})_{x}/P_{J_{x}}$ and the stabilizer of $\mathfrak{p}_{x}$ is a parabolic subgroup of Aut$((\cE_{\mu})_x)$.
When we write $\Psi_{x}\in\mathfrak{p}_{x}$, we view $\mathfrak{p}_{x}$ as the Lie algebra of this stabilizer. This is a parabolic subalgebra of Lie(Aut$((\cE_{\mu})_{x}))=\text{ad}(\cE_{\mu})_{x}$.

Next, we describe a $\gm$-action on $\mathcal{T}rip_{\mu}(J_{0},J_{\infty})$ coming from the torsor structure on $\cE_{\mu}$. First note that $\mathbb{G}_{m}$ acts on $\cE_{\mu}=(\mathcal{O}(1)^{\times}\times G)/\mathbb{G}_{m}$ by acting on the first component from the left. This gives a $\mathbb{G}_{m}$-action on the parabolic structures and on ad$(\cE_{\mu})$, which gives a $\mathbb{G}_{m}$-action on $H^{0}(\mathbb{P}^{1},\text{ad}(\cE_{\mu}))$. On combining these actions, we get a $\mathbb{G}_{m}$-action:
\begin{equation}\label{action on trip}
    \gm\curvearrowright\mathcal{T}rip_{\mu}(J_{0},J_{\infty}).
\end{equation}
In this paper, in the case when $k=\mathbb{F}_{q}$ we want to count the number of $\mathbb{F}_{q}$-points of $\mathcal{T}rip_{\mu}(J_{0},J_{\infty})$ for each $\mu\in X_{+}(T),J_{0},J_{\infty}\subset\Pi$. For this, we would like to apply the Bialynicki--Birula decomposition to $\mathcal{T}rip_{\mu}(J_{0},J_{\infty})$ with respect to the $\gm$-action \eqref{action on trip}. Note that it is not immediate in this case because $\mathcal{T}rip_{\mu}(J_{0},J_{\infty})$ is neither smooth nor projective in general but nevertheless we will prove below Theorem \ref{trip}, which allows to reduce counting $|\mathcal{T}rip_{\mu}(J_{0},J_{\infty})|$ to counting points of the generalized Steinberg varieties.
\begin{notation}
Let $X$ be a scheme over $k$
and let $H$ be an affine algebraic group over $k$
acting on $X$. We will denote the fixed point locus of this action by $X^H$.
\end{notation}
\begin{theorem}\label{trip}
Let $\mathbb{G}_{m}$ act on $\mathcal{T}rip_{\mu}(J_{0},J_{\infty})$ as in $\eqref{action on trip}$. Then there exists a stratification of $\mathcal{T}rip_{\mu}(J_{0},J_{\infty})$ by locally closed subsets as:
\[
\mathcal{T}rip_{\mu}(J_{0},J_{\infty})=\displaystyle\bigsqcup_{\substack{w\in W_{\Pi_{\mu}}\backslash W/W_{J_{0}}
\\
w'\in W_{\Pi_{\mu}}\backslash W/W_{J_{\infty}}}}\mathcal{T}rip_{\mu}(J_{0},J_{\infty})_{w,w'}^{+}
\]
and a decomposition of $\mathcal{T}rip_{\mu}(J_{0},J_{\infty})^{\mathbb G_{m}}$ as:
\[
\mathcal{T}rip_{\mu}(J_{0},J_{\infty})^{\mathbb G_{m}}=\displaystyle\bigsqcup_{\substack{w\in W_{\Pi_{\mu}}\backslash W/W_{J_{0}}
\\
w'\in W_{\Pi_{\mu}}\backslash W/W_{J_{\infty}}}} \mathcal{T}rip_{\mu}(J_{0},J_{\infty})_{w,w'}^{\gm},
\]
where $\mathcal{T}rip_{\mu}(J_{0},J_{\infty})_{w,w'}^{\gm}$ are the connected components of $\mathcal{T}rip_{\mu}(J_{0},J_{\infty})^{\mathbb G_{m}}$ 
with morphisms
\[
\mathcal{T}rip_{\mu}(J_{0},J_{\infty})_{w,w'}^{+}\rightarrow\mathcal{T}rip_{\mu}(J_{0},J_{\infty})_{w,w'}^{\gm},
\]
which are given by the limit map as $t\rightarrow 0$ and are
affine fibrations for $w\in W_{\Pi_{\mu}}\backslash W/W_{J_{0}}, w'\in W_{\Pi_{\mu}}\backslash W/W_{J_{\infty}}$ of relative dimensions $\dim(\aut(\cE_{\mu}))-\dim(L_{\mu})$. Moreover, the schemes $\mathcal{T}rip_{\mu}(J_{0},J_{\infty})_{w,w'}^{\gm}$ are isomorphic to the generalized Steinberg varieties $St_{L_{\mu}}(\Pi_{\mu}\cap w\cdot J_{0},\Pi_{\mu}\cap w'\cdot J_{\infty})$, $w\in D^{G}_{\Pi_{\mu},J_{0}}, w'\in D^{G}_{\Pi_{\mu},J_{\infty}}$. 
\end{theorem}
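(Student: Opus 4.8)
\emph{Sketch of the strategy.} The plan is to realize $\mathcal{T}rip_{\mu}(J_{0},J_{\infty})$ as a $\gm$-stable closed subscheme of a smooth ambient scheme whose $\gm$-action we fully understand, run the Bialynicki--Birula machinery there, and then verify that the three closed conditions cutting out $\mathcal{T}rip_{\mu}(J_{0},J_{\infty})$ are compatible with the resulting cells.

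First I would embed $\mathcal{T}rip_{\mu}(J_{0},J_{\infty})\hookrightarrow\mathcal{X}\times V$, where $\mathcal{X}:=(\cE_{\mu})_{0}/P_{J_{0}}\times(\cE_{\mu})_{\infty}/P_{J_{\infty}}$ is smooth projective and $V:=H^{0}(\mathbb{P}^{1},\text{ad}(\cE_{\mu}))=\lie(\aut(\cE_{\mu}))$. The key point is that the action \eqref{action on trip} is induced by the cocharacter $\mu\colon\gm\to Z(L_{\mu})\subseteq\aut(\cE_{\mu})$ (constant central gauge transformations): under Lang trivializations of $\cE_{\mu}$ at $0$ and at $\infty$ it becomes left translation by $\mu(t)$ on each partial flag variety and the adjoint action $\mathrm{Ad}_{\mu(t)}$ on $V$. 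I would then use two structural facts. (i) As $\mu$ is antidominant, $\aut(\cE_{\mu})$ has a semidirect decomposition $\aut(\cE_{\mu})=L_{\mu}\ltimes U$ with $\mu$ central in $L_{\mu}$ and $\mathrm{Ad}_{\mu(t)}$ contracting $U$ to $1$ as $t\to 0$; correspondingly $V=V^{0}\oplus V^{>0}$ with $V^{0}=\lie(L_{\mu})$ (the constant sections valued in the weight $0$ space $\mathfrak{g}_{0}$ of $\mathrm{Ad}\circ\mu$) and $V^{>0}=\lie(U)$, so $V$ carries no negative weights and $\dim V^{>0}=\dim\aut(\cE_{\mu})-\dim L_{\mu}$. (ii) By the usual Bialynicki--Birula analysis of a partial flag variety under a cocharacter, $\mathcal{X}^{\gm}$ has connected components $F_{w,w'}$, indexed by pairs $(w,w')$ with $\weyl$, each a product of two partial flag varieties of $L_{\mu}$, and the attracting strata $\mathcal{X}^{+}_{w,w'}\to F_{w,w'}$ are affine bundles.

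Since $\mathcal{X}$ is projective and $V$ has only nonnegative weights, every point of $\mathcal{X}\times V$, hence of $\mathcal{T}rip_{\mu}(J_{0},J_{\infty})$, admits a limit as $t\to 0$, and this limit lies in $\mathcal{T}rip_{\mu}(J_{0},J_{\infty})$ since it is closed and $\gm$-stable. Putting $\mathcal{T}rip_{\mu}(J_{0},J_{\infty})^{+}_{w,w'}:=\mathcal{T}rip_{\mu}(J_{0},J_{\infty})\cap(\mathcal{X}^{+}_{w,w'}\times V)$ and $\mathcal{T}rip_{\mu}(J_{0},J_{\infty})^{\gm}_{w,w'}:=\mathcal{T}rip_{\mu}(J_{0},J_{\infty})\cap(F_{w,w'}\times V^{0})$ then gives the asserted stratification and the description of the fixed locus. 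For a fixed triple $(P_{0},P_{\infty},\Psi)$ the section $\Psi$ is constant, valued in $\mathfrak{g}_{0}=\lie(L_{\mu})$, and each $P_{x}$ is $\mu$-stable, so $\Psi\in\lie(P_{x})\cap\lie(L_{\mu})=\lie(P_{x}\cap L_{\mu})$; using that an element of $\lie(L_{\mu})$ is nilpotent in $\mathfrak{g}$ iff it is nilpotent in $\lie(L_{\mu})$, and that $P_{x}\mapsto P_{x}\cap L_{\mu}$ identifies the $x$-factor of $F_{w,w'}$ with the variety of parabolics of $L_{\mu}$ of the appropriate type, one reads off $\mathcal{T}rip_{\mu}(J_{0},J_{\infty})^{\gm}_{w,w'}\cong St_{L_{\mu}}(\Pi_{\mu}\cap w\cdot J_{0},\Pi_{\mu}\cap w'\cdot J_{\infty})$ for $w\in D^{G}_{\Pi_{\mu},J_{0}}$, $w'\in D^{G}_{\Pi_{\mu},J_{\infty}}$.

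The remaining, and hardest, point is that the limit map $\mathcal{T}rip_{\mu}(J_{0},J_{\infty})^{+}_{w,w'}\to\mathcal{T}rip_{\mu}(J_{0},J_{\infty})^{\gm}_{w,w'}$ is an affine fibration of relative dimension $\dim\aut(\cE_{\mu})-\dim L_{\mu}$; this is subtle because $\mathcal{T}rip_{\mu}(J_{0},J_{\infty})$ is neither smooth nor projective, so merely restricting the ambient Bialynicki--Birula affine bundle to it is not enough. The observation that unlocks it is a compatibility between nilpotency and the flow: any $\Psi$ with $\Psi_{0}\in\lie(P_{0})$ and $\Psi_{\infty}\in\lie(P_{\infty})$ is automatically a section of the sub-bundle of $\text{ad}(\cE_{\mu})$ corresponding to the parabolic subalgebra $\bigoplus_{n\geq 0}\mathfrak{g}_{n}$, whose reduction modulo the nilpotent radical is the constant section $\lim_{t\to 0}(t\cdot\Psi)$; hence $\Psi$ is nilpotent iff $\lim_{t\to 0}(t\cdot\Psi)$ is. Thus the nilpotency condition is vacuous along the attracting set of any point of $\mathcal{T}rip_{\mu}(J_{0},J_{\infty})$, which reduces the claim to the incidence scheme $\mathcal{W}:=\{(P_{0},P_{\infty},\Psi)\in\mathcal{X}\times V:\Psi_{0}\in\lie(P_{0}),\ \Psi_{\infty}\in\lie(P_{\infty})\}$: one shows $\mathcal{W}\cap(\mathcal{X}^{+}_{w,w'}\times V)\to\mathcal{W}\cap(F_{w,w'}\times V^{0})$ is a $\gm$-equivariant affine bundle and then pulls it back along $\mathcal{T}rip_{\mu}(J_{0},J_{\infty})^{\gm}_{w,w'}\hookrightarrow\mathcal{W}\cap(F_{w,w'}\times V^{0})$. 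For that last assertion I would work $\aut(\cE_{\mu})$-equivariantly: over an attracting cell the conditions $\Psi_{x}\in\lie(P_{x})$ become linear in the fibre coordinates after a $\gm$-equivariant change of trivialization adapted to the Levi decomposition $\aut(\cE_{\mu})=L_{\mu}\ltimes U$, which exhibits the cell as a sub-vector-bundle of the ambient affine bundle, and a direct count gives the relative dimension $\dim\lie(U)=\dim\aut(\cE_{\mu})-\dim L_{\mu}$. I expect this final equivariant local analysis --- pinning down the affine bundle structure over the (generally singular) base $St_{L_{\mu}}(\ldots)$ --- to be the technical heart of the proof.
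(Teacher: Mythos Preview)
Your strategy is sound and tracks the paper's closely: both arguments embed $\mathcal{T}rip$ in an ambient where the $\gm$-action is understood, apply Bialynicki--Birula on the flag-variety factors, exploit that sections of $\mathrm{ad}(\cE_\mu)$ carry only nonnegative $\mu$-weights, and use that nilpotency of $\Psi$ is equivalent to nilpotency of its $\mathfrak{g}^0$-component so that the nilpotent condition is transparent along attracting sets. Your identification of the fixed locus with the Steinberg varieties $St_{L_\mu}(\Pi_\mu\cap w\cdot J_0,\Pi_\mu\cap w'\cdot J_\infty)$ is exactly how the paper does it.

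The main difference is organizational, and it is precisely at the step you flag as the technical heart. The paper does not try to analyze the incidence conditions $\Psi_x\in\lie(P_x)$ inside $\mathcal{X}\times V$ directly. Instead it first uses the evaluation-at-$0,\infty$ map to split $\mathcal{T}rip\cong\cQuad\times W$, where $W=\ker(eval)$ is a $\gm$-representation with strictly positive weights, and $\cQuad\subset\mathcal{B}_{J_0}\times\mathcal{B}_{J_\infty}$ with $\mathcal{B}_J=\{(P,v):v\in\lie(P)\}$ the tautological sub-bundle of $G/P_J\times\mathfrak{g}$. Since $\mathcal{B}_J$ is an equivariant vector bundle over the smooth projective $G/P_J$, it has an equivariant smooth compactification (its projective completion), so Bialynicki--Birula applies to $\mathcal{B}_J$ \emph{on the nose}, and the strata of $\cQuad$ arise by pullback. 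The payoff is the dimension count: at a fixed point $(wP_Jw^{-1},0)$ one has $T(\mathcal{B}_J)\cong\mathfrak{g}/\lie(wP_Jw^{-1})\oplus\lie(wP_Jw^{-1})\cong\mathfrak{g}$ as $\gm$-representations, so the attracting cells of $\mathcal{B}_J$ have relative dimension $(\dim G-\dim L_\mu)/2$ \emph{independent of $w$}. This, together with $\dim W=\sum_{\langle\alpha,\mu\rangle>0}(\langle\alpha,\mu\rangle-1)$, immediately gives the uniform relative dimension $\dim\aut(\cE_\mu)-\dim L_\mu$.

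By contrast, in your setup the ambient attracting bundle $\mathcal{X}^+_{w,w'}\times V\to F_{w,w'}\times V^0$ has relative dimension that \emph{does} depend on $(w,w')$ (the positive part of the tangent space of $G/P_J$ at $wP_Jw^{-1}$ varies with $w$), and your ``direct count'' must show that the linear incidence conditions $\Psi_x\in\lie(P_x)$ cut it down by exactly the right amount to land on $\dim\aut(\cE_\mu)-\dim L_\mu$ for every $(w,w')$. This will eventually amount to rediscovering the cancellation $T(\mathcal{B}_J)\cong\mathfrak{g}$; the paper's packaging via $\mathcal{B}_J$ makes that cancellation automatic and sidesteps the local ``change of trivialization'' argument you propose. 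Your route is not wrong, but the evaluation-then-$\mathcal{B}_J$ factorization is what makes the affine fibration and its relative dimension come for free.
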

\hspace*{-5.5mm}
The proof of Theorem \ref{trip} will be given in Section \ref{formula}. 

Upto this point, the base field $k$ in Theorem \ref{trip} was arbitrary. Now let $k=\mathbb{F}_{q}$. For $\mu\in X_{+}(T)$, define
$
\pi_{\mu}:\mathbb{Z}[\mathcal{P}(\Pi_{\mu})]\rightarrow \mathbb{Z}[\mathcal{P}(\Pi)]
$
as:
\[
\pi_{\mu}(f)(J):=\sum_{w\in D^{G}_{\Pi_{\mu},J}} f(\Pi_{\mu}\cap w\cdot J),
\quad f\in\mathbb{Z}[\mathcal{P}(\Pi_{\mu})]
\] 
and define $[\mathcal{T}rip_\mu]:\mathcal{P}(\Pi)\times\mathcal{P}(\Pi)\rightarrow\mathbb{Z}$ as:
\[
[\mathcal{T}rip_\mu](J_{0},J_{\infty}):=|\mathcal{T}rip_{\mu}(J_{0},J_{\infty})|
\]
As an easy corollary of Theorem \ref{trip}, we get:
\begin{corollary}\label{final}
Keeping the above notations, we have:
\[
[\mathcal{T}rip_\mu]=q^{\dim(\aut(\cE_{\mu}))-\dim(L_{\mu})}(\pi_{\mu}\otimes\pi_{\mu})([St_{L_{\mu}}]).
\]
\end{corollary}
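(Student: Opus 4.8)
The plan is to deduce Corollary \ref{final} directly from Theorem \ref{trip} by counting $\mathbb{F}_q$-points stratum by stratum. First I would invoke the stratification
\[
\mathcal{T}rip_{\mu}(J_{0},J_{\infty})=\bigsqcup_{\weyl}\mathcal{T}rip_{\mu}(J_{0},J_{\infty})_{w,w'}^{+},
\]
which, being a stratification by locally closed subschemes defined over $\mathbb{F}_q$ (indexed by double cosets, each of which has a canonical minimal-length representative, hence is defined over $\mathbb{F}_q$), gives
\[
|\mathcal{T}rip_{\mu}(J_{0},J_{\infty})|=\sum_{\weyl}|\mathcal{T}rip_{\mu}(J_{0},J_{\infty})_{w,w'}^{+}|.
\]
Then, since the limit map $\mathcal{T}rip_{\mu}(J_{0},J_{\infty})_{w,w'}^{+}\rightarrow\mathcal{T}rip_{\mu}(J_{0},J_{\infty})_{w,w'}^{\gm}$ is an affine fibration of relative dimension $d:=\dim(\aut(\cE_{\mu}))-\dim(L_{\mu})$, it is in particular a Zariski-locally trivial $\mathbb{A}^d$-bundle (affine fibrations over a base are Zariski-locally trivial), so $|\mathcal{T}rip_{\mu}(J_{0},J_{\infty})_{w,w'}^{+}|=q^{d}\,|\mathcal{T}rip_{\mu}(J_{0},J_{\infty})_{w,w'}^{\gm}|$. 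Summing over the strata factors out $q^{d}$.

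Next I would rewrite the sum over the remaining fixed-point strata using the isomorphism from Theorem \ref{trip}. The key point is that the set of double cosets $W_{\Pi_{\mu}}\backslash W/W_{J_{0}}$ is in bijection with $D^{G}_{\Pi_{\mu},J_{0}}$ (and likewise for $J_\infty$), and Theorem \ref{trip} identifies, for $w\in D^{G}_{\Pi_{\mu},J_{0}}$, $w'\in D^{G}_{\Pi_{\mu},J_{\infty}}$,
\[
\mathcal{T}rip_{\mu}(J_{0},J_{\infty})_{w,w'}^{\gm}\cong St_{L_{\mu}}(\Pi_{\mu}\cap w\cdot J_{0},\,\Pi_{\mu}\cap w'\cdot J_{\infty}).
\]
Hence
\[
\sum_{w,w'}|\mathcal{T}rip_{\mu}(J_{0},J_{\infty})_{w,w'}^{\gm}|=\sum_{\substack{w\in D^{G}_{\Pi_{\mu},J_{0}}\\w'\in D^{G}_{\Pi_{\mu},J_{\infty}}}}|St_{L_{\mu}}(\Pi_{\mu}\cap w\cdot J_{0},\,\Pi_{\mu}\cap w'\cdot J_{\infty})|.
\]
Finally I would recognize the right-hand side as exactly the value at $(J_0,J_\infty)$ of $(\pi_{\mu}\otimes\pi_{\mu})([St_{L_{\mu}}])$: unravelling the definition of $\pi_\mu$, we have
\[
(\pi_{\mu}\otimes\pi_{\mu})([St_{L_{\mu}}])(J_0,J_\infty)=\sum_{w\in D^{G}_{\Pi_{\mu},J_0}}\sum_{w'\in D^{G}_{\Pi_{\mu},J_\infty}}[St_{L_{\mu}}](\Pi_{\mu}\cap w\cdot J_0,\,\Pi_{\mu}\cap w'\cdot J_\infty),
\]
and $[St_{L_{\mu}}](A,B)=|St_{L_{\mu}}(A,B)|$ by definition. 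Combining the displayed equalities yields $[\mathcal{T}rip_\mu](J_0,J_\infty)=q^{d}(\pi_{\mu}\otimes\pi_{\mu})([St_{L_{\mu}}])(J_0,J_\infty)$, which is the asserted identity of functions on $\mathcal{P}(\Pi)\times\mathcal{P}(\Pi)$.

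I expect the only genuine subtlety — and thus the step to be careful about — is the passage from ``affine fibration of relative dimension $d$'' to the point-count multiplier $q^{d}$: one must make sure the fibration is defined over $\mathbb{F}_q$ and is Zariski-locally trivial with fibre $\mathbb{A}^d_{\mathbb{F}_q}$ so that $|\text{total}|=q^d|\text{base}|$ on $\mathbb{F}_q$-points; this is standard but deserves a sentence. Everything else is bookkeeping: checking that all strata and the indexing double-coset sets are genuinely defined over $\mathbb{F}_q$ (so that taking $\mathbb{F}_q$-points commutes with the disjoint union), and matching the combinatorial definition of $\pi_\mu$ with the index set $D^{G}_{\Pi_\mu,J}$ appearing in Theorem \ref{trip}. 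Since Corollary \ref{final} is advertised as an ``easy corollary,'' the proof should be short — essentially the chain of equalities above.
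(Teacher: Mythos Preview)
Your proposal is correct and follows essentially the same route as the paper's own proof: sum over the strata from Theorem \ref{trip}, pull out the $q^{d}$ factor from the affine fibrations, replace the fixed-point components by the corresponding generalized Steinberg varieties, and identify the resulting double sum with $(\pi_{\mu}\otimes\pi_{\mu})([St_{L_{\mu}}])(J_0,J_\infty)$. The subtlety you flag about the point-count multiplier is handled automatically by the paper's Definition \ref{affine fibration}, which builds Zariski-local triviality over $\mathbb{F}_q$ into the notion of affine fibration.
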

\begin{proof}
Let $J_{0},J_{\infty}\subset\Pi$. From Theorem \ref{trip}, we have
\[
[\mathcal{T}rip_\mu](J_{0},J_{\infty})=\displaystyle\sum_{\substack{w\in W_{\Pi_{\mu}}\backslash W/W_{J_{0}}
\\
w'\in W_{\Pi_{\mu}}\backslash W/W_{J_{\infty}}}}|\mathcal{T}rip_{\mu}(J_{0},J_{\infty})_{w,w'}^{+}|=\displaystyle\sum_{\substack{w\in W_{\Pi_{\mu}}\backslash W/W_{J_{0}}
\\
w'\in W_{\Pi_{\mu}}\backslash W/W_{J_{\infty}}}}q^{\dim(\aut(\cE_{\mu}))-\dim(L_{\mu})}|\mathcal{T}rip_{\mu}(J_{0},J_{\infty})_{w,w'}^{\gm}|.
\]
Since the schemes $\mathcal{T}rip_{\mu}(J_{0},J_{\infty})_{w,w'}^{\gm}$ are isomorphic to the generalized Steinberg varieties $St_{L_{\mu}}(\Pi_{\mu}\cap w\cdot J_{0},\Pi_{\mu}\cap w'\cdot J_{\infty})$, $w\in D^{G}_{\Pi_{\mu},J_{0}}, w'\in D^{G}_{\Pi_{\mu},J_{\infty}}$
(see Theorem \ref{trip}), we have
\[
[\mathcal{T}rip_\mu](J_{0},J_{\infty})=q^{\dim(\aut(\cE_{\mu}))-\dim(L_{\mu})}\displaystyle\sum_{\substack{w\in D^{G}_{\Pi_{\mu},J_{0}}
\\
w'\in D^{G}_{\Pi_{\mu},J_{\infty}}}}|St_{L_{\mu}}(\Pi_{\mu}\cap w\cdot J_{0},\Pi_{\mu}\cap w'\cdot J_{\infty})|.
\]
Now the corollary follows from the definition of $\pi_{\mu}$.
\end{proof}
\begin{remark}\label{pi=delta}
\begin{enumerate}[(i)]
\item For deducing Corollary \ref{final} from Theorem \ref{trip}, it is crucial that all fibers of the morphism $\mathcal{T}rip_{\mu}(J_{0},J_{\infty})_{w,w'}^{+}\rightarrow\mathcal{T}rip_{\mu}(J_{0},J_{\infty})_{w,w'}^{\gm}$ have the same dimension.
   \item
   Notice that $\pi_{\mu}$ is an instance of $\Delta_{G}$. 
   More precisely, let $f\in\mathbb{Z}[\mathcal{P}(\Pi_{\mu})]$ and let $\Tilde{f}$ be any extension of $f$ to $\mathcal{P}(\Pi)$, i.e, $\Tilde{f}\in\mathbb{Z}[\mathcal{P}(\Pi)]$ and $\Tilde{f}_{|_{\mathcal{P}(\Pi_{\mu})}}=f$. Then we have $\pi_{\mu}(f)=\Delta_{G}(\Tilde{f})(\Pi_{\mu},\cdot)$.
\end{enumerate}
\end{remark}
More explicitly, we have the following corollary.
\begin{corollary}\label{explicit}
We have $|\mathcal{T}rip_\mu(J_{0},J_{\infty})|$ is equal to
\[
q^{|\Phi_{\Pi_{\mu}}^{+}|+\sum_{\langle\alpha,\mu\rangle>0}\big(\langle\alpha,\mu\rangle+1\big)}\displaystyle\sum_{\substack{w\in D^{G}_{\Pi_{\mu},J_{0}}
\\
w'\in D^{G}_{\Pi_{\mu},J_{\infty}}}}\sum_{w''\in D^{L_{\mu}}_{\Pi_{\mu}\cap w\cdot J_{0},\Pi_{\mu}\cap w'\cdot J_{\infty}}}q^{\big|\Phi_{\Pi_{\mu}\cap w\cdot J_{0}\cap w''\cdot(\Pi_{\mu}\cap w'\cdot J_{\infty})}^{+}\big|}A(\mu,w,w',w'';q),
\] 
where $\Phi_{\Pi_{\mu}\cap w\cdot J_{0}\cap w''\cdot(\Pi_{\mu}\cap w'\cdot J_{\infty})}$ is the root system of $L_{\Pi_{\mu}\cap w\cdot J_{0}\cap w''\cdot(\Pi_{\mu}\cap w'\cdot J_{\infty})}$ with respect to $T$ and 
\[
A(\mu,w,w',w'';q)=
\sum_{w'''\in D^{L_{\mu}}_{\emptyset,\Pi_{\mu}\cap w\cdot J_{0}\cap w''\cdot(\Pi_{\mu}\cap w'\cdot J_{\infty})}}q^{l(w''')}.
\]
In particular, we see that $|\mathcal{T}rip_\mu(J_{0},J_{\infty})|$ is a polynomial in $q$ with non-negative integer coefficients.
\end{corollary}
\begin{remark}
\begin{enumerate}[(i)]
    \item  Using the equality $|\Phi_{\Pi_{\mu}}^{+}|+\sum_{\langle\alpha,\mu\rangle>0}\big(\langle\alpha,\mu\rangle+1\big)=|\Phi|/2+\langle2\rho,\mu\rangle$, we see that $|\mathcal{T}rip_\mu(J_{0},J_{\infty})|$ is equal to
    \[
q^{|\Phi|/2+\langle2\rho,\mu\rangle}\displaystyle\sum_{\substack{w\in D^{G}_{\Pi_{\mu},J_{0}}
\\
w'\in D^{G}_{\Pi_{\mu},J_{\infty}}}}\sum_{w''\in D^{L_{\mu}}_{\Pi_{\mu}\cap w\cdot J_{0},\Pi_{\mu}\cap w'\cdot J_{\infty}}}q^{\big|\Phi_{\Pi_{\mu}\cap w\cdot J_{0}\cap w''\cdot(\Pi_{\mu}\cap w'\cdot J_{\infty})}^{+}\big|}A(\mu,w,w',w'';q),
\]
where $\rho$ is the half sum of positive roots.
\item When $\mu$ is generic, i.e, $\langle\alpha,\mu\rangle\neq 0$ $\forall\alpha\in\Phi$, the expression in the above corollary simplifies to
\[
q^{|\Phi|/2+\langle2\rho,\mu\rangle}\frac{|W|^{2}}{|W_{J_{0}}||W_{J_{\infty}}|}.
\]
\end{enumerate}
\end{remark}
\begin{example}
    Let $G=GL_{3}$ and let $B$ (resp. $T$) be the subgroup of $G$ consisting of upper triangular (resp. diagonal) matrices. Let $\Phi$ denote the set of roots of $G$ with respect to $T$ and let $\Pi$ denote the set of simple roots with respect to $(B,T)$. Under the standard identification, we have $X_{*}(T)=\mathbb{Z}^{3}$, $W=S_{3}$, $\mu=(a,b,c)$, $a\geq b\geq c$ and $\Pi=\{e_{1}-e_{2},e_{2}-e_{3}\}$. We look at the following choices in order to illustrate Corollary \ref{explicit}:
    \begin{enumerate}[(i)]
        \item $a>b=c$, $J_{0}=\{e_{1}-e_{2}\}$, $J_{\infty}=\{e_{1}-e_{2}\}$: $|\mathcal{T}rip_\mu(J_{0},J_{\infty})|=q^{2(a-c)+3}(4+5q)$.
        \item $a>b=c$, $J_{0}=\{e_{1}-e_{2}\}$, $J_{\infty}=\{e_{2}-e_{3}\}$: $|\mathcal{T}rip_\mu(J_{0},J_{\infty})|=q^{2(a-c)+3}(4+5q)$.
        \item $a>b=c$, $J_{0}=\Pi$, $J_{\infty}=\{e_{1}-e_{2}\}$: $|\mathcal{T}rip_\mu(J_{0},J_{\infty})|=q^{2(a-c)+3}(2q+1)$.
        \item $a=b=c$, $J_{0}=\{e_{1}-e_{2}\}$, $J_{\infty}=\{e_{1}-e_{2}\}$: $|\mathcal{T}rip_\mu(J_{0},J_{\infty})|=q^{3}(1+3q+3q^{2}+2q^{3})$.
        \item $a=b=c$, $J_{0}=\{e_{1}-e_{2}\}$, $J_{\infty}=\{e_{2}-e_{3}\}$: $|\mathcal{T}rip_\mu(J_{0},J_{\infty})|=q^{3}(1+3q+3q^{2}+2q^{3})$.
        \item $a=b=c$, $J_{0}=\Pi$, $J_{\infty}=\{e_{1}-e_{2}\}$: $|\mathcal{T}rip_\mu(J_{0},J_{\infty})|=q^{4}(1+q+q^{2})$.
    \end{enumerate}
\end{example}
To prove Corollary \ref{explicit}, we need the following result (see \cite[Proposition 5.2]{Ra}), which describes $\aut(\cE_{\mu})$ as a scheme:
\begin{fact}\label{aut of emu}
Let $\cE_{\mu}$ be as above. Then $\aut(\cE_{\mu})$ is isomorphic as a scheme to
\[
L_{\mu}\times \prod_{\alpha\in\Phi:\langle\alpha,\mu\rangle>0}H^{0}(\mathbb{P}^{1},\mathcal{O}(\langle\alpha,\mu\rangle)).
\]
\end{fact}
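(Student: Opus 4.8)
The plan is to realize $\aut(\cE_{\mu})$ as the scheme of global sections of the twisted adjoint group scheme and then to compute that scheme using the two-chart description of $\mathbb{P}^{1}$ together with the $\gm$-action on $G$ given by conjugation through $\mu$. First I would recall that $\aut(\cE_{\mu})$ is the $\mathbb{F}_{q}$-group scheme whose $R$-points are the $G$-bundle automorphisms of $\cE_{\mu}\times\mathrm{Spec}(R)$; it represents the functor $R\mapsto H^{0}\big(\mathbb{P}^{1}_{R},\mathrm{Ad}(\cE_{\mu})_{R}\big)$, where $\mathrm{Ad}(\cE_{\mu})=\mathcal{O}(1)^{\times}\times^{\gm}G$ for the conjugation action of $\gm$ on $G$ through $\mu$. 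Trivializing $\mathcal{O}(1)^{\times}$ over $U_{0}=\mathrm{Spec}\,\mathbb{F}_{q}[t]$ and $U_{\infty}=\mathrm{Spec}\,\mathbb{F}_{q}[t^{-1}]$ with transition function $t$, an $R$-point of $\aut(\cE_{\mu})$ is a pair $(g,g')$ with $g\in G(R[t])$, $g'\in G(R[t^{-1}])$ and $g'=\mu(t)\,g\,\mu(t)^{-1}$ over $R[t,t^{-1}]$. I would then record the relevant $\gm$-structure on $G$: the fixed subgroup of $\mathrm{Ad}\circ\mu$ is $L_{\mu}$, and $\mu$ is central in $L_{\mu}$ since $L_{\mu}$ centralizes $\mu(\gm)$; the decomposition \eqref{decomposition of g} shows the positive- and negative-weight spaces integrate to the unipotent radicals $V$ and $V^{-}$ of opposite parabolics $Q=L_{\mu}V$ and $Q^{-}=L_{\mu}V^{-}$, with $\lie(Q)=\mathfrak{g}^{0}\oplus\mathfrak{g}^{+}$, and $V$ is filtered by its descending central series with graded pieces $\mathfrak{g}_{n}=\bigoplus_{\langle\alpha,\mu\rangle=n}\mathfrak{g}_{\alpha}$, $n>0$.

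The heart of the argument is to show that every automorphism preserves the canonical reduction of $\cE_{\mu}$ to $Q$ coming from $\mu\colon\gm\to Z(L_{\mu})\subset Q$. In the two-chart picture this amounts to the following: writing $g$ generically in the open cell $V^{-}L_{\mu}V$ as $g=v^{-}\ell v$, the cocycle condition forces $\mu(t)\,v^{-}\,\mu(t)^{-1}$ to be regular on all of $\mathbb{P}^{1}$, hence a global section of a bundle all of whose line-bundle summands are $\mathcal{O}(n)$ with $n<0$; since $H^{0}(\mathbb{P}^{1},\mathcal{O}(n))=0$ for $n<0$, we get $v^{-}=1$, i.e.\ $g\in Q(R[t])$. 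I expect this to be the main obstacle, because $g$ need not lie in the open cell over all of $\mathbb{A}^{1}_{R}$ and the argument must be carried out $R$-functorially; the clean way around it is to observe that the reduction to $Q$ is the Harder--Narasimhan/Behrend reduction of $\cE_{\mu}$ (this uses that $\mathcal{O}(1)$ has positive degree, so that precisely the $\mathfrak{g}^{\ge 0}$-directions are the ones with sections), and such a canonical reduction is automorphism-invariant by construction.

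Granting this, $\aut(\cE_{\mu})$ equals the automorphism group of the $Q$-bundle underlying $\cE_{\mu}$, and projection to the Levi quotient yields an exact sequence
\[
1\longrightarrow\Gamma\big(\mathbb{P}^{1},\mathrm{Ad}(\cE_{\mu})_{V}\big)\longrightarrow\aut(\cE_{\mu})\longrightarrow\aut_{L_{\mu}}\big(\mathcal{E}'\big),
\]
where $\mathcal{E}'$ is the $L_{\mu}$-bundle induced from $\mathcal{O}(1)^{\times}$ by $\mu$ and $\mathrm{Ad}(\cE_{\mu})_{V}$ is the twist of $V$. For the quotient term: since $\mu$ is central in $L_{\mu}$, conjugation through $\mu$ acts trivially on $L_{\mu}$, so the twisted group scheme is the constant group scheme $L_{\mu}\times\mathbb{P}^{1}$, whose scheme of global sections over the connected proper $\mathbb{F}_{q}$-scheme $\mathbb{P}^{1}$ is $L_{\mu}$; in particular the last map is surjective and split. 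For the kernel: the central-series filtration of $V$ gives the twisted bundle a filtration with graded pieces $\bigoplus_{\langle\alpha,\mu\rangle=n}\mathcal{O}(n)$, $n>0$, and since $H^{1}(\mathbb{P}^{1},\mathcal{O}(n))=0$ for $n\ge 1$ the successive extensions of the section groups split, so that as a scheme $\Gamma\big(\mathbb{P}^{1},\mathrm{Ad}(\cE_{\mu})_{V}\big)\cong\prod_{\alpha:\langle\alpha,\mu\rangle>0}H^{0}\big(\mathbb{P}^{1},\mathcal{O}(\langle\alpha,\mu\rangle)\big)$.

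Combining the split exact sequence with these two identifications gives the asserted isomorphism of schemes
\[
\aut(\cE_{\mu})\;\cong\;L_{\mu}\times\prod_{\alpha\in\Phi:\,\langle\alpha,\mu\rangle>0}H^{0}\big(\mathbb{P}^{1},\mathcal{O}(\langle\alpha,\mu\rangle)\big).
\]
Throughout I would be careful that the splittings are only claimed at the level of the underlying schemes (functors to sets), not of group schemes — the multiplication on $\aut(\cE_{\mu})$ mixes the factors via the adjoint action — and that this scheme-level splitting is exactly what the vanishing of the relevant first cohomology groups $H^{1}(\mathbb{P}^{1},\mathcal{O}(n))$ for $n\ge 0$ provides. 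A sanity check is that this recovers $\dim\aut(\cE_{\mu})-\dim L_{\mu}=\sum_{\langle\alpha,\mu\rangle>0}\big(\langle\alpha,\mu\rangle+1\big)$, consistent with the relative dimension computed in Lemma \ref{quad}.
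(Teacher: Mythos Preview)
The paper does not prove this statement; it is recorded as a \emph{Fact} and attributed to \cite[Proposition~5.2]{Ra} without argument, so there is no in-paper proof to compare against. Your outline is essentially the standard proof and is correct: identify $\aut(\cE_\mu)$ with global sections of the twisted group scheme $\mathcal{O}(1)^\times\times^{\gm}G$, show that every section lands in the parabolic $Q=L_\mu V$ determined by $\mu$ (your use of the Harder--Narasimhan/Behrend canonical reduction is a clean way to do this functorially in $R$), and then split off the Levi and unipotent parts.

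One simplification for the kernel: instead of running the descending-central-series argument and appealing to $H^1(\mathbb{P}^1,\mathcal{O}(n))=0$ to split the successive extensions, you can observe that the product decomposition $V\cong\prod_{\langle\alpha,\mu\rangle>0}U_\alpha$ into root groups is already $T$-equivariant (hence $\gm$-equivariant via $\mu$), so twisting gives a scheme isomorphism $\mathrm{Ad}(\cE_\mu)_V\cong\prod_{\langle\alpha,\mu\rangle>0}\mathcal{O}(\langle\alpha,\mu\rangle)$ over $\mathbb{P}^1$ directly; taking global sections then yields the product on the nose. Your dimension check is exactly how the paper uses this fact in Lemma~\ref{fiber}.
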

\begin{proof}(of Corollary \ref{explicit})
By Theorem \ref{steinberg}$(ii)$ and Corollary \ref{final}, we get
\begin{equation}\label{last page}
[\mathcal{T}rip_\mu](J_{0},J_{\infty})
=
q^{\dim(\aut(\cE_{\mu}))-\dim(L_{\mu})}\displaystyle\sum_{\substack{w\in D^{G}_{\Pi_{\mu},J_{0}}
\\
w'\in D^{G}_{\Pi_{\mu},J_{\infty}}}}\Delta_{L_{\mu}}([Sp_{L_{\mu}}])(\Pi_{\mu}\cap w\cdot J_{0},\Pi_{\mu}\cap w'\cdot J_{\infty}).
\end{equation}
Using the definition of $\Delta_{L_{\mu}}$, $[\mathcal{T}rip_\mu](J_{0},J_{\infty})$ is equal to
\[
q^{\dim(\aut(\cE_{\mu}))-\dim(L_{\mu})}\displaystyle\sum_{\substack{w\in D^{G}_{\Pi_{\mu},J_{0}}
\\
w'\in D^{G}_{\Pi_{\mu},J_{\infty}}}}
\sum_{w''\in D^{L_{\mu}}_{\Pi_{\mu}\cap w\cdot J_{0},\Pi_{\mu}\cap w'\cdot J_{\infty}}}
|Sp_{L_{\mu}}(\Pi_{\mu}\cap w\cdot J_{0}\cap w''\cdot(\Pi_{\mu}\cap w'\cdot J_{\infty}))|.
\]
Now the corollary follows from Theorem \ref{steinberg}$(i)$ and Fact \ref{aut of emu}.
\end{proof}
It follows from definitions that $\mathcal{T}rip_0(J_0,J_\infty)=St_G(J_0, J_\infty)$ and $\mathcal{T}rip_0(\Pi,\Pi)=\mathcal{N}(\mathfrak{g})$, the nilpotent cone of $\mathfrak{g}$. In particular, we see that even in the trivial case $\mu=0, J_{0}=J_{\infty}=\Pi$, $\mathcal{T}rip_{\mu}(J_{0},J_{\infty})$ is neither smooth nor projective. 

We note the following corollary.
\begin{corollary}\label{contribution trivial bundle}
Assume that $\mu\in X_{+}(T)$ is a central cocharacter. Then $[\mathcal{T}rip_{\mu}]=[St_{G}]$.
\end{corollary}
\begin{proof}
It follows from Corollary \ref{explicit} that $[\mathcal{T}rip_{\mu}]=[\mathcal{T}rip_{0}]$.
\end{proof}
\section{Generalized Steinberg Varieties}\label{Steinberg Varieties}
In this section, we give a proof of Theorem \ref{steinberg}. Recall that for any scheme $X$ over $\mathbb{F}_{q}$, we denote the number of $\mathbb{F}_{q}$-rational points of $X$ by $|X|$. 

\begin{notation}
Let $M$ be an algebraic group over $\mathbb{F}_{q}$ and let $\mathfrak{m}$ be the associated Lie algebra. We will denote the nilpotent cone of $\mathfrak{m}$ by $\mathcal{N}(\mathfrak{m})$. 
\end{notation}
\subsection{Proof of Theorem \ref{steinberg}$(i)$}
Let $H$ be a split reductive group over $\mathbb{F}_{q}$ with a split maximal torus $T_{H}$ and let $B_{H}$ be a Borel $\mathbb{F}_{q}$-subgroup containing $T_H$. Let $\Pi_{H}\subset X^{*}(T_{H})$ denote the corresponding set of simple roots of $H$. 
Let $W_H$ denote the Weyl group of $H$ relative to $T_H$. 
For any $J\subset \Pi_H$, let $P_J$ be the corresponding standard parabolic $\mathbb{F}_{q}$-subgroup of $H$. Let $L_{J}$ and $U_{J}$ be the Levi factor and the unipotent radical of $P_{J}$, respectively and let $W_J$ be the corresponding subgroup of $W_H$. 

The number of points of the generalized Springer variety of $H$ corresponding to $J\subset\Pi_H$ is given by
\begin{equation}\label{intermediate}
|Sp_{H}(J)|=\frac{|H|}{|P_{J}|}|\mathcal{N}(\text{Lie}(P_J)|=\frac{|H|}{|P_{J}|}q^{\text{dim}(P_{J})-\rk(P_{J})},
\end{equation}
where the first equality holds because the normalizer of $P_J$ in $H(\mathbb{F}_{q})$ is $P_{J}(\mathbb{F}_{q})$ and the fact that if $P$ is a parabolic subgroup of $G$ conjugate over $\mathbb{F}_{q}$ to $P_{J}$, then $\mathcal{N}(\lie(P))\cong\mathcal{N}(\lie(P_{J}))$. The second equality follows from $|\mathcal{N}(\text{Lie}(P_J))|=q^{\dim(P_{J})-\rk(P_{J})}$.

Since $H/P_{J}$ has a stratification by locally closed subsets as $\bigsqcup_{{w\in W_{H}/W_J}}\mathbb{A}^{l(w)}$ (see \cite[Proposition 3.16]{BT}), where $l(w)$ represents the minimal length of the elements in $wW_J$, using $|H/P_{J}|=|H|/|P_{J}|$ we get
\[
|Sp_{H}(J)|=q^{|\Phi_{J}^{+}|+|\Phi_{H}^{+}|}\sum_{w\in W_{H}/W_J}q^{l(w)}.
\]
This finishes the proof of part ($i$) of Theorem \ref{steinberg}.
\subsection{Proof of Theorem \ref{steinberg}$(ii)$}
In the proof of part $(ii)$ of Theorem \ref{steinberg}, we will need another formula for $|Sp_{H}(J)|$, which we now give. Let $U_J$ denote the unipotent radical of $P_J$. Then, we have $P_J\cong L_J\times U_J$ as schemes over $\mathbb{F}_{q}$ and $|U_J|=q^{\text{dim}(U_J)}$, (see \cite[Corollary 14.2.7]{Spr}). Using these two facts, $\eqref{intermediate}$ gives
\begin{equation}\label{for GLn}
|Sp_{H}(J)|=\frac{|H|}{|L_{J}|} q^{\text{dim}(L_{J})-\rk(L_{J})}.
\end{equation}
Now $|\mathcal{N}(\text{Lie}(L_J))|=q^{\dim(L_{J})-\rk(L_{J})}$ gives
\begin{equation}\label{levi1}
    |Sp_{H}(J)|=|H| \frac{|\mathcal{N}(\text{Lie}(L_J))|}{|L_J|}.
\end{equation}
For any $J_{i}\subset \Pi_H$, let $P_i:=P_{J_{i}}$ be the corresponding standard parabolic $\mathbb{F}_{q}$-subgroup of $H$, $i=1,2$. Let $L_{i}:=L_{J_{i}}$ and $U_{i}:=U_{J_{i}}$ be the Levi factor and the unipotent radical of $P_{i}$ respectively, and let $W_i:=W_{J_{i}}$ be corresponding subgroup of $W_H$, $i=1,2$. Consider the natural action of $H(\mathbb{F}_{q})$ on $St_{H}(J_1,J_2)$. Since the normalizer of $P_{1}$ in $H(\mathbb{F}_{q})$ is $P_{1}(\mathbb{F}_{q})$, the number of points of $St_{H}(J_1,J_2)$ is given by
\[
|St_{H}(J_1,J_2)|=\frac{|H|}{|P_{1}|}\sum_{h\in H(\mathbb{F}_{q})/P_2(\mathbb{F}_{q})}|\mathcal{N}(\text{Lie}(P_1\cap h\cdot P_2))|
=\frac{|H|}{|P_{1}|}|P_1|\sum_{h\in P_{1}(\mathbb{F}_{q})\backslash H(\mathbb{F}_{q})/P_2(\mathbb{F}_{q})}\frac{|\mathcal{N}(\text{Lie}(P_1\cap h\cdot P_2))|}{|P_{1}\cap h\cdot P_2|},
\]
where we have also used $|H/P_{1}|=|H|/|P_{1}|$ for the first equality. Now using $P_{1}(\mathbb{F}_{q})\backslash H(\mathbb{F}_{q})/P_2(\mathbb{F}_{q})\cong W_{1}\backslash W_{H}/W_2$ (see \cite[Theorem 65.21]{CR} , \cite[Theorem 21.91]{Mil1} and use the well-known fact that $H(\mathbb{F}_{q})$ is a finite group with a $BN$-pair \cite[Proposition 21.75]{Mil1} for $B=B_{H}(\mathbb{F}_{q}), N=N_{T_{H}}(\mathbb{F}_{q})$, where $N_{T_{H}}$ is the normalizer of $T_H$ in $H$), we have
\[
|St_{H}(J_1,J_2)|
=
|H|\sum_{w\in W_{1}\backslash W_{H}/W_{2}}\frac{|\mathcal{N}(\text{Lie}(P_1\cap w\cdot P_2))|}{|P_{1}\cap w\cdot P_2|}=|H|\sum_{w\in W_{1}\backslash W_{H}/W_2}\frac{q^{\dim(P_1\cap w\cdot P_2)-\rk(P_1\cap w\cdot P_2)}}{|P_{1}\cap w\cdot P_2|},
\]
where the second equality follows from $|\mathcal{N}(\text{Lie}(P_{1}\cap w\cdot P_{2}))|=q^{\dim(P_{1}\cap w\cdot P_{2})-\rk(P_{1}\cap w\cdot P_{2})}$.

Next, we have the following decomposition (the statement is easily reduced to $\overline{\mathbb{F}}_{q}$ in which case it is given by \cite[Proposition 2.15]{DM}):
\begin{equation}\label{decomposition of intersection}
P_{1}\cap w\cdot P_{2}=(L_{1}\cap w\cdot L_{2})(L_{1}\cap w\cdot U_{2})(U_{1}\cap w\cdot L_{2})(U_{1}\cap w\cdot U_{2})
\end{equation}
which is a direct product of varieties over $\mathbb{F}_{q}$. Now using the fact that for any unipotent algebraic group $U$ over $\mathbb{F}_{q}$, we have $|U|=q^{\text{dim}(U)}$ (see \cite[Corollary 14.2.7]{Spr}), we obtain
\[
|St_{H}(J_1,J_2)|=|H|\sum_{w\in W_{1}\backslash W_{H}/W_2}\frac{q^{\dim(L_1\cap w\cdot L_2)-\rk(L_1\cap w\cdot L_2)}}{|L_{1}\cap w\cdot L_2|}=|H|\sum_{w\in W_{1}\backslash W_{H}/W_2}\frac{|\mathcal{N}(\text{Lie}(L_{1}\cap w\cdot L_{2}))|}{|L_{1}\cap w\cdot L_2|}.
\]
where we use $|\mathcal{N}(\text{Lie}(L_{1}\cap w\cdot L_{2}))|=q^{\dim(L_{1}\cap w\cdot L_{2})-\rk(L_{1}\cap w\cdot L_{2})}$ for the second equality. 
Recall $D^{H}_{J_{1},J_{2}}$ from Section \ref{coproduct} and let $w\in D^{H}_{J_{1},J_{2}}$. In this case, we also have the following decomposition (the statement is easily reduced to $\overline{\mathbb{F}}_{q}$ in which it is given by \cite[Theorem 2.8.7]{Car}):
\begin{equation}\label{another decomposition of intersection}
    P_{1}\cap w\cdot P_{2}=(L_{J_{1}\cap w\cdot J_{2}})(L_{1}\cap w\cdot U_{2})(U_{1}\cap w\cdot L_{2})(U_{1}\cap w\cdot U_{2})
\end{equation}
By \eqref{decomposition of intersection}, \eqref{another decomposition of intersection} and the fact that $L_{J_{1}\cap w\cdot J_2}\subset L_{1}\cap w\cdot L_{2}$, we get $L_{J_{1}\cap w\cdot J_2}=L_{1}\cap w\cdot L_{2}$, which gives
\begin{equation}\label{steinberglevi}
    |St_{H}(J_1,J_2)|=|H|\sum_{w\in D^{H}_{J_{1},J_{2}}}\frac{|\mathcal{N}(\text{Lie}(L_{J_{1}\cap w\cdot J_{2}}))|}{|L_{J_{1}\cap w\cdot J_2}|}.
\end{equation}
Recalling that $\Delta_{H}$ is given by
\[
\Delta_{H}(f)(J_1,J_2)=\sum_{w\in D^{H}_{J_{1},J_{2}}}f(J_{1}\cap w\cdot J_{2}),
\]
we get from $\eqref{levi1}$ and $\eqref{steinberglevi}$ that
\[
\Delta_{H}([Sp_{H}])=[St_{H}].
\]
This finishes the proof of part ($ii$) of Theorem \ref{steinberg}.
\qed
\subsection{More on coproduct} 
In this section, we would like to prove a few properties of $\Delta_H$ that are of independent interest and will be used later in Section \ref{case of gln} in the case of $GL_n$. First we need some definitions.
\begin{definition}
Let $J_{1}, J_{2} \subset \Pi_H$, we say $J_{1}$ and $J_{2}$ are \emph{associates} whenever $\Phi_{J_{2}}=w\cdot \Phi_{J_{1}}$ for some $w\in W_H$. This gives an equivalence relation on $\mathcal{P}(\Pi_{H})$, which we denote by $\sim_H$. Let $f\in\mathbb{Z}[\mathcal{P}(\Pi_{H})]$, we say $f$ is \emph{associate invariant} if $f(J_{1})=f(J_{2})$ whenever $J_{1}$ and $J_{2}$ are associates. We say that a function of two variables is \emph{associate invariant} if it is associate invariant in each variable.
\end{definition}
Let $\mathcal{O}$ be an equivalence class of $\sim_{H}$.
Let $\delta_{\mathcal{O}}\in\mathbb{Z}[\mathcal{P}(\Pi_{H})]$ be the function on $\mathcal{P}(\Pi_{H})$ that takes the value $1$ on $J$ if $J\in\mathcal{O}$ and $0$ otherwise. Let us fix a representative $J_{\mathcal{O}}$ in each equivalence class $\mathcal{O}$. 
The following lemma states that $\Delta_{H}$ preserves associate invariant functions.
\begin{lemma}\label{associateinvariantdelta}
We have
\begin{equation}\label{lhsrhs}
\Delta_{H}(\delta_{\mathcal{O}})=\sum_{(\mathcal{O}_{1},\mathcal{O}_{2})\in\big(\mathcal{P}(\Pi_{H})/\sim\big)\times\big(\mathcal{P}(\Pi_{H})/\sim\big)}n_{\mathcal{O}}^{\mathcal{O}_{1},\mathcal{O}_{2}}\delta_{\mathcal{O}_{1}}\otimes \delta_{\mathcal{O}_{2}},
\end{equation}
where
\[
n_{\mathcal{O}}^{\mathcal{O}_{1},\mathcal{O}_{2}}
=
\big|\{w\in W_{J_{\mathcal{O}_{1}}}\backslash W_{H}/W_{J_{\mathcal{O}_{2}}}:\Phi_{J_{\mathcal{O}_{1}}}\cap w\cdot \Phi_{J_{\mathcal{O}_{2}}}=
w'\cdot\Phi_{J_{\mathcal{O}}}\text{ for some }w'\in W_H\}\big|.
\]
In particular, $\Delta_H$ preserves the subspace of associate invariant functions.
\end{lemma}
\begin{proof}
First we rewrite the coproduct $\Delta_{H}$ for associate invariant functions. Set $\mathcal{R}(J):=\Phi_{J}$, so that $\mathcal{R}$ is a bijection from $P(\Pi_{H})$ onto the set of root systems of all Levi subgroups of $H$ containing $T_H$. Let $f\in\mathbb{Z}[\mathcal{P}(\Pi_{H})]$ be associate invariant. Then for $(J_{1},J_{2})\in\mathcal{P}(\Pi_{H})\times\mathcal{P}(\Pi_{H})$,
\begin{equation}\label{different formulation of coproduct}
    \Delta_{H} (f)(J_1,J_2)=\sum_{w\in W_{J_1}\setminus W_{H}/W_{J_2}}f\big(\mathcal{R}^{-1}(\Phi_{J_{1}}\cap w\cdot\Phi_{J_{2}})\big).
\end{equation}
We note that in this reformulation of $\Delta_{H}$ for associate invariant functions the summands does not depend on a particular choice of the element of a double coset.
For any $J_{1}, J_{2}\in\mathcal{P}(\Pi_{H})$, $\Delta_{H}(\delta_{\mathcal{O}})$ evaluated at $(J_{1},J_{2})$ is equal to
\[
\sum_{w\in W_{J_{1}}\backslash W_{H}/W_{J_{2}}}\delta_{\mathcal{O}}\big(\mathcal{R}^{-1}(\Phi_{J_{1}}\cap w\cdot \Phi_{J_{2}})\big),
\]
which in turn is equal to
\[
\big|\{w\in W_{J_{1}}\backslash W_{H}/W_{J_{2}}:\Phi_{J_{1}}\cap w\cdot \Phi_{J_{2}}=w'\cdot\Phi_{J_{\mathcal{O}}}\text{ for some }w'\in W_H\}\big|.
\]
On the other hand, RHS of \eqref{lhsrhs} evaluated at $(J_{1},J_{2})$ is equal to $n^{\mathcal{O}_{1},\mathcal{O}_{2}}_{\mathcal{O}}$, where $\mathcal{O}_{1}$ (resp. $\mathcal{O}_2$) is the equivalence class of $J_{1}$ (resp. $J_{2}$). There exists $w_{1}, w_{2} \in W_{H}$ such that $\Phi_{J_{1}}=w_{1}\cdot \Phi_{J_{\mathcal{O}_{1}}}$, $\Phi_{J_{2}}=w_{2}\cdot \Phi_{J_{\mathcal{O}_{2}}}$ and so, $W_{J_1}=w_{1}W_{J_{\mathcal{O}_{1}}}w_{1}^{-1}$ and $W_{J_2}=w_{2}W_{J_{\mathcal{O}_{2}}}w_{2}^{-1}$.
Now the lemma follows from the bijection
\[
W_{J_{\mathcal{O}_{1}}}\backslash W_{H}/W_{J_{\mathcal{O}_{2}}}\rightarrow W_{J_{1}}\backslash W_{H}/W_{J_{2}}, \quad W_{J_{\mathcal{O}_{1}}}wW_{J_{\mathcal{O}_{2}}}\mapsto W_{J_{1}}(w_{1}ww_{2}^{-1})W_{J_{2}}.
\]
This finishes the proof of Lemma \ref{associateinvariantdelta}.
\end{proof}
\begin{remark}\label{cocommutative}
  The proof of Lemma \ref{associateinvariantdelta} suggests that \eqref{different formulation of coproduct} may be a better definition for $\Delta_{H}$ as it does not use \cite[Proposition 2.7.3]{Car}. In fact, it may be even better to view $f$ as a function on the set of root systems of the Levi subgroups. Moreover, using this formulation it is easy to see that $\Delta_{H}$ is co-commutative for associate invariant functions.
\end{remark}
We have the following corollary.
\begin{corollary}\label{sp and st are associate invariant}
Let $[Sp_{H}]$ and $[St_{H}]$ be as in Section \ref{coproduct}. Then $[Sp_{H}]$ and $[St_{H}]$ are associate invariant functions.
\end{corollary}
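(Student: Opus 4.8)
The plan is to deduce the statement from two facts that are already available: the closed formula \eqref{for GLn} for $|Sp_{H}(J)|$, and the combined content of Theorem~\ref{steinberg}$(ii)$ together with Lemma~\ref{associateinvariantdelta}.

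The first step would be to show that $[Sp_{H}]$ is itself associate invariant. Let $J_{1},J_{2}\subset\Pi_{H}$ be associates, so that $\Phi(J_{2})=w\cdot\Phi(J_{1})$ for some $w\in W_{H}$. Since $H$ is split with split maximal torus $T_{H}$, the Levi $\mathbb{F}_{q}$-subgroups $L_{J_{1}}$ and $L_{J_{2}}$ are split connected reductive, and their root data relative to $T_{H}$ are $(X^{*}(T_{H}),\Phi(J_{i}),X_{*}(T_{H}),\Phi(J_{i})^{\vee})$ for $i=1,2$. The automorphism of $X^{*}(T_{H})$ induced by $w$, together with the dual automorphism of $X_{*}(T_{H})$, carries the first root datum isomorphically onto the second, because $w\cdot\Phi(J_{1})^{\vee}=\Phi(J_{2})^{\vee}$ as well. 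By the isomorphism theorem for split reductive groups, $L_{J_{1}}\cong L_{J_{2}}$ over $\mathbb{F}_{q}$; alternatively one may lift $w$ to a point of $N_{H}(T_{H})(\mathbb{F}_{q})$, which is possible since $H$ is split, and conjugate, so that $L_{J_{1}}$ and $L_{J_{2}}$ are even $H(\mathbb{F}_{q})$-conjugate. In either case $|L_{J_{1}}|=|L_{J_{2}}|$, $\dim(L_{J_{1}})=\dim(L_{J_{2}})$ and $\rk(L_{J_{1}})=\rk(L_{J_{2}})$, so \eqref{for GLn} gives $|Sp_{H}(J_{1})|=|Sp_{H}(J_{2})|$.

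The second step is purely formal: by Theorem~\ref{steinberg}$(ii)$ we have $[St_{H}]=\Delta_{H}([Sp_{H}])$, and since $[Sp_{H}]$ lies in the subspace of associate invariant functions by the first step, while $\Delta_{H}$ preserves that subspace by Lemma~\ref{associateinvariantdelta}, it follows that $[St_{H}]$ is associate invariant. I expect the only real content to be the first step, namely that associate subsets $J_{1},J_{2}$ yield Levi $\mathbb{F}_{q}$-subgroups with the same order, dimension and rank; this rests on the split-ness of $H$ and the structure theory of reductive groups, and is essentially the sole obstacle, after which the conclusion for $[St_{H}]$ is immediate from results already proved.
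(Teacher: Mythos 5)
Your proof is correct and follows essentially the same route as the paper: both reduce to $L_{J_1}\cong L_{J_2}$ for associate $J_1,J_2$, read off $|Sp_H(J_1)|=|Sp_H(J_2)|$ from the closed formula for $|Sp_H(J)|$ in terms of $L_J$ (the paper cites \eqref{levi1}, you use the equivalent \eqref{for GLn}), and then pass to $[St_H]$ via Theorem~\ref{steinberg}$(ii)$ together with Lemma~\ref{associateinvariantdelta}. The only difference is that you spell out why $L_{J_1}\cong L_{J_2}$ (root-datum isomorphism, or lifting $w$ to $N_H(T_H)(\mathbb{F}_q)$), a fact the paper asserts without further comment.
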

\begin{proof}
Let $J, J'\in\Pi_{H}$ be such that $J\sim_{H}J'$. Then we have $L_{J}\simeq L_{J'}$ and as a consequence of \eqref{levi1}, it follows that $[Sp_{H}]$ is associate invariant. Now Lemma \ref{associateinvariantdelta} together with Theorem \ref{steinberg}$(ii)$ imply that $[St_{H}]$ is associate invariant in each variable.
\end{proof}
Assume that $H=H_1\times\ldots\times H_n$. For $k=1,\ldots, n$, let $\Pi_k$ be the set of simple roots of $H_k$ with respect to some maximal torus and a Borel subgroup containing it. We can identify $\Pi_{H}$ with the disjoint union $\bigsqcup_k\Pi_k$. Thus, $\mathcal{P}(\Pi_{H})=\prod_k\mathcal{P}(\Pi_k)$ and $\mathbb{Z}[\mathcal{P}(\Pi_{H})]=\bigotimes_k\mathbb{Z}[\mathcal{P}(\Pi_k)]$. Under this isomorphism, the following lemma follows from the definitions.
\begin{lemma}\label{lm1}
We have
\[
    [St_H]=[St_{H_1}]\otimes\ldots\otimes[St_{H_n}].
\]
\end{lemma}

\section{Bialynicki--Birula decomposition}
In this section we recall the Bialynicki--Birula decomposition. We will use these facts in the next section to give a proof of Theorem \ref{trip}.
\begin{definition}\label{affine fibration}
Let $X$ and $Z$ be two schemes. A morphism $\phi : X \rightarrow Z$ is called an \emph{affine fibration} of relative dimension $d$ if for every $z \in Z$, there is a Zariski open neighborhood $U$ of $z$ such that $X_{U} \cong U \times \mathbb{A}^{d}$ and this isomorphism identifies $\phi_{|_{U}} : X_{U} \rightarrow Z$ with the projection on the first factor.

A morphism $\phi : X \rightarrow Z$ is called a \emph{trivial affine fibration} of relative dimension $d$ if $X \cong Z \times \mathbb{A}^{d}$ and this isomorphism identifies $\phi : X \rightarrow Z$ with the projection on the first factor.
\end{definition}
We use the following result (see \cite[Theorem 3.2]{Bro}), known as the Bialynicki--Birula decomposition which is key to our calculation:

\begin{fact}\label{bb1} 
(Bialynicki--Birula, Hesselink, Iversen). 
Let $X$ be a smooth,
projective scheme over $k$ equipped with a $\mathbb{G}_{m}$-action. Then the following holds:
\begin{enumerate}[(i)]
    \item The fixed point locus $X^{\mathbb{G}_{m}}$ is a closed subscheme of $X$ and is smooth over $k$.
    \item There exists a numbering $X^{\mathbb{G}_{m}} = \bigsqcup_{i=1}^{n} Z_{i}$ of the connected components
    of $X^{\mathbb{G}_{m}}$, and a filtration of $X$ by closed subschemes:
    \[
      X=X_{n}\supset X_{n-1}\supset...\supset X_{0}\supset X_{-1}=\emptyset
    \]
    and affine fibrations $\phi_{i}:X_{i}-X_{i-1}\rightarrow Z_{i}$.
    \item The relative dimension of $\phi_{i}$ is the dimension of the positive eigenspace of the
    $\mathbb{G}_{m}$-action on the tangent space of $X$ at
    an arbitrary closed point $z \in Z_{i}$ and $\dim(Z_{i})=\dim(T_{z,X}^{\mathbb{G}_{m}})$ .
\end{enumerate}
\end{fact}
In particular, we obtain a stratification of $X$ by locally closed subsets $X_{i}^{+}:=X_{i}-X_{i-1}$. 
\begin{definition}
Let $Y$ be a separated scheme over $k$. Let $\phi:\mathbb{A}^{1}\backslash\{0\}\rightarrow Y$ be a morphism. If $\phi$ extends to a morphism $\widetilde{\phi}:\mathbb{A}^{1}\rightarrow Y$, we say that $\lim_{t\to 0}\phi(t)$ exists and we set it equal to $\widetilde{\phi}(0)$. Since $Y$ is separated over $k$ and $\mathbb{A}^{1}$ is reduced, the extension $\tilde{\phi}$ is unique. Note that if, moreover, $Y$ is proper over $k$ then an extension of $\phi$ always exists. 
\end{definition}

\begin{remark} \label{remark 1}
(see \cite[Section 3]{Bro}) The Bialynicki--Birula decomposition is explicit in the sense that the locally closed subsets $X_{i}^{+}$ is the set of all points $x \in X$ such that
$\lim_{t\to 0}t\cdot x \in Z_{i}$ where $(t, x) \mapsto t\cdot x$ is the $\mathbb{G}_{m}$-action. Moreover, the map $\phi_{i} : X_{i}^{+} \rightarrow Z_{i}$ is then given by $x \mapsto \lim_{t\to 0} t\cdot x$. 
\end{remark}
Let $k$ be a field. Let $S$ be a smooth separated 
scheme
over $k$ equipped with a $\mathbb{G}_{m}$-action. By \cite[Proposition A.8.10]{CGP}, $S^{\mathbb{G}_{m}}$ is smooth over $k$. By a \emph{smooth equivariant compactification} of $S$, we will mean a scheme $\overline{S}$ that is smooth and projective over $k$, $S$ is an open and dense subscheme of $\overline{S}$ and $\overline{S}$ is equipped with a $\mathbb{G}_{m}$-action that extends the $\mathbb{G}_{m}$-action on $S$. The following proposition is a consequence of Fact \ref{bb1}.
\begin{proposition}\label{proposition1}
Assume that there is a smooth equivariant compactification $\overline{S}$ of $S$.
Let $S^{\fin}$ be the subset of $S$ consisting of points $x$ in $S$ for which $\lim_{t\to 0}t\cdot x$ exists in $S$.
Then $S^{\fin}$ is a constructible subset of $S$ and there exists a stratification of $S^{\fin}$ by locally closed subsets as:
\[
S^{\fin}=\displaystyle\bigsqcup_{\alpha\in I}S^{+}_{\alpha}
\]
and a decomposition of $S^{\gm}$ as:
\[
S^{\gm}=\bigsqcup_{\alpha\in I}S_{\alpha}^{\gm},
\]
where $S_{\alpha}$ are the connected components of $S^{\gm}$, $\alpha\in I$. 
Moreover, there are affine fibrations $lim_{\alpha}:S_{\alpha}^{+}\rightarrow S_{\alpha}^{\gm}$ given by the limit map as $t\to 0$.
\end{proposition}
In the case of an equivariant vector bundle over a smooth projective scheme equipped with a $\gm$-action, we can say a bit more about the strata in Proposition \ref{proposition1}. 
Let $k$ be a field and let $X$ be a smooth projective 
scheme over $k$ equipped with a $\mathbb{G}_{m}$-action. Let $\pi:E\rightarrow X$ be an equivariant vector bundle over $X$.
Compactify $E$ by considering the projectivization $
\mathbb{P}\big(E\oplus (X\times\mathbb{A}^{1})\big)=:\overline{E}$. 
We extend the given $\mathbb{G}_{m}$-action on $E$
to a $\gm$-action on $\overline{E}$ by letting
$\gm$ act trivially on $\mathbb A^{1}$ and via the given $\gm$-action on $X$. Since  projectivization of a vector bundle over a smooth scheme is smooth, $\overline{E}$
is smooth. Thus $\overline{E}$ is a smooth equivariant compactification of $E$.

Now let us consider the Bialynicki--Birula decomposition of $X$. By Fact \ref{bb1}, $X$ has a stratification by locally closed subsets as:
\[
X=\bigsqcup_{\alpha\in I}X_{\alpha}^{+}
\]
and a decomposition of $X^{\gm}$ as:
\[
X^{\gm}=\bigsqcup_{\alpha\in I}X_{\alpha}^{\gm}
\]
where $X_{\alpha}^{\gm}$ are the connected components of $X^{\gm}$, $\alpha\in I$.

Since $\gm$ acts trivially on $X_{\alpha}^{\gm}$ and $\pi$ is $\gm$-equivariant, $\gm$ acts on the vector bundle $\pi^{-1}(X_{\alpha}^{\gm})\rightarrow X_{\alpha}^{\gm}$
fibrewise. Therefore, 
$\pi^{-1}(X_{\alpha}^{\gm})$ decomposes according to the characters of $\gm$, 
\[
\pi^{-1}(X_{\alpha}^{\gm})=\oplus_{n\in \mathbb{Z}}V_{\alpha,n},
\]
where $V_{\alpha,n}$ is the subbundle of $\pi^{-1}(X_{\alpha}^{\gm})$ on which $t\in\gm$ acts 
via multiplication by $t^n$. We have the following proposition.
\begin{proposition}\label{stratification of bundle}
We have $E^{\fin}$ is a constructible subset of $E$ and there exists a stratification of $E^{\fin}$ by locally closed subsets as:
\[
E^{\fin}=\displaystyle\bigsqcup_{\alpha\in I}E^{+}_{\alpha}
\]
and a decomposition of $E^{\gm}$ as:
\[
E^{\gm}=\bigsqcup_{\alpha\in I}V_{\alpha,0},
\]
where $V_{\alpha,0}$ are the connected components of $E^{\gm}$, $\alpha\in I$ 
and there are affine fibrations $lim_{\alpha}:E_{\alpha}^{+}\rightarrow V_{\alpha,0}$ given by the limit map as $t\to 0$.
\end{proposition}
\begin{proof}
Notice that we have
$
E^{\gm}=\bigsqcup_{\alpha\in I}V_{\alpha,0}.
$
Since $V_{\alpha,0}=\big(\pi^{-1}(X_{\alpha}^{\gm})\big)^{\gm}$, $V_{\alpha,0}$ is closed, $\alpha\in I$. Moreover, since 
$V_{\alpha,0}$, $\alpha\in I$ are connected, we get that $V_{\alpha,0}$, $\alpha\in I$ are the connected components of $E^{\gm}$. 
It remains to use Proposition \ref{proposition1}.
\end{proof}
\section{Counting triples}\label{formula}
In this section we prove our main result Theorem \ref{trip}, i.e., we construct a stratification of our moduli space of triples into pieces that can be described in terms of affine fibrations over the generalized Steinberg varieties. 

Let $G,T,B,\Pi,W,\Phi$ be as in Section \ref{parabolic-levi} and let $\mu,J_{0},J_{\infty}$ be as in the statement of Theorem \ref{trip}. Let $\mathfrak{g}:=\lie(G)$ be the Lie algebra of $G$. 
Since $\mu$, $J_{0}$ and $J_{\infty}$ are fixed in the statement of Theorem \ref{trip}, we will denote $\mathcal{T}rip_{\mu}(J_{0},J_{\infty})$ by $\mathcal{T}rip$ in the proof of Theorem \ref{trip}.
\subsection{Strategy of the proof}\label{strategy of the proof}
In this section, we outline the strategy of the proof of Theorem \ref{trip}.
Let $\mathbb{G}_{m}$ act on $\mathfrak{g}$ via $\mu$, so $t\in\gm$ acts trivially on $\mathfrak{h}$ and via multiplication by $t^{\langle\alpha,\mu\rangle}$ on the root spaces $\mathfrak{g}_{\alpha}$. 
Let $\mathfrak{g}^{0}:=\mathfrak{h}\oplus_{\langle\alpha,\mu\rangle=0}\mathfrak{g}_{\alpha},\mathfrak{g}^{+}:=\oplus_{\langle\alpha,\mu\rangle>0}\mathfrak{g}_{\alpha}$ and $\mathfrak{g}^{-}:=\oplus_{\langle\alpha,\mu\rangle<0}\mathfrak{g}_{\alpha}$. Then we get the following $\mathbb{G}_{m}$-stable decomposition of $\mathfrak{g}$:
\begin{equation}\label{decomposition of g}
\mathfrak{g}=\mathfrak{g}^{0}\oplus\mathfrak{g}^{+}\oplus\mathfrak{g}^{-}.
\end{equation}
Note that we have $\mathfrak{g}^{0}=\lie(L_{\mu})$.

For $J\subset\Pi$, define $\mathcal{B}_{J}$ to be the scheme of pairs $(\mathfrak{p},v)$ such that $\mathfrak{p}\in G/P_{J},v\in\mathfrak{p}$,
where we identify $G/P_{J}$ with the scheme of parabolic subalgebras of $G$ that are 
in the $G$-orbit of $\mathfrak{p}_{J}:=\lie(P_{J})$ under the adjoint action. Note that $\mathcal{B}_{J}$ is vector bundle over $G/P_{J}$, in fact, it is a vector subbundle of the trivial vector bundle $G/P_{J}\times\mathfrak{g}$ over $G/P_{J}$. As vector bundles over smooth schemes are smooth, we get that 
$\mathcal{B}_{J}$ is smooth. Note that $G$ acts in a natural way on $G/P_{J}\times\mathfrak{g}$ preserving $\mathcal{B}_{J}$. Pulling back this action along $\mu:\gm\rightarrow T\rightarrow G$, we get an action
\begin{equation}\label{action on B_J}
    \gm\curvearrowright\mathcal{B}_{J}.
\end{equation} 
We introduce the following object for our proof of Theorem \ref{trip}.
\begin{definition}\label{definition of quad}
Let $\cQuad$ be the closed subscheme of $\mathcal{B}_{J_{0}}\times\mathcal{B}_{J_{\infty}}$ consisting of quadruples $(\mathfrak{p}_{0},v_0,\mathfrak{p}_{\infty},v_{\infty})$ such that $v_{0}$ and $v_{\infty}$ are nilpotent and with respect to the decomposition \eqref{decomposition of g}, the $\mathfrak{g}^{-}$-components of $v_{0}$ and $v_{\infty}$ are zero and their $\mathfrak{g}^{0}$-components are equal.  
\end{definition}
Note that $\cQuad$ depends on $\mu$, $J_{0}$ and $J_{\infty}$.
\begin{remark}\label{equivalent definition of Quad}
The requirement of $v_{0}$ and $v_{\infty}$ being nilpotent in the definition of $\cQuad$ is equivalent to the requirement of the $\mathfrak{g}^{0}$-components of $v_{0}$ and $v_{\infty}$ being nilpotent.
\end{remark}
Recall $\mathcal{B}_{J}^{\fin}$ from Proposition \ref{proposition1}. Since $\mathcal{B}_{J}$ is an equivariant vector bundle over $G/P_{J}$, we stratify $\mathcal{B}_{J}^{\fin}$ by applying Proposition \ref{stratification of bundle} on $\mathcal{B}_{J}$.
We obtain the required stratification of $\mathcal{T}rip$ in the following manner: trivialize the fibers of the line bundle $\mathcal{O}(1)$ at $0$ and $\infty$ to identify ad$(\cE_{\mu})_{0}$ and ad$(\cE_{\mu})_{\infty}$ with $\mathfrak{g}$, now evaluating the nilpotent sections at $0$ and $\infty$ gives us a $\gm$-equivariant morphism $\mathcal{T}rip\rightarrow\mathcal{B}_{J_{0}}\times\mathcal{B}_{J_{\infty}}$ with $\gm$ acting diagonally on $\mathcal{B}_{J_{0}}\times\mathcal{B}_{J_{\infty}}$. We will see in Lemma \ref{quad} that this evaluation morphism is a trivial affine fibration onto its image, which is equal to $\cQuad$. Thus it is enough to stratify $\cQuad$.
We show that for points in $\cQuad$, the limit exists in $\mathcal{B}_{J_{0}}\times\mathcal{B}_{J_{\infty}}$ as $t\to 0$
(see Lemma \ref{lemma}), so $\cQuad\subset\mathcal{B}_{J_{0}}^{\fin}\times\mathcal{B}_{J_{\infty}}^{\fin}$.
We will see in Lemma \ref{quadquad} that intersecting the strata of $\mathcal{B}_{J_{0}}^{\fin}\times\mathcal{B}_{J_{\infty}}^{\fin}$ with $\cQuad$, we obtain a
stratification of $\cQuad$.
\subsection{Reduction to $\cQuad$}
Now we consider 
evaluations of the nilpotent sections of ad$(\cE_{\mu})$ at $0$ and $\infty$ and then use them to reduce Theorem \ref{trip} to finding a stratification of $\cQuad$.
Recall that as $\mathcal{O}(1)^{\times}$ is a $\mathbb{G}_{m}$-bundle over $\mathbb{P}^1$, $\mathbb{G}_{m}$ acts on ad$(\cE_{\mu})=\mathcal{O}(1)^{\times}\times^{\mathbb{G}_{m}}\mathfrak{g}$ (Section \ref{basic_principal bundles}) and this gives an action:
\begin{equation}\label{action on sections}
    \gm\curvearrowright H^{0}(\mathbb{P}^{1},\text{ad}(\cE_{\mu})).
\end{equation}
First, we describe sections of the adjoint bundle ad$(\cE_{\mu})$ over $\mathbb{P}^1$. Since $\text{ad}(\cE_{\mu})=\mathcal{O}(1)^{\times}\times^{\gm}\mathfrak{g}$, the $\mathbb{G}_{m}$-stable decomposition \eqref{decomposition of g} of $\mathfrak{g}$ gives a $\mathbb{G}_{m}$-stable decomposition of $\text{ad}(\cE_{\mu})$ as 
$\text{ad}(\cE_{\mu})=\text{ad}(\cE_{\mu})^{0}\oplus\text{ad}(\cE_{\mu})^{+}\oplus\text{ad}(\cE_{\mu})^{-}$, 
where 
$\text{ad}(\cE_{\mu})^{0}:=\mathcal{O}(1)^{\times}\times^{\mathbb{G}_{m}}\mathfrak{g}^{0}, \text{ad}(\cE_{\mu})^{+}:=\mathcal{O}(1)^{\times}\times^{\mathbb{G}_{m}}\mathfrak{g}^{+}$ and $\text{ad}(\cE_{\mu})^{-}:=\mathcal{O}(1)^{\times}\times^{\mathbb{G}_{m}}\mathfrak{g}^{-}$.
Since $\text{ad}(\cE_{\mu})^{-}$ is a direct sum of the line bundles $\mathcal{O}(m)$, $m<0$ and $H^{0}(\mathbb{P}^{1},\mathcal{O}(m))=0$ for $m<0$, we get
\[
H^{0}(\mathbb{P}^{1},\text{ad}(\cE_{\mu}))=
\mathfrak{g}^{0}\oplus H^{0}\big(\mathbb{P}^{1},\text{ad}(\cE_{\mu})^{+}\big).
\]
Thus
\begin{equation}\label{global sections}
H^{0}(\mathbb{P}^{1},\text{ad}(\cE_{\mu}))
=
\mathfrak{g}^{0}\oplus \Big(\oplus_{\alpha:\langle\alpha,\mu\rangle>0}H^{0}\big(\mathbb{P}^{1},\mathcal{O}(\langle\alpha,\mu\rangle)\big)\Big).
\end{equation}
For $x=0,\infty$, $\text{ad}(\cE_{\mu})_{x}$ has a structure of a Lie algebra and for $\Psi\in H^{0}(\mathbb{P}^{1},\text{ad}(\cE_{\mu}))$, denote the value of $\Psi$ at $x$ by $\Psi_x$, which is an element of $\text{ad}(\cE_{\mu})_{x}$.

We get the following $\gm$-stable decomposition of $\text{ad}(\cE_{\mu})_{x}$:
\[
\text{ad}(\cE_{\mu})_{x}=\text{ad}(\cE_{\mu})_{x}^{0}\oplus\text{ad}(\cE_{\mu})_{x}^{+}\oplus\text{ad}(\cE_{\mu})_{x}^{-},\quad x=0,\infty.
\]

\begin{remark}\label{fiber isomorphism}
By trivializing the fibers of the $\gm$-bundle $\mathcal{O}(1)^{\times}$ at $0$ and $\infty$, we identify $(\cE_{\mu})_{x}/P_{J_{x}}$ with $G/P_{J_{x}}$ and we get
a $\gm$-equivariant isomorphism (which is fixed from now on) $\text{ad}(\cE_{\mu})_{x}\cong\mathfrak{g}$, which maps $\text{ad}(\cE_{\mu})_{x}^{0}$ isomorphically onto $\mathfrak{g}^{0}$, $x=0,\infty$. We note that the isomorphism $\text{ad}(\cE_{\mu})_{x}^{0}\cong\mathfrak{g}^{0}$ is independent of the trivialization. From now on, we will use the isomorphism $\text{ad}(\cE_{\mu})_{x}\cong\mathfrak{g}$ to identify elements of $\text{ad}(\cE_{\mu})_{x}$ with those of $\mathfrak{g}$, $x=0,\infty$.
\end{remark}
The $\mathbb G_{m}$-action \eqref{action on B_J} on $\mathcal{B}_{J_{x}}$, $x=0,\infty$ gives a $\gm$-action on $\mathcal{B}_{J_{0}}\times\mathcal{B}_{J_{\infty}}$ by $\gm$ acting diagonally. Since the decomposition \eqref{decomposition of g} is $\gm$-stable, we get an action
\begin{equation}\label{action on quad}
\gm\curvearrowright\mathcal{Q}uad.
\end{equation}
Using Remark \ref{fiber isomorphism}, we consider the evaluation morphism at $0$ and $\infty$ as taking values in $\mathcal{B}_{J_0}\times\mathcal{B}_{J_\infty}$:
\[
    ev^{0,\infty}:\mathcal{T}rip\rightarrow\mathcal{B}_{J_0}\times\mathcal{B}_{J_\infty},\quad
(\mathfrak{p}_{0},\mathfrak{p}_{\infty},\Psi)\mapsto (\mathfrak{p}_{0},\Psi_0,\mathfrak{p}_{\infty},\Psi_\infty).
\]
Consider the evaluation map at $0$ and $\infty$,
\[
    eval:H^{0}(\mathbb{P}^{1},\text{ad}(\cE_{\mu}))\rightarrow\mathfrak{g}\oplus\mathfrak{g},\quad\Psi\mapsto(\Psi_{0},\Psi_{\infty}).
\]
Notice that for $\Psi\in\mathfrak{g}^{0}$, $eval(\Psi)=(\Psi,\Psi)$. Since $\Psi\in H^{0}(\mathbb{P}^{1},\text{ad}(\cE_{\mu}))$ is nilpotent if and only if the $\mathfrak{g}^{0}$-component of $\Psi$ is nilpotent and the evaluation map $H^{0}(\mathbb{P}^{1},\mathcal{O}(m))\rightarrow\mathbb{A}^{1}\times\mathbb{A}^{1}$, $\phi\mapsto(\phi_{0},\phi_{\infty})$ is surjective for $m>0$, the image of $ev^{0,\infty}$ is equal to $\cQuad$.

The next lemma relates $\mathcal{T}rip$ and $\cQuad$ via evaluation at $0$ and $\infty$.

\begin{lemma}\label{quad}
The evaluation morphism
\[
ev^{0,\infty} : \mathcal{T}rip\rightarrow \cQuad
\]
is $\gm$-equivariant and a trivial affine fibration of relative dimension $\sum_{ \langle\alpha,\mu\rangle>0}\Big(\langle\alpha,\mu\rangle-1\Big)$. Moreover, $ev^{0,\infty}$ gives the following commutative triangle:
\[
\begin{tikzcd}[row sep=4.5em, column sep=2.5em]
    \mathcal{T}rip \arrow[rr, "\sim"] \arrow[swap, dr, "ev^{0,\infty}"] & & \cQuad\times W \arrow[dl, "pr_1"] \\
    & \cQuad \\[-4.6em]
\end{tikzcd}
\]
where $W$ is a $\gm$-representation with $\gm$ acting by positive weights and all morphisms in the above triangle are $\gm$-equivariant. In particular, $ev^{0,\infty} : \mathcal{T}rip\rightarrow \cQuad$ induces an isomorphism
\begin{equation}\label{isomorphism}
      ev^{0,\infty} : \mathcal{T}rip^{\gm}\xrightarrow{\sim} \cQuad^{\gm}.
\end{equation}
\end{lemma}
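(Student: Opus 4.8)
The plan is to make the decomposition \eqref{global sections} completely explicit and read off everything from it. First I would fix a nilpotent section $\Psi \in H^{0}(\mathbb{P}^{1}, \mathrm{ad}(\cE_{\mu}))$. By \eqref{global sections} we may write $\Psi = \Psi^{0} + \sum_{\alpha:\langle\alpha,\mu\rangle>0}\Psi_{\alpha}$ where $\Psi^{0}\in\mathfrak{g}^{0}$ is constant and $\Psi_{\alpha}\in H^{0}(\mathbb{P}^{1},\mathcal{O}(\langle\alpha,\mu\rangle))$. Choosing the homogeneous coordinate so that $0$ and $\infty$ are the two standard points, each $H^{0}(\mathbb{P}^{1},\mathcal{O}(m))$ with $m>0$ is identified with the space of degree-$\leq m$ polynomials, a space of dimension $m+1$; the evaluation map $\phi\mapsto(\phi_{0},\phi_{\infty})$ to $\mathbb{A}^{1}\times\mathbb{A}^{1}$ is linear and surjective with kernel of dimension $m-1$ (the polynomials vanishing at both points). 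So globally the evaluation map $eval : H^{0}(\mathbb{P}^{1},\mathrm{ad}(\cE_{\mu}))\to\mathfrak{g}\oplus\mathfrak{g}$ is linear, restricts to $v\mapsto(v,v)$ on the $\mathfrak{g}^{0}$-summand (the two trivializations of $\mathcal{O}(1)^{\times}$ at $0,\infty$ agree on $\mathfrak{g}^{0}$ by Remark \ref{fiber isomorphism}), has image $\{(v_{0},v_{\infty}) : \text{components along }\mathfrak{g}^{-}\text{ vanish and }\mathfrak{g}^{0}\text{-components agree}\}$, and has kernel $W := \bigoplus_{\alpha:\langle\alpha,\mu\rangle>0}\big(\text{sections of }\mathcal{O}(\langle\alpha,\mu\rangle)\text{ vanishing at }0,\infty\big)$, which has dimension $\sum_{\langle\alpha,\mu\rangle>0}(\langle\alpha,\mu\rangle-1)$. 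Splitting $eval$ $\mathbb{G}_{m}$-equivariantly (each $\mathcal{O}(m)$-summand is $\gm$-stable, and one can pick a $\gm$-stable complement to the kernel inside each summand) gives a $\gm$-equivariant isomorphism $H^{0}(\mathbb{P}^{1},\mathrm{ad}(\cE_{\mu})) \cong \mathrm{image}(eval) \oplus W$, with $\gm$ acting on $W$ with weight $\langle\alpha,\mu\rangle > 0$ on the $\alpha$-block, hence by strictly positive weights.

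Next I would upgrade this to the scheme $\mathcal{T}rip$. Recall $\mathcal{T}rip$ is the closed subscheme of $\cE_{0}/P_{J_{0}}\times\cE_{\infty}/P_{J_{\infty}}\times H^{0}(\mathbb{P}^{1},\mathrm{ad}(\cE_{\mu}))$ cut out by the conditions that $\Psi$ is nilpotent, $\Psi_{0}\in\lie(P_{0})$, $\Psi_{\infty}\in\lie(P_{\infty})$. Under the identifications of Remark \ref{fiber isomorphism}, a point is a tuple $(P_{0},P_{\infty},\Psi)$ with $P_{0}\in G/P_{J_{0}}$, $P_{\infty}\in G/P_{J_{\infty}}$; writing $\Psi = (\text{its image } (\Psi_{0},\Psi_{\infty}) \text{ in } \mathrm{image}(eval)) + (\text{its } W\text{-part})$, the conditions "$\Psi$ nilpotent", "$\Psi_{0}\in\lie(P_{0})$", "$\Psi_{\infty}\in\lie(P_{\infty})$" depend only on the first coordinate and on $(P_{0},P_{\infty})$ — indeed "$\Psi$ nilpotent" $\Leftrightarrow$ "$\mathfrak{g}^{0}$-component nilpotent" $\Leftrightarrow$ "$(\Psi_{0},\Psi_{\infty})$ has nilpotent $\mathfrak{g}^{0}$-part" (Remark \ref{equivalent definition of Quad}), and $\Psi_{0},\Psi_{\infty}$ are by definition the evaluations. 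So the product decomposition $H^{0} \cong \mathrm{image}(eval)\oplus W$ induces an isomorphism of schemes
\[
\mathcal{T}rip \;\xrightarrow{\;\sim\;}\; \cQuad \times W
\]
fitting into the asserted commutative triangle with $pr_{1}$ on the right and $ev^{0,\infty}$ on the left: on $\cQuad$ it is exactly the data $(P_{0},v_{0},P_{\infty},v_{\infty})$ with $v_{0}=\Psi_{0}$, $v_{\infty}=\Psi_{\infty}$, and the image of $ev^{0,\infty}$ is $\cQuad$ by the computation above. All three maps are $\gm$-equivariant because $eval$, its splitting, and the evaluation morphisms are; this makes $ev^{0,\infty}$ a trivial affine fibration with fiber $W \cong \mathbb{A}^{\dim W}$, $\dim W = \sum_{\langle\alpha,\mu\rangle>0}(\langle\alpha,\mu\rangle - 1)$.

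Finally, \eqref{isomorphism} follows formally: since $\gm$ acts on $W$ with strictly positive weights, $W^{\gm} = \{0\}$, so $(\cQuad\times W)^{\gm} = \cQuad^{\gm}\times\{0\} = \cQuad^{\gm}$, and the triangle restricts to an isomorphism $\mathcal{T}rip^{\gm}\xrightarrow{\sim}\cQuad^{\gm}$.

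I expect the main obstacle to be bookkeeping rather than conceptual: one must check carefully that the $\gm$-equivariant splitting of $eval$ can be chosen compatibly (it can, summand by summand, since $\gm$ acts on $H^{0}(\mathbb{P}^{1},\mathcal{O}(m))$ semisimply with the kernel of evaluation a $\gm$-submodule), and that the three closed conditions defining $\mathcal{T}rip$ genuinely factor through $\cQuad$ — in particular that "$\Psi$ nilpotent" is equivalent to a condition on $(\Psi_{0},\Psi_{\infty})$ alone, which is Remark \ref{equivalent definition of Quad}. Everything else is the standard fact that $H^{0}(\mathbb{P}^{1},\mathcal{O}(m))\to\mathbb{A}^{2}$ is surjective with $(m-1)$-dimensional kernel for $m>0$, together with the vanishing $H^{0}(\mathbb{P}^{1},\mathcal{O}(m))=0$ for $m<0$ already recorded before \eqref{global sections}.
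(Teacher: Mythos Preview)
Your proposal is correct and follows essentially the same approach as the paper: both compute the linear evaluation map $eval$ on $H^{0}(\mathbb{P}^{1},\mathrm{ad}(\cE_{\mu}))$, identify its image and its kernel $W$ (with strictly positive $\gm$-weights), split $\gm$-equivariantly, and then observe that the defining conditions of $\mathcal{T}rip$ depend only on the evaluations and $(P_0,P_\infty)$, so that $ev^{0,\infty}$ is the pullback of a trivial $W$-bundle. Your version is slightly more explicit about splitting summand by summand, while the paper invokes reductivity of $\gm$ in one line and phrases the last step as a cartesian square, but there is no substantive difference.
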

\begin{proof}
Put
\[
\mathfrak{g}_{0,\infty}:=\{(v_{0},v_{\infty})\in\mathfrak{g}\oplus\mathfrak{g}:\mathfrak{g}^{0}\text{-components of }
v_{x}\text{ are equal},
 \text{ }\mathfrak{g}^{-}\text{-components of }v_{x} \text{ are } 0, \text{ }x=0,\infty\}.
\]
Since the image of $eval$ lies inside $\mathfrak{g}_{0,\infty}$,
we will consider $eval$ with codomain $\mathfrak{g}_{0,\infty}$,
\[
eval:H^{0}(\mathbb{P}^{1},\text{ad}(\cE_{\mu}))
\rightarrow \mathfrak{g}_{0,\infty}.
\]
Since the evaluation map $H^{0}(\mathbb{P}^{1},\mathcal{O}(m))\rightarrow\mathbb{A}^{1}\times\mathbb{A}^{1}$, $\phi\mapsto(\phi_{0},\phi_{\infty})$ is surjective for $m>0$ and $eval(v)=(v,v)$ for $v\in\mathfrak{g}^0$, the morphism $eval$ is surjective. 

Let $W:=\ker(eval)$. Notice that $W$ is a $\gm$-representation with $\gm$ acting by positive weights. Since $\gm$ is reductive, we get a $\gm$-equivariant isomorphism:
\[
H^{0}(\mathbb{P}^{1},\text{ad}(\cE_{\mu}))\cong W\times\mathfrak{g}_{0,\infty}.
\]
Denote the nilpotent elements of $H^{0}(\mathbb{P}^{1},\text{ad}(\cE_\mu))$ by $H^{0}(\mathbb{P}^{1},\text{ad}(\cE_\mu))^{nil}$. 
Let $\mathfrak{g}^{nil}_{0,\infty}$ denote the set of elements $(v_{0},v_{\infty})\in\mathfrak{g}_{0,\infty}$ such that the $\mathfrak{g}^{0}$-components of $v_{0}$ and $v_{\infty}$ are nilpotent.
Since $\Psi\in H^{0}(\mathbb{P}^{1},\text{ad}(\cE_\mu))$ is nilpotent if and only if the $\mathfrak{g}^{0}$-component of $\Psi$ is nilpotent, 
we get a $\gm$-equivariant isomorphism
\[
     H^{0}(\mathbb{P}^{1},\text{ad}(\cE_\mu))^{nil}\cong W\times\mathfrak{g}^{nil}_{0,\infty}.
\]
Since
$ev^{0,\infty} : \mathcal{T}rip\rightarrow \cQuad$ is the pullback of $H^{0}(\mathbb{P}^{1},\text{ad}(\cE_\mu))^{nil}\xrightarrow{eval} \mathfrak{g}^{nil}_{0,\infty}$ along the natural projection $\cQuad\rightarrow\mathfrak{g}_{0,\infty}^{nil}$, 
we get a $\gm$-equivariant isomorphism
\[
\mathcal{T}rip\cong W\times\cQuad.
\]
The statement about the relative dimension follows from the fact that $\Psi\in H^{0}(\mathbb{P}^{1},\text{ad}(\cE_\mu))$ is nilpotent if and only if the $\mathfrak{g}^{0}$-component \eqref{decomposition of g} of $\Psi$ is nilpotent, \eqref{global sections}, $eval(\Psi)=(\Psi,\Psi)$ for $\Psi\in\mathfrak{g}^{0}$ and by the fact that the evaluation map $H^{0}(\mathbb{P}^{1},\mathcal{O}(m))\rightarrow\mathbb{A}^{1}\times\mathbb{A}^{1}$, $\phi\mapsto(\phi_{0},\phi_{\infty})$ has nullity $m-1$ for $m>0$.
This finishes the proof of Lemma \ref{quad}.
\end{proof}
Thus we have reduced the problem of finding a stratification of $\mathcal{T}rip$ to finding a stratification of $\cQuad$.
\begin{remark}\label{twopointsonto}
\begin{enumerate}[(i)]
    \item The argument used above for reducing the problem from $\mathcal{T}rip$ to $\cQuad$ is a direct generalization of the argument given in \cite[Section 5.4]{Mel}.
    \item Let $S$ be a set of $k$-rational marked points on $\mathbb{P}^1$ such that $|S|>2$. Since the evaluation map $H^{0}(\mathbb{P}^{1},\mathcal{O}(m))\rightarrow(\mathbb{A}^{1})^{|S|}$, $\phi\mapsto(\phi_{x})_{x\in S}$ is not necessarily surjective for $m>0$, the proof of Lemma \ref{quad} fails for more than two marked points. 
\end{enumerate}
\end{remark}
\subsection{Stratification of $\mathcal{B}_{J}^{\fin}$} \label{stratification of bjfin}
The following example of the Bialynicki--Birula decomposition will be important to us. 

Let $\mathbb{G}_{m}$ act on $G/P_{J}$ via $\mu$. 
We have an explicit description of the connected components of the fixed point locus given by the following result:

\begin{fact}\label{b2}
Recall from Section \ref{main theorem} that $L_{\mu}$ is the identity component of the centralizer of $\mu(\mathbb{G}_{m})$ in $G$.
Then
\[
(G/P_{J})^{\mathbb{G}_{m}}=\displaystyle\bigsqcup_{w\in W_{\Pi_{\mu}}\backslash W/W_{J}}\quad Z_{w}
\]
with $Z_{w}$ the orbit of $Ad_{w}(\mathfrak{p}_{J})$ under $L_{\mu}$. In particular, the connected components $Z_{w}$ of the fixed point locus $(G/P_{J})^{\mathbb{G}_{m}}$ appearing in the Bialynicki--Birula decomposition of $G/P_{J}$ (Fact \ref{bb1}(ii)) are in one to one correspondence with the elements of $W_{\Pi_{\mu}}\backslash W/W_{J}$.
\end{fact}
\par
The above statement follows by reducing to $\overline{k}$ and by noting that the proof of \cite[Lemma 1]{Fre} works for any algebraically closed field.

Note that $Z_{w}\cong L_{\mu}/(L_{\mu}\cap w\cdot P_{J})$ (see \cite[Proposition 7.12]{Mil1}), which is a partial flag variety of the Levi subgroup $L_{\mu}$ of $G$ defined over $k$. From Fact \ref{b2} we get:
\[
(G/P_{J})^{\mathbb{G}_{m}}\cong\displaystyle\bigsqcup_{w\in W_{\Pi_{\mu}}\backslash W/W_{J}}\quad L_{\mu}/(L_{\mu}\cap w\cdot P_{J}).
\]
Let $\pi: \mathcal{B}_{J}\rightarrow G/P_{J}$ be the projection.
Note that $\pi$ is $\gm$-equivariant where $\gm$ acts on $\mathcal{B}_{J}$ as in \eqref{action on B_J}. Thus $\mathcal{B}_{J}$ is an equivariant vector bundle over the smooth projective scheme $G/P_{J}$. By 
Proposition \ref{stratification of bundle}, we have a stratification of $\mathcal{B}_{J}^{\fin}$ by locally closed subsets as:
\begin{equation}\label{prpn2}
\mathcal{B}_{J}^{\fin}=\displaystyle\bigsqcup_{w\in W_{\Pi_{\mu}}\backslash W/W_{J}}\mathcal{B}_{J,w}^{+}
\end{equation}
and a decomposition of $\mathcal{B}_{J}^{\gm}$ as
\begin{equation}\label{relative dimension}
\mathcal{B}_{J}^{\gm}=\bigsqcup_{w\in W_{\Pi_{\mu}}\backslash W/W_{J}}V_{w,0},
\end{equation}
where $V_{w,0}$ are the connected components of $\mathcal{B}_{J}^{\gm}$, $w\in W_{\Pi_{\mu}}\backslash W/W_{J}$. 
Moreover, there are affine fibrations $lim_{w}:\mathcal{B}_{J,w}^{+}\rightarrow V_{w,0}$ given by the limit map as $t\to 0$.
\begin{remark}
We can describe $V_{w,0}$ more explicitly, it is isomorphic to $\mathcal{B}_{\Pi_{\mu}\cap w\cdot J}$, where the underlying group is $L_{\mu}$.
Indeed, identify $L_\mu/(L_{\mu}\cap w\cdot P_{J})$ with the scheme of parabolic subgroups of $L_{\mu}$ that are conjugate to $L_{\mu}\cap w\cdot P_{J}$. By Fact \ref{b2}, we obtain
\begin{equation}\label{pre-steinberg}
V_{w,0}\cong\{(\mathfrak{p}',v):\mathfrak{p}'\in L_\mu/(L_{\mu}\cap w\cdot P_{J}),v\in\mathfrak{p}'\},
\end{equation}
where the above isomorphism is given by $(\mathfrak{p},v)\mapsto(\mathfrak{p}\cap \lie(L_{\mu}),v)$. Note that if, for some $v'\in\mathfrak{g}$ we have $\text{Ad}_{\mu(t)}\cdot v'=v'$ for all $t\in\gm$, then $v'\in\lie(L_\mu)$. Therefore, $v\in\mathfrak{p}\cap\lie(L_{\mu})=\lie(P\cap L_{\mu})$. Thus $(\mathfrak{p},v)\mapsto(\mathfrak{p}\cap \lie(L_{\mu}),v)$ is a well-defined morphism.
\end{remark}
The next proposition gives the relative dimension of $lim_{w}$.
\begin{proposition}\label{relative dimension of lim}
The relative dimension of the affine fibration $lim_{w}:\mathcal{B}_{J,w}^{+}\rightarrow V_{w,0}$ is $(\dim G-\dim L_{\mu})/2$.
\end{proposition}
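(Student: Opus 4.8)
The plan is to compute the relative dimension of $lim_{w}$ by unwinding Fact \ref{bb1}(iii): the relative dimension equals the dimension of the positive-weight part of the tangent space to $\mathcal{B}_{J}$ at an arbitrary closed point of the fixed component $V_{w,0}$, where the $\gm$-action comes from $\mu$ via \eqref{action on B_J}. Since $\mathcal{B}_{J}$ is a $\gm$-equivariant vector bundle over $G/P_{J}$ (with $\gm$ acting through $\mu$), at a fixed point $(P,v)\in V_{w,0}$ the tangent space splits $\gm$-equivariantly as the tangent space to $G/P_{J}$ at $P$ plus the fibre $\lie(P)$ of $\mathcal{B}_{J}$. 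So I would compute the positive-weight dimensions of these two pieces separately and add them.

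First I would pick the most convenient fixed point in $V_{w,0}$, namely $P=wP_{J}w^{-1}$ (this is $\gm$-fixed by Fact \ref{b2}, being $L_{\mu}$-fixed — wait, it is the basepoint of the $L_\mu$-orbit $Z_w$; any point of $Z_w$ works since all give the same weight multiplicities, the action being trivial on $Z_w$ after passing to weights, but the computation is cleanest here). The tangent space to $G/P_{J}$ at $wP_{J}w^{-1}$ is $\mathfrak{g}/\lie(wP_{J}w^{-1})$, and the fibre of $\mathcal{B}_{J}$ there is $\lie(wP_{J}w^{-1})$; their sum is just $\mathfrak{g}$ as a $\gm$-representation (the extension and the decomposition being $\gm$-stable). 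Therefore the relative dimension of $lim_w$ is simply $\dim\mathfrak{g}^{+}$, the dimension of the positive-weight part of $\mathfrak{g}$ under $\mu$, independent of $w$ and of $J$. Now from the decomposition \eqref{decomposition of g}, $\mathfrak{g}=\mathfrak{g}^{0}\oplus\mathfrak{g}^{+}\oplus\mathfrak{g}^{-}$, and the pairing $\alpha\mapsto-\alpha$ gives a bijection between roots with $\langle\alpha,\mu\rangle>0$ and roots with $\langle\alpha,\mu\rangle<0$, so $\dim\mathfrak{g}^{+}=\dim\mathfrak{g}^{-}$. Since $\dim\mathfrak{g}^{0}=\dim\lie(L_{\mu})=\dim L_{\mu}$, we get $\dim\mathfrak{g}^{+}=(\dim\mathfrak{g}-\dim L_{\mu})/2=(\dim G-\dim L_{\mu})/2$, which is the claimed value.

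The main subtlety — and the step I would be most careful about — is the reduction of the tangent-space weight computation at a general point of $V_{w,0}$ to the computation at the single point $wP_{J}w^{-1}$. Strictly, Fact \ref{bb1}(iii) says the relative dimension is computed at an arbitrary closed point $z\in Z_i$, and different points of $V_{w,0}$ sit over different points of $Z_w=L_\mu\cdot(wP_Jw^{-1})$; I should check that the positive-weight dimension of $T_{z}\mathcal{B}_J$ is constant along $V_{w,0}$. This is automatic from the theorem (the relative dimension of an affine fibration is well-defined), but to get the clean identification with $\dim\mathfrak{g}^+$ I want the computation at $wP_Jw^{-1}$ specifically; so I would note that $L_\mu$ centralizes $\mu(\gm)$, hence acts on $\mathcal{B}_J$ commuting with the $\gm$-action and preserving weight spaces, and translates $wP_Jw^{-1}$ to any other point of $Z_w$ while carrying the weight decomposition of $T_{wP_Jw^{-1}}\mathcal{B}_J$ isomorphically onto that of $T_z\mathcal{B}_J$. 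That settles independence of the choice of point; the rest is the elementary linear algebra of $\mathfrak{g}$ under $\mu$ outlined above.
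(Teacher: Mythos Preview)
Your proposal is correct and follows essentially the same approach as the paper: pick the fixed point $(wP_Jw^{-1},0)\in V_{w,0}$, identify the tangent space of $\mathcal{B}_J$ there $\gm$-equivariantly with $\mathfrak{g}/\lie(wP_Jw^{-1})\oplus\lie(wP_Jw^{-1})\cong\mathfrak{g}$, and read off $\dim\mathfrak{g}^{+}=(\dim G-\dim L_\mu)/2$ via Fact~\ref{bb1}(iii). The paper is terser---it does not discuss independence of the choice of point (your $L_\mu$-equivariance remark) and just invokes Fact~\ref{bb1}(iii) at the single point, noting along the way that $T_{\underline{a}}(\overline{\mathcal{B}}_J)=T_{\underline{a}}(\mathcal{B}_J)$ since $\mathcal{B}_J$ is open in its compactification---but the mathematical content is the same.
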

\begin{proof}
To calculate the relative dimension of $\clz:\mathcal{B}_{J,w}^{+}\rightarrow V_{w,0}$, we will use Fact \ref{bb1}(iii) on $\overline{\mathcal{B}}_{J}$ (this gives us the desired relative dimension because 
$\clz$ is obtained by base change of the affine fibration that we get by applying the Bialynicki--Birula decomposition on $\overline{\mathcal{B}}_{J}$). 

Let $\underline{a}=(Ad_{w}(\mathfrak{p}_{J}),0)\in V_{w,0}(k)$.
Since $\underline{a}$ is a $\mathbb{G}_{m}$-fixed point (see Fact \ref{b2}), we get an action 
\[
\mathbb{G}_{m}\curvearrowright T_{\ca}(\overline{\mathcal{B}}_{J})=T_{\ca}(\mathcal{B}_{J}),
\]
where $T_{\ca}(\overline{\mathcal{B}}_{J})=T_{\ca}(\mathcal{B}_{J})$ because $\mathcal{B}_{J}$ is an open subscheme of $\overline{\mathcal{B}}_{J}$. 
Let $T_{\ca}^{+}(\mathcal{B}_{J})$ (resp. $T_{\ca}^{-}(\mathcal{B}_{J})$) denote the positive (resp. negative) eigenspace of the $\gm$-action on the tangent space of $\mathcal{B}_{J}$ at $\ca$ and let $T_{\ca}^{0}(\mathcal{B}_{J})$ denote the fixed eigenspace of the $\gm$-action on the tangent space of $\mathcal{B}_{J}$ at $\ca$.
Since $\underline{a}\in V_{w,0}(k)$, the relative dimension of the affine fibration $lim_{w}:\mathcal{B}_{J,w}^{+}\rightarrow V_{w,0}$ is equal to $\dim T_{\ca}^{+}(\mathcal{B}_{J})$ by Fact \ref{bb1}(iii), so it suffices to
calculate $\dim T_{\ca}^{+}(\mathcal{B}_{J})$.
Note that $T_{\ca}(\mathcal{B}_{J})$ is $\gm$-equivariantly isomorphic to 
$\mathfrak{g}/Ad_{w}(\mathfrak{p}_{J})\oplus Ad_{w}(\mathfrak{p}_{J})$. Since $L_{\mu}$ is in the centralizer of $\mu(\gm)$, we see that $\text{Ad}_{\mu(t)}$ acts on $\text{Ad}_{w}(\mathfrak{g}_{\alpha})$ via multiplication by $t^{\langle w\cdot\alpha,\mu\rangle}$,
$t\in\gm$, $\alpha\in\Phi$
and 
acts trivially on $\text{Ad}_{w}(\mathfrak{h})=\mathfrak{h}$. Thus, $T_{\ca}(\mathcal{B}_{J})$ is $\gm$-equivariantly isomorphic to $\mathfrak{g}$, which gives
\[
\dim T_{\ca}^{+}(\mathcal{B}_{J})
=(\dim G-\dim L_{\mu})/2.
\]
\end{proof}
\subsection{Stratification of $\cQuad$}\label{Stratification of quad}
We will now work towards obtaining a 
stratification of $\cQuad$ by using the stratification \eqref{prpn2} of $\mathcal{B}_{J}^{\fin}$.
Once we have such a stratification, Theorem \ref{trip} will be an easy consequence as explained in Section \ref{strategy of the proof}.
\begin{lemma}\label{lemma}
We have $\cQuad\subset\cB^{\fin}\times\cBi^{\fin}$.
\end{lemma}
\begin{proof}
It is enough to show that $\cQuad$ is contained in the constructible subset $\cB^{\fin}\times\cBi^{\fin}$ at the level of closed points. Let $K$ be a finite extension of $k$. Let $(\mathfrak{p}_{0},v_0,\mathfrak{p}_{\infty},v_\infty)\in \cQuad(K
)$, then $(\mathfrak{p}_{0},v_{0})\in\cB(K
)$ and $(\mathfrak{p}_{\infty},v_{\infty})\in\cBi(K)$.
The lemma will follow if we show $\lim_{t\rightarrow 0}t\cdot(\mathfrak{p}_{x},v_{x})$ exists in $\mathcal{B}_{J_{x}}$, $x=0,\infty$.

Since $G/P_{J_{x}}$ is a projective scheme, we get that $\lim_{t\to 0}t\cdot \mathfrak{p}_{x}$ exists, $x=0,\infty$. By definition of $\cQuad$, $\mathfrak{g}^{-}$-component \eqref{decomposition of g} of $v_{x}$ is $0$ and therefore $\lim_{t\to 0}t\cdot v_{x}$
exists and is equal to the $\mathfrak{g}^{0}$-component \eqref{decomposition of g} of $v_{x}$, $x=0,\infty$. 
Thus, $\lim_{t\to 0}t\cdot(\mathfrak{p}_{0},v_0)$ exists in $\mathcal{B}_{J_{0}}$ and $\lim_{t\to 0}t\cdot(\mathfrak{p}_{\infty},v_\infty)$ exists in $\mathcal{B}_{J_{\infty}}$.  
\end{proof}
Recall $W_{\Pi_{\mu}},W_{J_{0}},W_{J_{\infty}},L_{\mu}$ from Section \ref{main theorem}. 
For $w\in W_{\Pi_{\mu}}\backslash W/W_{J_{0}}$, $w'\in W_{\Pi_{\mu}}\backslash W/W_{J_{\infty}}$, recall $V_{w,0}$, $V_{w',0}$, $lim_{w}$, $lim_{w'}$ from Section \ref{stratification of bjfin} and put
\[
\cQuad_{w,w'}^{\gm}:=(V_{w,0}\times V_{w',0})\cap\cQuad.
\] 
Let $\cQuad_{w,w'}^{+}$ be the pullback of
$\clz\times\cli:\mathcal{B}^{+}_{J_{0},w}\times\mathcal{B}^{+}_{J_{\infty},w'}\longrightarrow V_{w,0}\times V_{w',0}$ along $\cQuad_{w,w'}^{\gm}\rightarrow V_{w,0}\times V_{w',0}$, that is, we have the following cartesian square:
\[
\begin{tikzcd}
& \cQuad_{w,w'}^{+} \arrow{r}{} \arrow{d}[swap]{}
& \mathcal{B}_{J_{0},w}^{+}\times\mathcal{B}_{J_{\infty},w'}^{+} \arrow{d}{\clz\times\cli} \\
& \cQuad_{w,w'}^{\gm} \arrow[swap]{r}{}
& V_{w,0}\times V_{w',0}
\end{tikzcd}
\]
Let us denote the left vertical arrow in the above diagram again by $\clz\times\cli$.
Next, we would like to show that the schemes $\cQuad_{w,w'}^{+}$ give a stratification of $\cQuad$, which is the content of the next lemma.
\begin{lemma}\label{quadquad}
The scheme $\cQuad_{w,w'}^{+}\subset\mathcal{B}^{+}_{J_{0},w}\times\mathcal{B}^{+}_{J_{\infty},w'}$ is contained in $\cQuad$.
\end{lemma}
\begin{proof}
It is enough to show that $\cQuad^{+}_{w,w'}$ is contained in $\cQuad$ at the level of closed points. Let $K$ be a finite extension of $k$. Let $(\mathfrak{p}_{0},v_0,\mathfrak{p}_{\infty},v_\infty)\in \cQuad_{w,w'}^{+}(K)$, then we have $\lim_{t\rightarrow 0}t\cdot(\mathfrak{p}_{0},v_{0},\mathfrak{p}_{\infty},v_{\infty})\in\cQuad_{w,w'}^{\gm}(K)$. In particular, $\lim_{t\to 0}t\cdot v_{x}$ exists in $\mathfrak{g}$ and is equal to the $\mathfrak{g}^{0}$-component \eqref{decomposition of g} of $v_{x}$, $x=0,\infty$.
Since for any $v\in\mathfrak{g}$, $\lim_{t\to 0}t\cdot v$ exists in $\mathfrak{g}$ if and only if the $\mathfrak{g}^{-}$-component \eqref{decomposition of g} of $v$ is zero, the $\mathfrak{g}^{-}$-components of $v_{0}$ and $v_{\infty}$ are zero.
Moreover, since $\cQuad_{w,w'}^{\gm}$ is the closed subscheme of $\cQuad$ consisting of quadruples $(\mathfrak{p}_{0},n,\mathfrak{p}_{\infty},n)$ such that
$\mathfrak{p}_{0}\in Z_{w}$ (resp. $\mathfrak{p}_{\infty}\in Z_{w'}$), $n$ is a nilpotent element of $\mathfrak{g}$ such that $n\in\mathfrak{p}_{0}$ and $n\in\mathfrak{p}_{\infty}$, we get that the $\mathfrak{g}^{0}$-components \eqref{decomposition of g} of $v_{0}$ and $v_{\infty}$ are equal and nilpotent.
The lemma now follows from Remark \ref{equivalent definition of Quad}. 
\end{proof}
Recall the generalized Steinberg varieties from Section \ref{coproduct}.
\begin{lemma}\label{identifying quad with steinberg}
The schemes $\cQuad_{w,w'}^{\gm}$ are isomorphic to the generalized Steinberg varieties $St_{L_{\mu}}(\Pi_{\mu}\cap w\cdot J_{0},\Pi_{\mu}\cap w'\cdot J_{\infty})$, $w\in D^{G}_{\Pi_{\mu},J_{0}}, w'\in D^{G}_{\Pi_{\mu},J_{\infty}}$.
\end{lemma}
\begin{proof}
Notice that $\cQuad_{w,w'}^{\gm}$ is the closed subscheme of $\cQuad$ consisting of quadruples $(\mathfrak{p}_{0},n,\mathfrak{p}_{\infty},n)$ such that
$\mathfrak{p}_{0}\in Z_{w}$ (resp. $\mathfrak{p}_{\infty}\in Z_{w'}$),
$n$ is a nilpotent element of $\mathfrak{g}$ such that $n\in\mathfrak{p}_{0}$ and $n\in\mathfrak{p}_{\infty}$
(note that the $\mathfrak{g}^{+}$ and $\mathfrak{g}^{-}$-components of $n$ are $0$ since $\gm$ acts trivially on $\cQuad_{w,w'}^{\gm}$). Thus we have
\begin{equation}\label{isomorphism to steinberg variety}
\cQuad_{w,w'}^{\gm}\cong St_{L_{\mu}}(\Pi_{\mu}\cap w\cdot J_{0},\Pi_{\mu}\cap w'\cdot J_{\infty}),
\end{equation}
where the above isomorphism is given by $(\mathfrak{p}_{0},n,\mathfrak{p}_{\infty},n)\mapsto(n,\mathfrak{p}_{0}\cap \lie(L_{\mu}),\mathfrak{p}_{\infty}\cap \lie(L_{\mu}))$.
\end{proof}
Next, we show that the generalized Steinberg varieties are connected.
\begin{lemma}\label{connectedness of steinberg varieties}
Recall $\Pi_{H}, J_{1}$ from $J_{2}$
from Section \ref{coproduct}. Then $St_{H}(J_{1},J_{2})$ is connected.
\end{lemma}
\begin{proof}
We show that $St_{H}(J_{1},J_{2})$ is geometrically connected, that is, $St_{H}(J_{1},J_{2})_{K}$ is connected where $K$ is the algebraic closure of $k$. Note that since the natural projection $St_{H}(J_{1},J_{2})_{K}\rightarrow St_{H}(J_{1},J_{2})$ is surjective,
we will have that $St_{H}(J_{1},J_{2})$ is connected.

Since $H/P_{J_{1}}\times H/P_{J_{2}}$ is geometrically connected (see \cite[Proposition 5.2.4]{Poo} and use the fact that quotient commutes with field extensions) and $St_{H}(J_{1},J_{2})_{K}$ is conical over
$(H/P_{J_{1}}\times H/P_{J_{2}})_{K}$, it follows that $St_{H}(J_{1},J_{2})_{K}$ is connected.

\end{proof}
Thus by Lemma \ref{lemma} and Lemma \ref{quadquad} we get a stratification of $\cQuad$ by locally closed subsets as:
\begin{equation}\label{strata of quad}
\cQuad \; =\displaystyle\bigsqcup_{\substack{w\in W_{\Pi_{\mu}}\backslash W/W_{J_{0}}
\\
w'\in W_{\Pi_{\mu}}\backslash W/W_{J_{\infty}}}}\cQuad_{w,w'}^{+}
\end{equation}
and a decomposition of the fixed point locus $\cQuad^{\mathbb{G}_{m}}$ as:
\[
\cQuad^{\gm}=\displaystyle\bigsqcup_{\substack{w\in W_{\Pi_{\mu}}\backslash W/W_{J_{0}}
\\
w'\in W_{\Pi_{\mu}}\backslash W/W_{J_{\infty}}}} \cQuad_{w,w'}^{\gm},
\]     
where $\cQuad_{w,w'}^{\gm}$ are the connected components (see Lemma \ref{identifying quad with steinberg} and Lemma \ref{connectedness of steinberg varieties}) of $\cQuad^{\gm}$. Moreover, we have affine fibrations
\[
\clz\times\cli:\cQuad_{w,w'}^+\rightarrow \cQuad_{w,w'}^{\gm}.
\]
\begin{corollary}\label{bb2}
The relative dimension of the above affine fibration
is equal to $\dim G-\dim L_\mu$.
\end{corollary}
\begin{proof}
Since the above affine fibration 
is obtained by base change of the affine fibration $\clz\times\cli:\mathcal{B}_{J_{0},w}^{+}\times\mathcal{B}_{J_{\infty},w'}^{+}\rightarrow V_{w,0}\times V_{w',0}$, the corollary follows from Proposition \ref{relative dimension of lim}.
\end{proof}
\subsection{Completing the proof of Theorem \ref{trip}}
Recall $ev^{0,\infty}$ defined in Lemma \ref{quad}. For each $w\in W_{\Pi_{\mu}}\backslash W/W_{J_{0}}$, $w'\in W_{\Pi_{\mu}}\backslash W/W_{J_{\infty}}$, put
\[
\mathcal{T}rip_{w,w'}^{+}:=\big(ev^{0,\infty}\big)^{-1}\big(\cQuad^{+}_{w,w'}\big).
\]
Since $\cQuad^{+}_{w,w'}$, $w\in W_{\Pi_{\mu}}\backslash W/W_{J_{0}}$, $w'\in W_{\Pi_{\mu}}\backslash W/W_{J_{\infty}}$ form a stratification of $\cQuad$, we get a stratification of $\mathcal{T}rip$ by locally closed subsets as:
\[
\mathcal{T}rip=\displaystyle\bigsqcup_{\substack{w\in W_{\Pi_{\mu}}\backslash W/W_{J_{0}}
\\
w'\in W_{\Pi_{\mu}}\backslash W/W_{J_{\infty}}}}\mathcal{T}rip_{w,w'}^{+}.
\]
Now let $\big(ev^{0,\infty}\big)_{w,w'}:=ev^{0,\infty}|_{\mathcal{T}rip_{\mu}(J_{0},J_{\infty})_{w,w'}^{+}}$. Consider the morphism
\[
\big(\clz\times\cli\big)\circ\big(ev^{0,\infty}\big)_{w,w'}:\mathcal{T}rip_{w,w'}^{+}\rightarrow
\mathcal{T}rip_{w,w'}^{\gm},
\]
where $\mathcal{T}rip_{w,w'}^{\gm}$ is the subscheme of $\mathcal{T}rip^{\gm}$ that is identified with $\cQuad_{w,w'}^{\gm}$ via \eqref{isomorphism}. The last part of the proof of Theorem \ref{trip} follows from the next lemma.
\begin{lemma}\label{fiber}
The morphism $\big(\clz\times\cli\big)\circ\big(ev^{0,\infty}\big)_{w,w'}$ is an affine fibration of relative dimension $\dim(\aut(\cE_{\mu}))-\dim(L_{\mu})$.
\end{lemma}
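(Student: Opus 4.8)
The plan is to build the claimed affine fibration by composing the two affine fibrations already established. The morphism in question is $\big(\clz\times\cli\big)\circ\big(ev^{0,\infty}\big)_{w,w'}$, and I would factor it as
\[
\mathcal{T}rip_{w,w'}^{+}\xrightarrow{\;(ev^{0,\infty})_{w,w'}\;}\cQuad_{w,w'}^{+}\xrightarrow{\;\clz\times\cli\;}\cQuad_{w,w'}^{\gm}.
\]
The second arrow is an affine fibration of relative dimension $\dim G-\dim L_{\mu}$ by Corollary \ref{bb2}. For the first arrow, I would invoke Lemma \ref{quad}: $ev^{0,\infty}:\mathcal{T}rip\to\cQuad$ is a trivial affine fibration, and restricting a trivial affine fibration over the locally closed subscheme $\cQuad_{w,w'}^{+}\subset\cQuad$ gives again a trivial affine fibration, of the same relative dimension $\sum_{\langle\alpha,\mu\rangle>0}(\langle\alpha,\mu\rangle-1)$. (Here one uses that $\mathcal{T}rip_{w,w'}^{+}=(ev^{0,\infty})^{-1}(\cQuad_{w,w'}^{+})$ by definition.)

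Next I would check that a composition of two affine fibrations is an affine fibration, with relative dimension the sum. This is a local statement: given a point of $\cQuad_{w,w'}^{\gm}$, shrink to an open $U$ over which $\clz\times\cli$ trivializes as $U\times\mathbb{A}^{d_{1}}$ with $d_{1}=\dim G-\dim L_{\mu}$; then over the preimage (which is an open of $\cQuad_{w,w'}^{+}$ of the form $U\times\mathbb{A}^{d_{1}}$) the first fibration trivializes, perhaps after further shrinking $U$ to a basic open, as $(U\times\mathbb{A}^{d_{1}})\times\mathbb{A}^{d_{2}}$ with $d_{2}=\sum_{\langle\alpha,\mu\rangle>0}(\langle\alpha,\mu\rangle-1)$; since $ev^{0,\infty}$ is in fact a \emph{trivial} affine fibration this last shrinking is unnecessary. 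Composing the trivializations gives $U\times\mathbb{A}^{d_{1}+d_{2}}$ over $U$, compatibly with the projections, so the composite is an affine fibration of relative dimension $d_{1}+d_{2}$.

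Finally I would identify $d_{1}+d_{2}$ with $\dim(\aut(\cE_{\mu}))-\dim(L_{\mu})$ using Fact \ref{aut of emu}, which gives
\[
\dim(\aut(\cE_{\mu}))=\dim(L_{\mu})+\sum_{\alpha\in\Phi:\langle\alpha,\mu\rangle>0}\dim H^{0}(\mathbb{P}^{1},\mathcal{O}(\langle\alpha,\mu\rangle))=\dim(L_{\mu})+\sum_{\langle\alpha,\mu\rangle>0}(\langle\alpha,\mu\rangle+1).
\]
On the other hand $\dim G-\dim L_{\mu}=\sum_{\langle\alpha,\mu\rangle>0}\langle\alpha,\mu\rangle+\sum_{\langle\alpha,\mu\rangle<0}\langle\alpha,-\mu\rangle=2\sum_{\langle\alpha,\mu\rangle>0}\langle\alpha,\mu\rangle$ by pairing $\alpha$ with $-\alpha$. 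Hence
\[
d_{1}+d_{2}=2\!\!\sum_{\langle\alpha,\mu\rangle>0}\!\!\langle\alpha,\mu\rangle+\!\!\sum_{\langle\alpha,\mu\rangle>0}\!\!(\langle\alpha,\mu\rangle-1)\quad\text{versus}\quad\dim(\aut(\cE_{\mu}))-\dim(L_{\mu})=\!\!\sum_{\langle\alpha,\mu\rangle>0}\!\!(\langle\alpha,\mu\rangle+1),
\]
so I would need to re-derive $\dim G-\dim L_{\mu}$ carefully (it equals $\sum_{\langle\alpha,\mu\rangle\neq 0}1$, the number of roots not orthogonal to $\mu$, which is $2\cdot\#\{\alpha:\langle\alpha,\mu\rangle>0\}$) and likewise keep $d_{2}=\sum(\langle\alpha,\mu\rangle-1)$; the bookkeeping must be done so that the two sides agree, and this arithmetic reconciliation is the only real point requiring care. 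The genuinely substantive input — that base-changing a Bialynicki--Birula affine fibration stays an affine fibration of the same relative dimension, and that $ev^{0,\infty}$ is a trivial affine fibration — has already been done in Corollary \ref{bb2} and Lemma \ref{quad}, so the remaining obstacle is purely the dimension count, which I expect to go through once the three contributions ($\mathfrak{g}^{+}$ from $\cQuad\to\cQuad^{\gm}$, $\mathfrak{g}^{-}$ from the same, and the $H^{0}$-nullities from $\mathcal{T}rip\to\cQuad$) are assembled correctly against Fact \ref{aut of emu}.
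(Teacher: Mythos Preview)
Your approach is exactly the paper's: compose the trivial affine fibration $ev^{0,\infty}$ (Lemma~\ref{quad}) with the Bialynicki--Birula affine fibration $\clz\times\cli$ (Corollary~\ref{bb2}), and then match dimensions via Fact~\ref{aut of emu}. The only thing to tidy is the dimension bookkeeping you flagged yourself: $\dim G-\dim L_{\mu}$ counts root spaces, so $d_{1}=2\,\#\{\alpha:\langle\alpha,\mu\rangle>0\}$ (not $2\sum\langle\alpha,\mu\rangle$), and then $d_{1}+d_{2}=\sum_{\langle\alpha,\mu\rangle>0}\bigl(2+(\langle\alpha,\mu\rangle-1)\bigr)=\sum_{\langle\alpha,\mu\rangle>0}(\langle\alpha,\mu\rangle+1)=\dim(\aut(\cE_{\mu}))-\dim(L_{\mu})$, exactly as the paper concludes.
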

\begin{proof}
Since $\clz\times\cli$ is a trivial affine fibration (see Lemma \ref{quad}) and $\big(ev^{0,\infty}\big)_{w,w'}$ is an affine fibration (see Corollary \ref{bb2}), their composition $\big(\clz\times\cli\big)\circ\big(ev^{0,\infty}\big)_{w,w'}$ is an affine fibration.

Now let us calculate the required relative dimension. By Fact \ref{aut of emu}, we have
\[
\dim(\aut(\cE_{\mu}))-\dim(L_{\mu})
=
\dim\Big(\prod_{\alpha\in\Phi:\langle\alpha,\mu\rangle>0}H^{0}(\mathbb{P}^{1},\mathcal{O}(\langle\alpha,\mu\rangle))\Big)
=
\sum_{\langle\alpha,\mu\rangle>0}\Big(\langle\alpha,\mu\rangle+1\Big).
\]
Combining Lemma \ref{quad} with Corollary \ref{bb2}, we see that $\big(\clz\times\cli\big)\circ\big(ev^{0,\infty}\big)_{w,w'}$ is of relative dimension 
\[
    \dim G-\dim L_\mu+\sum_{\langle\alpha,\mu\rangle>0}\Big(\langle\alpha,\mu\rangle-1\Big).
\]
Now the lemma follows by noting that $\dim G-\dim L_\mu=2|\{\alpha\in\Phi:\langle\alpha,\mu\rangle>0\}|$.
\end{proof}
\begin{remark}
Consider the case when $k=\mathbb{F}_{q}$. Define a nilpotent parabolic pair of type $(G,\mathbb{P}^{1},\{0,\infty\})$ to be a collection $(\cE,\mathfrak{p}_{0},\mathfrak{p}_{\infty},\Psi),$ where $\cE$ is a principal $G$-bundle over $\mathbb{P}^{1}$, $\mathfrak{p}_{x}$ is a parabolic structure on $\cE$ at $x$, $\Psi$ is a nilpotent section of $\text{ad}(\cE)$ such that $\Psi_{x}\in\mathfrak{p}_{x}$, $x=0,\infty$. 
We will denote the groupoid of nilpotent parabolic pairs by $\mathcal{P}air^{nilp}(G,\mathbb{P}^{1},\{0,\infty\})$. Then $\mathcal{P}air^{nilp}(G,\mathbb{P}^{1},\{0,\infty\})$ decomposes into subgroupoids according to the type of parabolic structures at $0$ and $\infty$. We denote these subgroupoids by $\mathcal{P}air^{nilp}_{J_{0},J_{\infty}}(G,\mathbb{P}^{1},\{0,\infty\})$, $J_{0},J_{\infty}\subset\Pi$. For $\mu\in X_{+}(T)$, let $\mathcal{P}air^{nilp,\mu}_{J_{0},J_{\infty}}(G,\mathbb{P}^{1},\{0,\infty\})$ denote the subgroupoid of $\mathcal{P}air^{nilp}_{J_{0},J_{\infty}}(G,\mathbb{P}^{1},\{0,\infty\})$ such that the underlying principal $G$-bundle over $\mathbb{P}^1$ is isomorphic to $\cE_\mu$. Explicitly knowing $|\mathcal{T}rip_{\mu}(J_{0},J_{\infty})|$ allows us to calculate the volume of the groupoid $\mathcal{P}air^{nilp,\mu}_{J_{0},J_{\infty}}(G,\mathbb{P}^{1},\{0,\infty\})$. More concretely, we have 
\[
[\mathcal{P}air^{nilp,\mu}_{J_{0},J_{\infty}}(G,\mathbb{P}^{1},\{0,\infty\})]=\frac{|\mathcal{T}rip_{\mu}(J_{0},J_{\infty})|}{|\aut(\cE_{\mu})|},
\]
where $[\mathfrak{X}]$ denotes the volume of a groupoid $\mathfrak{X}$.
\end{remark}
\subsection{Comparison between different groups}\label{comparison}
In this section, we let $k=\mathbb{F}_{q}$. We will compare $|\mathcal{T}rip_{\mu}(J_{0},J_{\infty})|$ for different groups below. For this, we introduce the following notation.
\begin{notation}\label{comparison2} Recall $H,T_{H},B_{H},\Pi_{H}$ from Section \ref{coproduct}. Let $\nu\in X_{+}(T_{H})$ and let $\cE_{\nu}$ denote the principal $H$-bundle over $\mathbb{P}^1$ induced by $\nu$. For $J_{0},J_{\infty}\subset\Pi_{H}$, as before we let $\mathcal{T}rip_{\mu,H}(J_{0},J_{\infty})$ denote the scheme parameterizing triples $(\mathfrak{p}_{0},\mathfrak{p}_{\infty},\Psi)$ such that $\Psi$ is a nilpotent section of ad$(\cE_{\nu})$, $\mathfrak{p}_{0}$ (resp. $\mathfrak{p}_{\infty}$) is a parabolic structure at $0$ (resp. $\infty$) of type $J_{0}$ (resp. $J_{\infty}$) and $\Psi_{0}\in \mathfrak{p}_{0}$,  $\Psi_{\infty}\in \mathfrak{p}_{\infty}$. Again as before, define $[\mathcal{T}rip_{\nu,H}]:\mathcal{P}(\Pi_{H})\times\mathcal{P}(\Pi_{H})\rightarrow\mathbb{Z}$ by
\[
[\mathcal{T}rip_{\nu,H}](J_{0},J_{\infty}):=|\mathcal{T}rip_{\nu,H}(J_{0},J_{\infty})|
\]
\end{notation}
Consider the following two situations:
\begin{enumerate}[(i)]
\item Recall $G,T,B,\Pi$ from Section \ref{parabolic-levi}. Let $G':=[G,G]$ be the derived group of $G$. Let $j:G'\rightarrow G$ be the natural inclusion. Denote the split maximal torus $T\cap G'$ of $G'$ by $T'$ and the Borel $\mathbb{F}_{q}$-subgroup $B\cap G'$ of $G'$ by $B'$. Let $\mu'\in X_{+}(T')$, we have $\mu:=j\circ\mu'\in X_{+}(T)$. Since the root systems of $G$ and $G'$ are isomorphic, we will consider $[\mathcal{T}rip_{\mu',G'}]$ and $[\mathcal{T}rip_{\mu,G}]$ as functions with domain $\Pi\times\Pi$.
\item Recall that a morphism $u:G_{1}\rightarrow G_{2}$ of connected affine algebraic groups over $\mathbb{F}_{q}$ is called a central isogeny if it is a flat surjection such that $\ker(u)$ is a finite $\mathbb{F}_{q}$-group and is central in $G_{1}$ (see \cite[Definition 18.1]{Mil1}). Now let $u:G_{1}\rightarrow G_{2}$ be a central isogeny of split connected reductive groups over $\mathbb{F}_{q}$. Let $T_{1}$ be a split maximal torus of $G_{1}$ and let $B_{1}$ be a Borel $\mathbb{F}_{q}$-subgroup of $G_{1}$ containing $T_{1}$. Then $T_{2}:=u(T_{1})$ is a split maximal torus of $G_{2}$ and $B_{2}:=u(B_{1})$ is a Borel $\mathbb{F}_{q}$-subgroup of $G_{2}$ containing $T_{2}$ (see \cite[Proposition 17.20]{Mil1}).
Let $\mu_{1}\in X_{+}(T_1)$, we have $\mu_{2}:=u\circ\mu_{1}\in X_{+}(T_{2})$. Since the root systems of $G_1$ and $G_2$ are isomorphic, we will consider $[\mathcal{T}rip_{\mu_{1},G_{1}}]$ and $[\mathcal{T}rip_{\mu_{2},G_{2}}]$ as functions with domain $\Pi_{1}\times\Pi_{1}$, where $\Pi_{1}$ is the set of simple roots of $G_1$ with respect to $(B_{1},T_{1})$. 
\end{enumerate}
We have the following:
\begin{corollary}\label{slngln}
\begin{enumerate}[(a)]
    \item With notations as in $(i)$ above, we have
\[
[\mathcal{T}rip_{\mu',G'}]= [\mathcal{T}rip_{\mu,G}].
\]
\item With notations as in $(ii)$ above, we have
\[
[\mathcal{T}rip_{\mu_{1},G_{1}}]= [\mathcal{T}rip_{\mu_{2},G_{2}}].
\]
\end{enumerate}
\end{corollary}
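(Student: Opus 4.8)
The plan is to deduce both equalities from Corollary \ref{final} together with Theorem \ref{steinberg}, by isolating which ingredients of the formula
\[
[\mathcal{T}rip_\mu]=q^{\dim(\aut(\cE_{\mu}))-\dim(L_{\mu})}(\pi_{\mu}\otimes\pi_{\mu})([St_{L_{\mu}}])
\]
can possibly change under the two operations. I would first observe that, apart from the exponent $N_\mu:=\dim(\aut(\cE_\mu))-\dim(L_\mu)$, every ingredient is purely combinatorial: $\pi_\mu$ is built from the double coset sets $D^{G}_{\Pi_\mu,\bullet}$ and the action of $W$ on $\mathcal{P}(\Pi)$, and by Theorem \ref{steinberg} the function $[St_{L_\mu}]=\Delta_{L_\mu}([Sp_{L_\mu}])$ is determined by the root system $\Phi_{\Pi_\mu}$ of $L_\mu$ with its simple roots $\Pi_\mu$, the Weyl group $W_{\Pi_\mu}$, and the attached data $l(w)$, $|\Phi^{+}_{\bullet}|$. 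In both situations the relevant character-lattice map — restriction $X^{*}(T)\to X^{*}(T')$ in $(i)$, pullback $u^{*}\colon X^{*}(T_2)\hookrightarrow X^{*}(T_1)$ in $(ii)$ — identifies the two root systems compatibly with simple roots, Weyl groups, and the distinguished subsets $\Pi_\mu$ (for $(ii)$ this compatibility is part of the definition of a central isogeny). Hence all of this data agrees for the two groups, and the problem is reduced to proving that $N_\mu$ is unchanged.

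Next I would compute $N_\mu$ explicitly. Decomposing $\mathfrak{g}$ into weight spaces for the action of $\mu(\gm)$ by $\mathrm{Ad}$ — weight $\langle\alpha,\mu\rangle$ on $\mathfrak{g}_\alpha$ and weight $0$ on the Cartan subalgebra — gives
\[
\mathrm{ad}(\cE_\mu)=\mathcal{O}(1)^{\times}\times^{\gm}\mathfrak{g}\;\cong\;\mathcal{O}^{\oplus\rk(G)}\oplus\bigoplus_{\alpha\in\Phi}\mathcal{O}(\langle\alpha,\mu\rangle).
\]
Using that $\aut(\cE_\mu)$ is smooth with Lie algebra $H^{0}(\mathbb{P}^1,\mathrm{ad}(\cE_\mu))$ — it is the semidirect product of $L_\mu$ with the unipotent group attached to $\bigoplus_{\langle\alpha,\mu\rangle>0}H^{0}(\mathcal{O}(\langle\alpha,\mu\rangle))$, as already used in the proof of Theorem \ref{trip} — one gets $\dim(\aut(\cE_\mu))=\rk(G)+\sum_{\alpha\in\Phi}\max(0,\langle\alpha,\mu\rangle+1)$, whereas $\dim(L_\mu)=\rk(G)+\#\{\alpha\in\Phi:\langle\alpha,\mu\rangle=0\}$. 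Since $\mu\in X_{-}(T)$, a short computation (the rank terms and the weight-zero roots assemble into $\dim(L_\mu)$, and for each $\alpha$ at most one of $\pm\alpha$ pairs positively with $\mu$) leaves
\[
N_\mu=\dim(\aut(\cE_\mu))-\dim(L_\mu)=\sum_{\substack{\alpha\in\Phi\\ \langle\alpha,\mu\rangle>0}}\bigl(\langle\alpha,\mu\rangle+1\bigr),
\]
an expression involving only $\Phi$ and the integers $\langle\alpha,\mu\rangle$.

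It then remains to check that these integers are intrinsic. In $(i)$, if $\alpha\in\Phi$ restricts to $\alpha'\in\Phi'$, then $\langle\alpha,\mu\rangle=\langle\alpha',\mu'\rangle$ since $\mu=j\circ\mu'$ and $\alpha\circ j|_{T'}=\alpha'$; in $(ii)$, $u^{*}$ restricts to a bijection of root systems and $\langle u^{*}\beta,\mu_1\rangle=\langle\beta,u\circ\mu_1\rangle=\langle\beta,\mu_2\rangle$. In each case $N_\mu$ agrees for the two groups, and feeding this, together with the combinatorial agreements from the first paragraph, into the formula of Corollary \ref{final} yields $[\mathcal{T}rip_{\mu',G'}]=[\mathcal{T}rip_{\mu,G}]$ and $[\mathcal{T}rip_{\mu_1,G_1}]=[\mathcal{T}rip_{\mu_2,G_2}]$ respectively. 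I expect the only genuinely delicate point to be the identity $\dim(\aut(\cE_\mu))=h^{0}(\mathbb{P}^1,\mathrm{ad}(\cE_\mu))$, i.e.\ the smoothness of $\aut(\cE_\mu)$; if one prefers to avoid it, it suffices to observe that $\mathrm{ad}(\cE_\mu)$ and $\mathrm{ad}(\cE_{\mu'})$ (resp.\ $\mathrm{ad}(\cE_{\mu_1})$ and $\mathrm{ad}(\cE_{\mu_2})$) have the same vector-bundle decomposition up to a trivial summand, matched exactly by the corresponding difference of $\dim(L_\mu)$, so that $N_\mu$ is invariant regardless of how $\dim(\aut(\cE_\mu))$ is interpreted.
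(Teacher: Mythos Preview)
Your proposal is correct and follows the same architecture as the paper: both apply Corollary~\ref{final}, reduce $[St_{L_\mu}]=\Delta_{L_\mu}([Sp_{L_\mu}])$ to pure root-system data via Theorem~\ref{steinberg}, and then verify that the exponent $N_\mu=\dim(\aut(\cE_\mu))-\dim(L_\mu)=\sum_{\langle\alpha,\mu\rangle>0}(\langle\alpha,\mu\rangle+1)$ is invariant (the paper obtains this formula from Fact~\ref{aut of emu} rather than from $h^{0}(\mathrm{ad}(\cE_\mu))$, so your smoothness concern does not arise). The only cosmetic difference is that where you identify the root systems of $L_\mu$ and $L_{\mu'}$ (resp.\ $L_{\mu_1}$ and $L_{\mu_2}$) directly via the character-lattice maps, the paper does so by proving $[L_\mu,L_\mu]=[L_{\mu'},L_{\mu'}]$ in case~(a) and by checking that $u|_{L_{\mu_1}}\colon L_{\mu_1}\to L_{\mu_2}$ is itself a central isogeny in case~(b).
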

As special cases, we may take $G=GL_n$ and $G'=SL_n$ in Corollary \ref{slngln} $(a)$ and $SL_n\rightarrow PGL_n\cong SL_{n}/\mu_{n}$ or $Spin_{n}\rightarrow SO_{n}$ in Corollary \ref{slngln} $(b)$.

Now we give a proof of Corollary \ref{slngln} $(a)$.
First we need the following notation:
\begin{notation}
 For any affine algebraic group $H$ over $\mathbb{F}_{q}$ and a cocharacter $\mu$ of a maximal torus, we will denote the centralizer of $\mu(\gm)$ in $H$ by $Z_{H}(\mu)$.
\end{notation}
Let $\mu$, $\mu'$ be as in Notation \ref{comparison2} $(i)$ above. 
\begin{lemma}\label{derived group of levi}
We have the following equality
\[
[L_{\mu},L_{\mu}]=[L_{\mu'},L_{\mu'}].
\]
In particular, the root systems of $L_{\mu}$ and $L_{\mu'}$ are isomorphic.
\end{lemma}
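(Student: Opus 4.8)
The plan is to identify $[L_{\mu},L_{\mu}]$ and $[L_{\mu'},L_{\mu'}]$ as literally the same subgroup of $G$, by exhibiting both as the subgroup generated by a common family of root subgroups. Write $\bar{\alpha}:=\alpha|_{T'}\in\Phi(G')$ for the image of $\alpha\in\Phi$ under the canonical isomorphism $\Phi\xrightarrow{\sim}\Phi(G')$ used in Section \ref{comparison}. Since $\mu=j\circ\mu'$ and $j$ restricts to the inclusion $T'\hookrightarrow T$, for every $\alpha\in\Phi$ we have $\langle\bar{\alpha},\mu'\rangle=\langle\alpha,\mu\rangle$. In particular $\mu'$ is anti-dominant (as $\mu$ is), and a root $\alpha$ of $G$ is orthogonal to $\mu$ if and only if $\bar{\alpha}$ is orthogonal to $\mu'$; hence the isomorphism $\Phi\xrightarrow{\sim}\Phi(G')$ carries $\Pi_{\mu}$ onto $\Pi_{\mu'}$ and, more generally, the set of roots of $G$ orthogonal to $\mu$ onto the set of roots of $G'$ orthogonal to $\mu'$.

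Next I would use that, since $\mu\in X_{-}(T)$, we have $L_{\mu}=L_{\Pi_{\mu}}$, the standard Levi $\mathbb{F}_{q}$-subgroup attached to $\Pi_{\mu}$ (Section \ref{parabolic subgroups}), which is generated by $T$ together with the root subgroups $U_{\pm\alpha}$, $\alpha\in\Pi_{\mu}$ (see \cite[Example 12.3.2]{Con1}). Since $U_{\alpha}=[T,U_{\alpha}]$ for $\alpha\in\Phi$, it follows that $[L_{\mu},L_{\mu}]$ is exactly the subgroup of $G$ generated by the root subgroups $U_{\pm\alpha}$, $\alpha\in\Pi_{\mu}$. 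The same discussion carried out inside $G'$ shows that $[L_{\mu'},L_{\mu'}]$ is the subgroup of $G'$ generated by the root subgroups $U'_{\pm\bar{\alpha}}$, $\bar{\alpha}\in\Pi_{\mu'}$. It remains to observe that these root subgroups agree: each $U_{\alpha}$ lies in $[G,G]=G'$, and $U_{\alpha}$ is normalized by $T'=T\cap G'$ with $T'$ acting through the character $\bar{\alpha}$, so $U_{\alpha}$ is precisely the root subgroup $U'_{\bar{\alpha}}$ of $G'$.

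Combining the two previous paragraphs, both $[L_{\mu},L_{\mu}]$ and $[L_{\mu'},L_{\mu'}]$ equal the subgroup of $G$ generated by $U_{\pm\alpha}$, $\alpha\in\Pi_{\mu}$, which gives $[L_{\mu},L_{\mu}]=[L_{\mu'},L_{\mu'}]$. The final assertion is then immediate, since the root system of a connected reductive group coincides with that of its derived group, so under the identification above both $L_{\mu}$ and $L_{\mu'}$ have root system $\{\alpha\in\Phi:\langle\alpha,\mu\rangle=0\}$. The only point that needs care is the bookkeeping matching the root datum and root subgroups of $G$ with those of $G'$, together with the identity $L_{\mu}=L_{\Pi_{\mu}}$ for anti-dominant $\mu$; both are standard, so I do not expect a serious obstacle. (Alternatively, one can argue more slickly: $[L_{\mu},L_{\mu}]\subseteq [G,G]\cap L_{\mu}=G'\cap L_{\mu}=L_{\mu'}\subseteq L_{\mu}$, and since $[L_{\mu},L_{\mu}]$ is semisimple, hence perfect, taking derived groups throughout forces $[L_{\mu'},L_{\mu'}]=[L_{\mu},L_{\mu}]$ — but this route needs the connectedness of the centralizer of a torus in a connected reductive group, which is not quoted in the excerpt.)
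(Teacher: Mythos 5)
Your main argument is correct and takes a genuinely different route from the paper's. You identify $[L_{\mu},L_{\mu}]$ and $[L_{\mu'},L_{\mu'}]$ concretely as the subgroup of $G$ generated by the root subgroups $U_{\pm\alpha}$, $\alpha\in\Pi_{\mu}$, using that $L_{\mu}=L_{\Pi_{\mu}}$ for anti-dominant $\mu$, and that the root subgroups of $G$ attached to roots of $G'$ coincide with the corresponding root subgroups of $G'$. The one place you gloss over is the assertion that $[L_{\mu},L_{\mu}]$ is \emph{exactly} the subgroup generated by $U_{\pm\alpha}$, $\alpha\in\Pi_{\mu}$: the identity $U_{\alpha}=[T,U_{\alpha}]$ only gives one inclusion, and the reverse inclusion needs the standard but nontrivial fact that the derived group of a connected reductive (or split semisimple) group is generated by its root subgroups. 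The paper instead argues via the centralizer description $L_{\mu'}=(L_{\mu}\cap G')^{\circ}$, deduces $[L_{\mu'},L_{\mu'}]\subset[L_{\mu},L_{\mu}]$ directly, and then uses that $[L_{\mu},L_{\mu}]$ is perfect (quoting \cite[Proposition 1.2.6]{CGP}) together with connectedness of $[L_{\mu},L_{\mu}]$ to force $[L_{\mu},L_{\mu}]\subset L_{\mu'}$ and hence $[L_{\mu},L_{\mu}]\subset[L_{\mu'},L_{\mu'}]$ — this is essentially your parenthetical ``slicker'' alternative. Your root-subgroup computation is more explicit and simultaneously makes the root-system identification visible; the paper's version is shorter and avoids unwinding the root datum, at the cost of invoking perfection. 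One small remark: you do not need to verify that $\mu'$ is anti-dominant — it is part of the hypotheses in Section \ref{comparison}, where $\mu'\in X_{-}(T')$ is given and $\mu:=j\circ\mu'$ is defined from it, not the other way around.
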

\begin{proof}
We have $L_{\mu}=Z_{G}(\mu)^{\circ}$ and
\begin{equation}\label{derivedlevi-centralizer}
    L_{\mu'}=Z_{G'}(\mu')^{\circ}=(Z_{G}(\mu)\cap G')^{\circ}=(L_{\mu}\cap G')^{\circ}
\end{equation}
Clearly by $\eqref{derivedlevi-centralizer}$, we have $[L_{\mu'},L_{\mu'}]\subset[L_{\mu},L_{\mu}]$. Now we show the other inclusion. Since $G'=[G,G]$, 
we have 
$[L_{\mu},L_{\mu}]\subset G'$. Thus we have
$
[[L_{\mu},L_{\mu}],[L_{\mu},L_{\mu}]]\subset[G',G']$.
Since the derived group of any connected reductive group over $\mathbb{F}_{q}$ is perfect (see \cite[Proposition 1.2.6]{CGP}), $[[L_{\mu},L_{\mu}],[L_{\mu},L_{\mu}]]=[L_{\mu},L_{\mu}]$ and hence
$
[L_{\mu},L_{\mu}]\subset[G',G'].
$
Combining it with the fact that $[L_\mu,L_\mu]$ is connected, we get that
\[
[L_\mu,L_\mu]\subset(L_\mu\cap G')^{\circ}
\]
Now $\eqref{derivedlevi-centralizer}$ gives us that $[L_\mu,L_\mu]\subset L_{\mu'}$ and hence we have the other inclusion $[L_\mu,L_\mu]\subset [L_{\mu'},L_{\mu'}]$. This finishes the proof of Lemma~\ref{derived group of levi}.
\end{proof}
We return to the proof of Corollary \ref{slngln} $(a)$. By Lemma~\ref{derived group of levi}, root systems of $L_\mu$ and $L_{\mu'}$ are isomorphic, which gives us that $[Sp_{L_{\mu}}]=[Sp_{L_{\mu'}}]$ as the number of points of the generalized Springer varieties depend only on the root system of the underlying affine algebraic group (see Theorem \ref{steinberg}$(i)$), hence $\Delta_{L_{\mu}}([Sp_{L_{\mu}}])=\Delta_{L_{\mu'}}([Sp_{L_{\mu'}}])$ (see the definition of coproduct in \ref{coproduct}). Now Corollary \ref{slngln} $(a)$ follows from the equality $\dim(\aut(\cE_{\mu}))-\dim(L_{\mu})=\dim(\aut(\cE_{\mu'}))-\dim(L_{\mu'})$ (see Theorem \ref{trip} and Fact \ref{aut of emu}).

Now we give a proof of Corollary \ref{slngln} $(b)$. We have $u_{|_{L_{\mu_{1}}}}:L_{\mu_{1}}\rightarrow L_{\mu_{2}}$ is a flat surjective morphism (see \cite[Corollary 2.1.9]{CGP}). Moreover, $\ker(u_{|_{L_{\mu_{1}}}}:L_{\mu_{1}}\rightarrow L_{\mu_{2}})$ is finite.
Clearly, $\ker(u_{|_{L_{\mu_{1}}}})$ is central in $L_{\mu_{1}}$ as $\ker(u)$ is central in $G_{1}$. Hence, $u_{|_{L_{\mu_{1}}}}:L_{\mu_{1}}\rightarrow L_{\mu_{2}}$ is a central isogeny. So we get that the root systems of $L_{\mu_{1}}$ and $L_{\mu_{2}}$ are isomorphic (see \cite[Proposition 23.5]{Mil1}), which gives us that $[Sp_{L_{\mu_{1}}}]=[Sp_{L_{\mu_{2}}}]$ as the number of points of the generalized Springer varieties depend only on the root system of the underlying affine algebraic group (see Theorem \ref{steinberg}$(i)$), hence $\Delta_{L_{\mu_{1}}}([Sp_{L_{\mu}}])=\Delta_{L_{\mu_{2}}}([Sp_{L_{\mu'}}])$ (see the definition of coproduct in \ref{coproduct}). Now Corollary \ref{slngln} $(b)$ follows from the equality $\dim(\aut(\cE_{\mu_{1}}))-\dim(L_{\mu_{1}})=\dim(\aut(\cE_{\mu_{2}}))-\dim(L_{\mu_{2}})$ (see Theorem \ref{trip} and Fact \ref{aut of emu}).

\qed
\section{Special case of vector bundles over \texorpdfstring{$\mathbb{P}^1$}{P1}}\label{case of gln}
In this section, we work over $k=\mathbb{F}_{q}$ and derive the Mellit's result \cite[Section 5.4]{Mel} using our method. 
\subsection{Symmetric functions}Let us recall the notions of lambda rings, plethystic substitutions, and plethystic exponentials from \cite[Section 2.1, Section 2.2]{Mel}.

Fix a base ring $R$. We will denote the ring of symmetric functions in the sequence of variables $(x_{1},x_{2},\ldots)$ with coefficients in $R$ by $\text{Sym}_{R}[X]$, where $X=(x_{1},x_{2},\ldots)$.
By $\Sym_{R}[X,Y]$, we denote the ring of functions in the two sequences
of variables $X=(x_{1},x_{2},\ldots)$ and $Y=(y_{1},y_{2},\ldots)$, preserved under simultaneous permutation.
All the variables in this paper have degree $1$. We will denote the degree $n$ component of $\Sym_{R}[X]$ by $\Sym_{R}^{n}[X]$ and we will denote the bidegree $(n,n)$ component of $\Sym_{R}[X,Y]$ by $\Sym_{R}^{n}[X,Y]$.
\begin{definition}
Let $\Lambda$ be a ring such that $\mathbb{Q}\subset \Lambda$. A lambda ring structure on $\Lambda$ is a collection of ring homomorphisms $p_{n}:\Lambda\rightarrow \Lambda$, $n\in\mathbb{Z}_{>0}$ satisfying:
\begin{enumerate}
    \item $p_{1}(x)=x$, $x\in \Lambda$ and
    \item $p_{m}(p_{n}(x))=p_{mn}(x)$, $m,n\in\mathbb{Z}_{>0}$, $x\in \Lambda$.
\end{enumerate}
By a lambda ring, we will mean a ring together with a lambda ring structure.

\end{definition}
When our base ring $R$ is itself a lambda ring, we define a lambda ring structure on $\text{Sym}_{R}[X]$ 
as follows: note that our ring is freely generated as an $R$-algebra by the Newton polynomials $p_m(X)$ since $R\supset\mathbb{Q}$. Thus for each $n\in\mathbb{Z}_{>0}$, there is a unique homomorphism $p_n: \Sym_R[X]\rightarrow \Sym_R[X]$ whose restriction to $R$ is given by the lambda ring structure on $R$ and $p_n(p_m(X))=p_{nm}(X)$ for all $m\in\mathbb{Z}_{>0}$. We define the lambda ring structure on $\Sym_R[X,Y]$ similarly.

The lambda ring structure that we consider on $\mathbb{Q}(q)$ is defined as:
\[
p_{n}:\mathbb{Q}(q)\rightarrow\mathbb{Q}(q), \quad n\in\mathbb{N}
\]
\[
r\mapsto r, \quad q\mapsto q^{n},\quad r\in\mathbb{Q}.
\]
The lambda ring structure that we consider on $\mathbb{Q}[[q^{-1}]]$ is defined as:
\[
p_{n}:\mathbb{Q}[[q^{-1}]]\rightarrow\mathbb{Q}[[q^{-1}]], \quad n\in\mathbb{N}
\]
\[
r\mapsto r, \quad q^{-1}\mapsto q^{-n},\quad r\in\mathbb{Q}.
\]
The lambda ring structure that we consider on $\mathbb{Q}[[q^{-1}]][[t]]$ is defined as:
\[
p_{n}:\mathbb{Q}[[q^{-1}]][[t]]\rightarrow\mathbb{Q}[[q^{-1}]][[t]] , \quad n\in\mathbb{N}
\]
\[
r\mapsto r, \quad q^{-1}\mapsto q^{-n},\quad t\mapsto t^n,\quad r\in\mathbb{Q}.
\]
\begin{definition}
Let $\Lambda$ be a lambda ring.
Let $F\in\text{Sym}_{\mathbb{Q}}[X]$ and $x\in\Lambda$. We define the plethystic action of $F$ on $x$ as follows: write $F$ as a polynomial in power sum symmetric functions, say $F=f(p_{1},p_{2},\ldots)$ for some $f\in \mathbb{Q}[p_{1},p_{2},\ldots]$, we set
\[
F[x]=f(p_{1}(x),p_{2}(x),\ldots).
\]
The plethystic action satisfies the following properties:
\[
     (FG)[x]=F[x]G[x],\quad (F+G)[x]=F[x]+G[x],\quad r[x]=r, \quad F,G\in\text{Sym}_{\mathbb{Q}}[X], r\in \mathbb{Q}, x\in\Lambda.
\]
For each $x\in\Lambda$, the plethystic action $F\mapsto F[x]$
gives a homomorphism of $\mathbb{Q}$-algebras from $\text{Sym}_{\mathbb{Q}}[X]$ to $\Lambda$.
\end{definition}
\begin{definition}
Let $R$ be a base ring containing $\mathbb{Q}$ and let $\Lambda$ be a topological lambda ring. For $x\in\Lambda$, define $\Exp[x]$ as:
\[
\Exp[x]=\exp\bigg(\sum_{n=1}^{\infty}\frac{p_{n}[x]}{n}\bigg)
\]
provided that the right hand side converges in the topology of $\Lambda$.
\end{definition}
\subsection{Counting vector bundles over $\mathbb{P}^1$ with nilpotent endomorphisms preserving flags at $0$ and $\infty$}
Let $\Xi_{n}:=\{e_{1}-e_{2},\ldots,e_{n-1}-e_{n}\}$ denote the set of simple roots of $GL_n$ relative to the diagonal torus $T_n$ and the Borel subgroup $B_n$ consisting of upper-triangular matrices. Consider the standard full flag $E_{\bullet}=\{E_{j}\}$ in $\mathbb{F}_{q}^{n}$. Let $J\subset\Xi_{n}$. Recall from Section \ref{parabolic subgroups} that $P_{J}$ denotes the standard parabolic subgroup of $GL_{n}$ corresponding to the subset $J$. Then $P_{J}$ is the stabilizer in $GL_{n}$ of the flag obtained by removing from $E_{\bullet}$ the terms $E_{j}$ for $e_{j}-e_{j+1}\in J$. From now on, we identify $\mathcal{P}(\Xi_{n})$ with the set of standard parabolic subgroups of $GL_{n}$ via $J\mapsto P_{J}$. 

Let $\Pi_{n}$ denote the set of partitions of $\{1,\cdots,n\}$. For any partition
$\nu=(\nu_{1}\geq\nu_{2}\geq\ldots\geq\nu_{l})\in\Pi_n$,
set 
\[
J(\nu):=\{e_{i}-e_{i+1}:i\neq\nu_{1},\nu_{1}+\nu_{2},\ldots,\nu_{1}+\nu_{2}+\ldots+\nu_{l}=n,  1\leq i\leq n-1\}.
\]
This gives an inclusion from $\Pi_{n}$ to $\mathcal{P}(\Xi_{n})$, $\nu\mapsto P_{J(\nu)}$, where the image consists of stabilizers in $GL_{n}$ of standard partial flags with jumps given by partitions. If we compose this map with the map that associates to each standard parabolic subgroup its Levi factor, then we get a bijection between the set of partitions of $n$ and $GL_{n}(\mathbb{F}_{q})$-conjugacy classes of Levi $\mathbb{F}_{q}$-subgroups of $GL_{n}$.
Define $\mu(\nu):\mathbb{G}_{m}\rightarrow T_n$ as:
\[
t\mapsto\diag(\overbrace{t,\ldots,t}^{\nu_{1}\text{ times}},\ldots,\overbrace{t^{l},\ldots,t^{l}}^{\nu_{l}\text{ times}}).
\]
Recall $L_{\mu}$ from Section \ref{main theorem}. We set $L_{\nu}:=L_{\mu(\nu)}$. Notice that we have $L_{\nu}\cong GL_{\nu_{1}}\times\ldots\times GL_{\nu_{l}}$ (see Section \ref{main theorem}).

For $\lambda\in\Pi_{n}$, we will denote the monomial symmetric function corresponding to $\lambda$ by $m_{\lambda} $ and the homogeneous symmetric function corresponding to $\lambda$ by $h_{\lambda}$. Before proceeding, we make the following convention.
\begin{convention}\label{symmetric with functions}
We identify symmetric functions of degree $n$
with the associate invariant functions on $\mathcal{P}(\Xi_{n})$ by identifying $m_{\lambda}$ with $\delta_{[J(\lambda)]}$, $\lambda\in\Pi_{n}$.
\end{convention}
Let $\mu\in X_{+}(T_{n})$. Recall from Section \ref{main theorem} that $[\mathcal{T}rip_{\mu}]$ is the function on $\mathcal{P}(\Xi_{n})\times\mathcal{P}(\Xi_{n})$ that counts the number of $\mathbb{F}_{q}$-points of $\mathcal{T}rip_{\mu}(J_{0},J_{\infty})$, $(J_{0},J_{\infty})\in\mathcal{P}(\Xi_{n})\times\mathcal{P}(\Xi_{n})$. Since $[St_{L_{\mu}}]$ is an associate invariant function (Corollary~\ref{sp and st are associate invariant}) and $\pi_{\mu}=\Delta_{GL_{n}}(\Pi_{\mu},\cdot)$ (Remark \ref{pi=delta}(ii)), $(\pi_{\mu}\otimes\pi_{\mu})([St_{L_{\mu}}])$ is associate invariant by Corollary~\ref{sp and st are associate invariant}. Now using Corollary~\ref{final},
we consider $[\mathcal{T}rip_{\mu}]$ as a symmetric function (see Convention~\ref{symmetric with functions}). Thus we can write $[\mathcal{T}rip_{\mu}]$ as: 
\[
[\mathcal{T}rip_{\mu}]=\sum_{(\nu^{0},\nu^{\infty})\in\Pi_{n}\times\Pi_{n}}|\mathcal{T}rip_{\mu}(J(\nu^{0}),J(\nu^{\infty}))|m_{\nu^{0}}(X)m_{\nu^{\infty}}(Y).
\]
For $\mu\in X_{+}(T_{n})$, define the symmetric function $C_{\mu}[X,Y;q]$ as:
\[
C_{\mu}[X,Y;q]=\frac{[\mathcal{T}rip_{\mu}]}{|\aut(\cE_{\mu})|}
\]
and consider the following element in $\text{Sym}_{\mathbb{Q}(q)}[X,Y][[t]]$:
\[
\Omega_{n,(0,\infty)}^{\leq 0}(\mathbb{P}^{1})[X,Y;q,t]
=
\displaystyle\sum_{\mu\in X_{+}(T_{n})^{*}}t^{-\deg(\cE_{\mu})}C_{\mu}[X,Y;q],
\]
where $X_{+}(T_{n})^{*}$ consists of cocharaters $\mu:\mathbb{G}_{m}\rightarrow T_{n}$ of the form
\[
t\mapsto\diag(\overbrace{t^{-d_{1}},\ldots,t^{-d_{1}}}^{\mu_{1}\text{ times}},\ldots,\overbrace{t^{-d_{m}},\ldots,t^{-d_{m}}}^{\mu_{m}\text{ times}}),\quad 0\leq d_{1}<\ldots<d_{m}, \mu_{i}>0, 1\leq i\leq m, \sum_{i=1}^{m}\mu_{i}=n.
\]
Explicitly,
\[
\Omega_{n,(0,\infty)}^{\leq 0}(\mathbb{P}^{1})[X,Y;q,t]
=
\displaystyle\sum_{\mu\in X_{+}(T_{n})^{*}}t^{-\deg(\cE_{\mu})}\sum_{(\nu^{0},\nu^{\infty})\in\Pi_{n}\times\Pi_{n}}\frac{|\mathcal{T}rip_{\mu}(J(\nu^{0}),J(\nu^{\infty}))|}{|\aut(\cE_{\mu})|}m_{\nu^{0}}(X)m_{\nu^{\infty}}(Y).
\]
Notice that $\Omega_{n,(0,\infty)}^{\leq 0}(\mathbb{P}^{1})[X,Y;q,t]$ defined above is the same as the one considered in \cite[Section 5.4]{Mel}.
Now using our techniques, we would like to re-derive the following result of Mellit \cite[Section 5.4]{Mel}.
\begin{proposition}\label{mellit's result}
The following holds as formal series in $t$ with coefficients in the completion of $\text{Sym}_{\mathbb{Q}(q)}[X,Y]$:
\[
\sum_{n=0}^{\infty}\Omega^{\leq 0}_{n,(0,\infty)}(\mathbb{P}^{1})[X,Y;q,t]
=
\Exp\Bigg[\frac{XY}{(q-1)(1-t)}\Bigg], \quad \text{ where } XY=\sum_{i,j}x_{i}y_{j}.
\]
\end{proposition}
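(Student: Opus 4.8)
The plan is to rewrite the left-hand side as an infinite product over rank-one contributions using the structural results already established, and then to recognize that product as the stated plethystic exponential.

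First I would normalize the summands. Combining Corollary~\ref{final} with Fact~\ref{aut of emu} and the dimension computation in the proof of Lemma~\ref{fiber} (which together give $|\aut(\cE_\mu)| = |L_\mu|\,q^{\dim\aut(\cE_\mu)-\dim L_\mu}$), one gets $C_\mu = [\mathcal{T}rip_\mu]/|\aut(\cE_\mu)| = \frac{1}{|L_\mu|}(\pi_\mu\otimes\pi_\mu)([St_{L_\mu}])$. For $G=GL_n$, the bundles occurring in $\Omega^{\le0}$ are the $\bigoplus_{k=1}^{s}\mathcal O(c_k)^{\oplus m_k}$ with $c_1<\cdots<c_s\le0$ (the superscript records that all summands have non-positive degree); such $\mu$ are indexed by an ordered composition $(m_1,\dots,m_s)$ of $n$ together with $c_\bullet$, with $L_\mu\cong GL_{m_1}\times\cdots\times GL_{m_s}$, $\deg\cE_\mu=\sum_k m_k c_k$, and (via Convention~\ref{symmetric with functions}) $C_\mu\in\Sym[X,Y]$.

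The crux is identifying the operations on $\Sym$. Under Convention~\ref{symmetric with functions}, the induction $\pi_\mu$ out of a standard Levi of $GL_n$ is multiplication in $\Sym$ (a standard fact, also visible from the double-coset formula and Remark~\ref{pi=delta}(iii)), and the coproduct $\Delta$ of Section~\ref{coproduct} is the \emph{plethystic} comultiplication $f[X]\mapsto f[XY]$: by Lemma~\ref{associateinvariantdelta}, when the target associate class is the one of $GL_m$ only the full-rank double coset survives, so $\Delta(m_{(m)})=m_{(m)}\otimes m_{(m)}$, i.e. $\Delta(p_m)=p_m\otimes p_m$, and this determines $\Delta$ on all of $\Sym$. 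Then $|L_\mu|=\prod_k|GL_{m_k}|$ and Lemma~\ref{lm1} give $C_\mu=\prod_k S_{m_k}$ in $\Sym[X,Y]$, with $S_m:=[St_{GL_m}]/|GL_m|$. Summing over $c_1<\cdots<c_s\le0$ with weight $t^{-\sum_k m_k c_k}$ (via $c_k=-(b_k+\cdots+b_s)$, $b_s\ge0$, $b_j\ge1$ for $j<s$) factors as $\frac{1}{1-t^{n}}\prod_{j=1}^{s-1}\frac{t^{m_1+\cdots+m_j}}{1-t^{m_1+\cdots+m_j}}$; expanding the geometric series, the double sum over all compositions and their exponents is re-indexed bijectively by finitely-supported maps $b\mapsto m_b$ on $\mathbb Z_{\ge0}$, so that
\[
\sum_{n\ge0}\Omega^{\le0}_{n,(0,\infty)}(\mathbb P^1)[X,Y;t]\;=\;\prod_{b\ge0}\Big(\sum_{m\ge0}t^{bm}S_m\Big).
\]

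It remains to prove the rank-one identity $\sum_{m\ge0}S_m=\Exp\!\big[\tfrac{XY}{q-1}\big]$. By Theorem~\ref{steinberg}(ii), $[St_{GL_m}]=\Delta([Sp_{GL_m}])$; by \eqref{levi1} and Proposition~\ref{nilpotent}, $[Sp_{GL_m}]/|GL_m|=\sum_{\lambda\vdash m}(\prod_i g_{\lambda_i})\,m_\lambda$ with $g_k=q^{\binom k2}/\prod_{i=1}^{k}(q^i-1)$, so $\sum_m [Sp_{GL_m}]/|GL_m|=\prod_i(\sum_{k\ge0}g_k x_i^k)$, which Euler's $q$-exponential identity (after passing to base $q^{-1}$) rewrites as $\prod_{i,\,j\ge1}(1-x_iq^{-j})^{-1}=\Exp\!\big[\tfrac{X}{q-1}\big]$. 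Applying $\Delta$, which sends $p_n[X]\mapsto p_n[X]p_n[Y]=p_n[XY]$ and therefore commutes with $\Exp$, yields $\sum_m S_m=\Exp\!\big[\tfrac{XY}{q-1}\big]$; hence $\sum_m t^{bm}S_m=\Exp\!\big[\tfrac{t^b XY}{q-1}\big]$ and $\prod_{b\ge0}(\cdots)=\Exp\!\big[\sum_{b\ge0}\tfrac{t^b XY}{q-1}\big]=\Exp\!\big[\tfrac{XY}{(q-1)(1-t)}\big]$, as claimed. The main obstacle I anticipate is the identification in the third paragraph — showing that $\Delta$ is the plethystic comultiplication (equivalently $\Delta(p_m)=p_m\otimes p_m$) and that $\pi_\mu$ is multiplication; this is what forces the \emph{product} $XY$ to appear and replaces Mellit's Hall-algebra input, after which the $q$-exponential manipulation and the degree-sum bookkeeping are routine.
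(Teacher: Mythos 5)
Your overall architecture is the same as the paper's: reduce to a rank-one plethystic identity via the factorization $C_\mu=\prod_k S_{m_k}$ (Lemma~\ref{lm1}, Lemma~\ref{lm2}), and then sum the degree generating series as an infinite product. Your $q$-exponential computation for the rank-one piece is correct, the degree bookkeeping ($c_k=-(b_k+\cdots+b_s)$, geometric series, re-indexing by finitely supported maps) matches the paper's step-by-step manipulation, and the observation $\Delta_{GL_m}(m_{(m)})=m_{(m)}\otimes m_{(m)}$ is correct.

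There is, however, a genuine gap at the crux. You claim that knowing $\Delta(p_m)=p_m\otimes p_m$ for all $m$ ``determines $\Delta$ on all of $\Sym$,'' and later that $\Delta$ ``therefore commutes with $\Exp$.'' Both of these require $\Delta$ (the double-coset coproduct $\Delta'_{GL_n}$ assembled over all $n$) to be an \emph{algebra} homomorphism, i.e.\ $\Delta(fg)=\Delta(f)\Delta(g)$; the power sums generate $\Sym_{\mathbb Q}$ only as an algebra, and $\Exp$ is defined via $\exp$, which turns sums of $p_n$ into products. Multiplicativity of $\Delta'$ is not established anywhere in the paper, is not immediate from the definition via minimal double-coset representatives, and is in fact essentially equivalent to a Mackey-type compatibility $\Delta_{G}\circ\pi_\mu=(\pi_\mu\otimes\pi_\mu)\circ\Delta_{L_\mu}$ for standard Levi subgroups of $GL_{m+n}$ -- nontrivial and not assumed. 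Your sentence ``showing that $\Delta$ is the plethystic comultiplication (equivalently $\Delta(p_m)=p_m\otimes p_m$)'' conflates the two: they are equivalent only once multiplicativity is known. The paper sidesteps this entirely in Proposition~\ref{coproduct agrees with coproduct}: it computes $\Delta'_{GL_n}(m_\nu)$ coefficient by coefficient using the explicit double-coset formula for $n_\nu^{\lambda,\mu}$ from Lemma~\ref{associateinvariantdelta} and matches it against the expansion of $m_\nu(XY)$, establishing $\Delta'_{GL_n}=\Delta^n$ on the full monomial basis without ever invoking multiplicativity of $\Delta'$. You should either reproduce that computation or supply a separate proof of the Mackey-type multiplicativity; as written, the step from $\Delta(p_m)=p_m\otimes p_m$ to ``$\Delta$ commutes with $\Exp$'' does not follow.

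Two minor points. First, to pass from $C_\mu=\frac{1}{|L_\mu|}(\pi_\mu\otimes\pi_\mu)([St_{L_\mu}])$ to $C_\mu=\prod_k S_{m_k}$, you use Lemma~\ref{lm1} to split $[St_{L_\mu}]$ and the identification of $\pi_\mu$ with the symmetrization/product map (Lemma~\ref{lm2}); this is the same route as the paper's Corollary~\ref{factorization}, and the paper additionally uses Corollary~\ref{contribution trivial bundle} to identify $S_m$ with $[St_{GL_m}]/|GL_m|$ -- worth stating explicitly. Second, your $q$-exponential step should be phrased carefully as working in $\Sym_{\mathbb Q[[q^{-1}]]}[X]$ and then descending to $\Sym_{\mathbb Q(q)}[X]$, as the paper does in the proof of Proposition~\ref{sp=hn} and Lemma~\ref{hnq}; otherwise the infinite product $\prod_{j\ge1}(1-x_iq^{-j})^{-1}$ is not a priori meaningful.
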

Recall from Section \ref{coproduct} that $[Sp_{GL_{n}}]$ is the function on $\mathcal{P}(\Xi_{n})$ that counts the number of $\mathbb{F}_{q}$-points of $Sp_{GL_{n}}(J)$, $J\in\mathcal{P}(\Xi_{n})$. Using Corollary \ref{sp and st are associate invariant} and Convention \ref{symmetric with functions}, we consider $[Sp_{GL_{n}}]$ as a symmetric function.

As a first step in proving Proposition \ref{mellit's result}, we prove the following:
\begin{proposition}\label{sp=hn}
The following holds in $\text{Sym}_{\mathbb{Q}(q)}[X]$:
\[
h_{n}\left[\frac{X}{q-1}\right]=\frac{1}{|GL_{n}|}[Sp_{GL_{n}}].
\]
\end{proposition}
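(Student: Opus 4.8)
The plan is to compute both sides explicitly and check they agree coefficient-by-coefficient in the monomial basis $\{m_\lambda\}$. On the right-hand side, one expands $h_n\!\left[\tfrac{X}{q-1}\right]$ using the plethystic identity $\tfrac{1}{q-1}=\sum_{k\geq 1}q^{-k}$, so that $\tfrac{X}{q-1}$ is, plethystically, the alphabet consisting of each $x_i$ with multiplicity ``$q^{-1}+q^{-2}+\cdots$''. Since $h_n[Z]=\sum_{\lambda\vdash n} m_\lambda[Z]$ would not be quite right — rather $\sum_n h_n[Z]=\prod_i \frac{1}{1-z_i}$ — the cleanest route is: the generating function $\sum_n h_n\!\left[\tfrac{X}{q-1}\right] = \Exp\!\left[\tfrac{X}{q-1}\right] = \prod_i \prod_{k\geq 1}\frac{1}{1-q^{-k}x_i}$. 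Extracting the degree-$n$ part and expanding each $\prod_{k\geq1}(1-q^{-k}x_i)^{-1}$ as a power series, the coefficient of $m_\lambda$ (for $\lambda=(\lambda_1\geq\cdots\geq\lambda_\ell)\vdash n$) is a product over the parts of geometric-type sums; after a standard manipulation this coefficient equals $\prod_{i} \frac{q^{-\lambda_i}}{\text{(a $q$-factorial-type denominator)}}$, i.e. it is governed by the generating function for partitions into parts of each size, which is exactly $\frac{1}{|GL_{\lambda_1}|\cdots|GL_{\lambda_\ell}|}$ up to powers of $q$ coming from $|GL_m|=q^{\binom{m}{2}}\prod_{j=1}^m(q^j-1)$.

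On the left-hand side, using Convention~\ref{symmetric with functions} we have $[Sp_{GL_n}]=\sum_{\lambda\vdash n}|Sp_{GL_n}(J(\lambda))|\,m_\lambda(X)$, and by \eqref{for GLn} (the formula $|Sp_H(J)|=\frac{|H|}{|L_J|}q^{\dim L_J-\rk L_J}$) together with $L_{J(\lambda)}\cong GL_{\lambda_1}\times\cdots\times GL_{\lambda_\ell}$, we get
\[
\frac{1}{|GL_n|}|Sp_{GL_n}(J(\lambda))| = \frac{q^{\dim L_{J(\lambda)}-\rk L_{J(\lambda)}}}{|L_{J(\lambda)}|} = \prod_{i=1}^{\ell}\frac{q^{\lambda_i^2-\lambda_i}}{|GL_{\lambda_i}|}.
\]
So the identity to be proved reduces to the purely combinatorial statement that for every partition $\lambda=(\lambda_1,\ldots,\lambda_\ell)$ of $n$, the coefficient of $m_\lambda$ in $h_n\!\left[\tfrac{X}{q-1}\right]$ equals $\prod_{i}\frac{q^{\lambda_i^2-\lambda_i}}{|GL_{\lambda_i}|}$. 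Because $h_n[Z]$ is multiplicative in the natural sense over the parts once one passes to the generating series, and both sides factor over parts, I would first reduce to the case $\lambda=(m)$ a single part: here the claim is that the coefficient of $m_{(m)}(X)=\sum_i x_i^m$ in $h_n\!\left[\tfrac{X}{q-1}\right]$ — more precisely, extracting the $x_1^m$-coefficient — equals $\frac{q^{m^2-m}}{|GL_m|}=\frac{q^{m^2-m}}{q^{\binom m2}\prod_{j=1}^m(q^j-1)}$. This single-part identity is the substitution $p_k\mapsto \frac{p_k[X]}{q^k-1}$ applied to $h_m$, evaluated on a one-variable alphabet, and it is a known $q$-series identity equivalent to $h_m\!\left[\tfrac{1}{q-1}\right]=\frac{1}{(q-1)(q^2-1)\cdots(q^m-1)}\cdot q^{\binom m2}$ (up to reindexing); one proves it by the generating-function computation $\sum_m h_m[Z]\, u^m = \Exp[Zu]$ with $Z=\tfrac{1}{q-1}$, giving $\prod_{k\geq1}(1-q^{-k}u)^{-1}$, and then comparing with the $q$-binomial theorem.

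The main obstacle I anticipate is bookkeeping of the powers of $q$ and the precise normalization in the plethystic substitution: one must be careful whether $\tfrac{X}{q-1}$ means $\sum_k q^{-k}X$ or $\sum_{k\geq 0}q^{-k}X$, since this shifts everything by a factor, and one must reconcile $|GL_m|=q^{\binom m2}\prod_{j=1}^m(q^j-1)$ with the exponent $\lambda_i^2-\lambda_i=2\binom{\lambda_i}{2}$ appearing from $\dim L_{J(\lambda)}-\rk L_{J(\lambda)}=\sum_i(\lambda_i^2-\lambda_i)$. Once the single-part case is pinned down, the general case follows because both the plethystic exponential and the product formula are multiplicative over the blocks of $\lambda$, so no further genuinely new computation is needed — the reduction to parts via $\Exp$ and the $q$-binomial theorem is the entire content. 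I would structure the write-up as: (1) recall $\frac{1}{|GL_n|}[Sp_{GL_n}]=\sum_\lambda \prod_i \frac{q^{\lambda_i^2-\lambda_i}}{|GL_{\lambda_i}|}m_\lambda$ from \eqref{for GLn}; (2) compute $\Exp\!\left[\tfrac{X}{q-1}\right]=\prod_{i,k\geq1}(1-q^{-k}x_i)^{-1}$ and extract $m_\lambda$-coefficients; (3) invoke the $q$-binomial theorem to match single parts; (4) conclude by multiplicativity.
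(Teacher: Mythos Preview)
Your proposal is correct and follows essentially the same route as the paper. Both arguments use \eqref{for GLn} to write $\frac{1}{|GL_n|}[Sp_{GL_n}]=\sum_\lambda \prod_i \frac{q^{\lambda_i^2-\lambda_i}}{|GL_{\lambda_i}|}\,m_\lambda$, reduce the plethystic side to the identity $h_n\!\left[\tfrac{X}{q-1}\right]=\sum_\lambda m_\lambda(X)\prod_i h_{\lambda_i}\!\left[\tfrac{1}{q-1}\right]$ (the paper obtains this by specializing the Cauchy-type identity $h_n(XY)=\sum_\nu m_\nu(X)h_\nu(Y)$, you obtain it by expanding the infinite product $\Exp[X/(q-1)]=\prod_{i,k\ge1}(1-q^{-k}x_i)^{-1}$; these are the same computation), and then evaluate $h_m\!\left[\tfrac{1}{q-1}\right]$ by a standard $q$-series identity (the paper's Lemma~\ref{hnq} via the generating function $H(w)=\exp\!\big(\int P(w)\,dw\big)$, your invocation of the $q$-binomial theorem). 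Your worry about the normalization $\sum_{k\ge1}q^{-k}$ versus $\sum_{k\ge0}q^{-k}$ is exactly the issue the paper handles by first getting $h_n[qX/(q-1)]$ and then pulling out the factor $q^n$; once you fix that convention the bookkeeping goes through as you describe.
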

\begin{proof}
By $\eqref{for GLn}$, the desired equality can be rewritten as:
\begin{equation}\label{0}
h_{n}\left[\frac{X}{q-1}\right]=\displaystyle\sum_{\nu\in\Pi_{n}}\frac{q^{\text{dim}(L_{\nu})}}{q^{n}|L_{\nu}|}m_{\nu}(X).
\end{equation}
We have the following idenitity(see \cite[Chapter 4, Section 2]{Mac}) in $\text{Sym}_{\mathbb{Q}(q)}[X,Y]$:
\begin{equation}\label{3}
    h_n(XY)=\sum_{\nu\in\Pi_{n}}m_{\nu}(X)h_{\nu}(Y).
\end{equation}
Then the specialization $x_{i}\mapsto x_{i},y_{j}\mapsto q^{-(j-1)},i,j\in\mathbb{N}$ gives a homomorphism of lambda rings $\text{Sym}_{\mathbb{Q}(q)}[X,Y]\rightarrow \Sym_{\mathbb{Q}[[q^{-1}]]}[X]$. Thus this specialization commutes with the plethystic action and we have
\[
h_{n}\left[X\bigg(1+\frac{1}{q}+\cdots\frac{1}{q^{j}}+\cdots\bigg)\right]
=
\sum_{\nu\in\Pi_{n}}m_{\nu}(X)h_{\nu}\left[1+\frac{1}{q}+\cdots\frac{1}{q^{j}}+\cdots\right]\quad \text{in } \text{Sym}_{\mathbb{Q}[[q^{-1}]]}[X]
\]
\[
h_{n}\left[\frac{qX}{q-1}\right]
=
\sum_{\nu\in\Pi_{n}}m_{\nu}(X)h_{\nu}\left[\frac{q}{q-1}\right]\quad \text{in } \text{Sym}_{\mathbb{Q}[[q^{-1}]]}[X].
\]
Since the terms of the above identity lie in $\text{Sym}_{\mathbb{Q}(q)}[X]$, the equality holds in $\text{Sym}_{\mathbb{Q}(q)}[X]$.

Since $h_{\nu}[qA]=q^{|\nu|}h_{\nu}[A]$ for any $A\in\text{Sym}_{\mathbb{Q}(q)}[X]$, we get
\begin{equation}\label{new equation}
h_{n}\left[\frac{X}{q-1}\right]
=
\sum_{\nu\in\Pi_{n}}m_{\nu}(X)h_{\nu}\left[\frac{1}{q-1}\right].
\end{equation}
Now we need to calculate $h_{\nu}\left[\frac{1}{q-1}\right]$, this follows from the following lemma:
\begin{lemma}\label{hnq}
The following holds in $\mathbb{Q}(q)$:
\[
{h_{n}\left[\frac{1}{1-q}\right]=\frac{1}{(1-q)(1-q^2)\cdots(1-q^n)}}.
\]
\end{lemma}
\begin{proof}
Let $H(w):=\sum_{r\geq 0}h_{r}(X)w^{r}\in(\text{Sym}_{\mathbb{Q}(q)}[X])[[w]]$ be the generating function for the homogeneous symmetric functions and let $P(w):=\sum_{r\geq 1}p_{r}(X)w^{r-1}\in(\text{Sym}_{\mathbb{Q}(q)}[X])[[w]]$ be the generating function for the power sum symmetric functions. The lemma follows from \cite[Chapter I, Section 2, Example 4]{Mac} and the following well-known identity in $(\text{Sym}_{\mathbb{Q}(q)}[X])[[w]]$:
\[
H(w)=\exp\left(\int P(w)dw\right).
\]
\end{proof}
We return to the proof of Proposition \ref{sp=hn}. The specialization $q\mapsto 1/q, x_{i}\mapsto x_{i}, i\in\mathbb{N}$ gives an automorphism of lambda rings $\text{Sym}_{\mathbb{Q}(q)}[X]\rightarrow\text{Sym}_{\mathbb{Q}(q)}[X]$. Thus, this specialization commutes with the plethystic action on $\Sym_{\mathbb{Q}(q)}[X]$ and we have
\[
h_{n}\left[\frac{1}{1-\frac{1}{q}}\right]=\frac{1}{(1-\frac{1}{q})(1-\frac{1}{q^2})\cdots(1-\frac{1}{q^n})}
=
\frac{q q^2\cdots q^n}{(q-1)(q^2-1)\cdots(q^n-1)}
\]
Since $h_n[qA]=q^nh_{n}[A]$ for any $A\in\text{Sym}_{\mathbb{Q}(q)}[X]$, we get
\[
h_{n}\left[\frac{1}{q-1}\right]=\frac{1}{q^n}\frac{q q^2\cdots q^n}{(q-1)(q^2-1)\cdots(q^n-1)}.
\]
Now \eqref{new equation} gives
\begin{equation}\label{5}
h_{n}\left[\frac{X}{q-1}\right]
=
\sum_{\nu\in\Pi_{n}}m_{\nu}(X)\prod_{i=1}^{k}h_{\nu_i}\left[\frac{1}{q-1}\right]
=
\sum_{\nu\in\Pi_{n}}m_{\nu}(X)\frac{1}{q^n}\frac{\prod_{i=1}^{k}q q^2\cdots q^{\nu_i}}{\prod_{i=1}^{k}(q-1)(q^2-1)\cdots(q^{\nu_i}-1)}.
\end{equation}
The coefficient of $m_{\nu}(X)$ in equation \eqref{5} is equal to
\[
\frac{1}{q^n}\frac{\prod_{i=1}^{k}(q q^2\cdots q^{\nu_i})^{2}}{\prod_{i=1}^{k}q^{\nu_{i}}(q^{\nu_{i}}-q^{\nu_{i}-1})(q^{\nu_i}-q^{\nu_{i}-2})\cdots(q^{\nu_i}-1)}=\frac{1}{q^n}\frac{q^{\sum \nu_{i}^{2}}q^{\sum \nu_i}}{q^{n}\prod_{i=1}^{k}|GL_{\nu_{i}}(\mathbb{F}_{q})|}=\frac{1}{q^n}\frac{q^{\sum \nu_{i}^{2}}}{\prod_{i=1}^{k}|GL_{\nu_{i}}(\mathbb{F}_{q})|}.
\]
Since $L_{\nu}\cong GL_{\nu_{1}}\times\ldots\times GL_{\nu_{l}}$, Proposition \ref{sp=hn} follows.
\end{proof}
Next, consider one of the two standard coproduct on symmetric functions:
\[
    \Delta^{n}:\Sym^{n}_{\mathbb{Z}}[X] \rightarrow\Sym^{n}_{\mathbb{Z}}[X]\otimes\Sym^{n}_{\mathbb{Z}}[Y]=\Sym^{n}_{\mathbb{Z}}[X,Y], \quad f(X)\mapsto f(XY).
\]
Let $\Delta'_{GL_{n}}$ denote the restriction of $\Delta_{GL_{n}}$ to the associate invariant functions. We would like to show that $\Delta^n$ agrees with $\Delta'_{GL_{n}}$ by identifying symmetric functions of degree $n$ with the associate invariant functions on $\mathcal{P}(\Xi_{n})$ (see Convention \ref{symmetric with functions}).
First we need a notation.
\begin{notation}
For any sequence of positive integers $\alpha=(\alpha_{1},\ldots,\alpha_{m})$ such that $\sum_{j=1}^{m}\alpha_{j}=n$, we will denote the subgroup $S_{\alpha_{1}}\times\ldots\times S_{\alpha_{m}}$ of $S_{n}$ by $S_\alpha$.
\end{notation}
We have the following proposition.
\begin{proposition}\label{coproduct agrees with coproduct}
We have
$
\Delta'_{GL_{n}}=\Delta^n.
$
\end{proposition}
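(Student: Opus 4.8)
The plan is to unwind both coproducts to explicit sums indexed by double cosets of Young subgroups and check term-by-term agreement. First I would recall that under Convention~\ref{symmetric with functions} the basis element $m_\lambda$ of $\Sym^n_{\mathbb Z}[X]$ corresponds to $\delta_{[J(\lambda)]}$, so it suffices to compare $\Delta^n(m_\lambda)$ with $\Delta'_{GL_n}(\delta_{[J(\lambda)]})$ for every partition $\lambda\vdash n$. On the symmetric function side, applying $\Delta^n(f)=f(XY)$ to $h_\lambda$ and using \eqref{3} repeatedly gives an expansion of $h_\lambda(XY)$ in the $m$-basis; dually, since $m_\lambda=\sum_{\mu}(\text{Gram matrix entries})\,h_\mu$ up to the standard change of basis, it is cleaner to verify the identity on the $h_\lambda$ basis, where the comultiplication $\Delta^n(h_n)=h_n(XY)=\sum_{\nu\vdash n}m_\nu(X)h_\nu(Y)$ and $\Delta^n(h_\lambda)=\prod_i\Delta^{\lambda_i}(h_{\lambda_i})$ is completely transparent.

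Second, I would compute $\Delta_{GL_n}(\delta_{[J(\lambda)]})$ using Lemma~\ref{associateinvariantdelta}: the structure constant $n^{\mathcal O_1,\mathcal O_2}_{\mathcal O}$ counts double cosets $w\in W_{J_{\mathcal O_1}}\backslash S_n/W_{J_{\mathcal O_2}}$ such that $\Phi(J_{\mathcal O_1})\cap w\cdot\Phi(J_{\mathcal O_2})$ is $W$-conjugate to $\Phi(J_{\mathcal O})$. For $GL_n$ the Levi subgroups are indexed by partitions, $W_H=S_n$, and $W_{J(\alpha)}=S_\alpha$ for a composition $\alpha$; the combinatorics of $\Phi(J(\alpha))\cap w\cdot\Phi(J(\beta))$ for $w\in S_\alpha\backslash S_n/S_\beta$ is governed exactly by the pigeonhole/matrix-of-intersections description: a double coset representative corresponds to an $\ell(\alpha)\times\ell(\beta)$ matrix of non-negative integers with row sums $\alpha$ and column sums $\beta$, and the Levi $L_{J(\alpha)}\cap wL_{J(\beta)}w^{-1}$ has type given by the (nonzero) entries of that matrix read in order. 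This is precisely the classical combinatorial description underlying the coproduct on symmetric functions — the same matrices enumerate the terms in $\prod_i h_{\alpha_i}(XY)$ after re-expanding by \eqref{3} in each factor.

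Third, I would match the two computations: the number of integer matrices with prescribed margins whose entry-multiset equals a given partition $\gamma$ is exactly the coefficient that appears both in $\Delta^n$ (coproduct structure constant for $h$'s, equivalently the Littlewood–Richardson-free ``internal'' multiplicities for complete homogeneous functions) and in $\Delta'_{GL_n}$ via Lemma~\ref{associateinvariantdelta}. Concretely, writing everything in the basis $\{h_\lambda\}$ reduces the claim to the identity $\Delta^n(h_\lambda)=\sum_{\text{matrices }M}h_{\mathrm{row}(M)}\otimes h_{\mathrm{col}(M)}$ on one side and the double-coset count $n^{\mathcal O_1,\mathcal O_2}_{[J(\lambda)]}$ on the other, and these are literally the same enumeration. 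I would then invoke linearity over $\mathbb Z$ and the fact that $\{h_\lambda\}$ (equivalently $\{m_\lambda\}$, equivalently $\{\delta_{[J(\lambda)]}\}$) spans $\Sym^n$ to conclude $\Delta'_{GL_n}=\Delta^n$.

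The main obstacle will be step two: carefully establishing the bijection between $S_\alpha\backslash S_n/S_\beta$ and integer matrices with margins $(\alpha,\beta)$, and, crucially, identifying the type of the intersected Levi $L_{J(\alpha)\cap w\cdot J(\beta)}$ with the multiset of matrix entries — one must check that $\Phi(J(\alpha))\cap w\cdot\Phi(J(\beta))$ decomposes as the $A_{m_{ij}-1}$ pieces corresponding to the blocks of the matrix, using \eqref{another decomposition of intersection} and the explicit action of permutations on the $e_i-e_j$ roots. Once this dictionary is in place the comparison with the classical coproduct on $\Sym$ (which is exactly this matrix enumeration, cf. \cite[Chapter I]{Mac}) is immediate, and associate-invariance (Corollary~\ref{sp and st are associate invariant}, Lemma~\ref{associateinvariantdelta}) guarantees everything is well-defined on $\sim_{GL_n}$-classes, i.e.\ on symmetric functions.
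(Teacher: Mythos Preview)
Your core strategy is the same as the paper's: both reduce the claim to the combinatorics of double cosets $S_\alpha\backslash S_n/S_\beta$ and the type of the Levi $L_{J(\alpha)}\cap wL_{J(\beta)}w^{-1}$, then match this against the expansion of $m_\gamma(XY)$. The paper phrases this via equivalence relations $\sim_\lambda$ on $\{1,\dots,n\}$ and their common refinements, and massages both sides into weighted sums over $w\in S_n$; your matrix-with-prescribed-margins parametrization is the standard and arguably cleaner packaging of the identical combinatorics (the matrix entry $m_{ij}$ is the size of the intersection of the $i$-th $\lambda$-block with $w$ applied to the $j$-th $\mu$-block, which is exactly $|\{k: k\sim_\lambda i\text{-th block},\, k\sim_{w(\mu)} j\text{-th block}\}|$).

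One genuine issue: drop the detour through the $h_\lambda$ basis. Your displayed formula $\Delta^n(h_\lambda)=\sum_{\text{matrices }M} h_{\mathrm{row}(M)}\otimes h_{\mathrm{col}(M)}$ is not correct for the operation $f\mapsto f(XY)$; what \eqref{3} gives is $h_n(XY)=\sum_{\nu\vdash n} m_\nu(X)h_\nu(Y)$, and multiplying these for the parts of $\lambda$ does not produce a sum over contingency tables in the $h\otimes h$ basis. More to the point, Lemma~\ref{associateinvariantdelta} hands you the structure constants of $\Delta'_{GL_n}$ in the $\delta_{[J(\cdot)]}$-basis, which under Convention~\ref{symmetric with functions} is the $m$-basis, so changing to $h$'s only obscures the comparison. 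Stay in the $m$-basis on both sides: the coefficient of $m_\alpha(X)m_\beta(Y)$ in $m_\gamma(XY)$ is precisely the number of nonnegative integer matrices with row sums $\alpha$, column sums $\beta$, and nonzero-entry multiset $\gamma$, which by your step two is exactly $n_\gamma^{\alpha,\beta}$. With that correction your argument goes through and is equivalent to the paper's.
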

\begin{proof}
Since $m_{\nu}$, $\nu\in\Pi_{n}$ form a basis of $\Sym_{\mathbb{Z}}^{n}[X]$, it is enough to check that $\Delta'_{GL_{n}}$ agrees with $\Delta^{n}$ on this basis.
We re-write the conclusion of Lemma \ref{associateinvariantdelta} for $GL_n$. Let $\nu$ be a partition of $n$. It gives an equivalence relation $\sim_\nu$ on $\{1,\ldots,n\}$, where $i\sim_\nu j$ if and only if there exists a $t$ such that $\nu_1+\ldots+\nu_t\leq i,j<\nu_1+\ldots+\nu_{t+1}$. Note that $S_n$ acts on $\{1,\ldots,n\}$ and thus on the equivalence relations. For an equivalence relation $\sim$ on $\{1,\ldots,n\}$, we will write $Part(\sim)\in\Pi_n$ for the corresponding partition of $n$, that is, the ordered sequence of sizes of equivalence classes. 
By Lemma \ref{associateinvariantdelta}, we have
\[
    \Delta'_{GL_{n}}(m_\nu)=\sum_{\lambda,\mu\in\Pi_{n}}n_{\nu}^{\lambda,\mu}m_\lambda\otimes m_\mu,
\]
where 
\[
n_{\nu}^{\lambda,\mu}=\big|\{w\in S_{\lambda}\backslash S_{n}/S_{\mu}:Part(\sim_{\lambda}\cap w(\sim_{\mu}))=\nu\}\big|.
\]
Thus we have
\begin{equation}\label{eq:GLn}
  n_\nu^{\lambda,\mu}
  =
  \sum_{\substack{w\in S_n\colon
  \\
  Part(\sim_\lambda\cap w(\sim_\mu))=\nu}}\frac{|w^{-1}S_{\lambda}w\cap S_{\mu}|}{|S_\mu||S_\lambda|}.
\end{equation}
Now consider the coproduct $\Delta^n$.
We have
\[
    \Delta^{n}(m_\nu)=\sum_{[(i_1,j_1),\ldots,(i_n,j_n)]}(X_{i_1}Y_{j_1})\ldots(X_{i_n}Y_{j_n})
\]
where the sum is over all multisets $[(i_t,j_t)]$ such that the multiplicities of elements are given by $\nu$.

The group $S_n$ is acting naturally on length $n$ sequences. Let $(\mu)$ be the standard sequence
\[
    \underbrace{1,\ldots,1}_{\mu_1\text{ times}},\underbrace{2,\ldots,2}_{\mu_2\text{ times}},\ldots.
\]
In $\Delta^{n}(m_{\nu})$, $X^\lambda Y^\mu$ occurs as:
\[
    \sum_{j_1,\ldots,j_n}\frac{1}{|\text{orbit of }S_\lambda\text{ on }j_{1},\ldots,j_{n}|}(X_1Y_{j_1}\ldots X_1Y_{j_{\lambda_1}})(X_2Y_{j_{\lambda_1+1}}\ldots X_2Y_{j_{\lambda_2}})\ldots,
\]
where the summation is over all sequences $j_1,\ldots,j_n$ such that $[j_{1},\ldots,j_{n}]=[(\mu)]$ and $Part(\sim_\lambda\cap\sim_j)=\nu$, where $\sim_j$ denotes the equivalence relation $t\sim_j s$ iff $j_t=j_s$.
Let $\widetilde{n}_{\nu}^{\lambda,\mu}$ denote the coefficient of $m_{\lambda}\otimes m_{\mu}$ in $\Delta^{n}(m_{\nu})$.
The condition $[j_1,\ldots,j_n]=[(\mu)]$ is equivalent to the existence of $w\in S_n$ such that $w\cdot(\mu)=(j_1,\ldots,j_n)$, in which case $w(\sim_\mu)=\sim_j$. Since there are exactly $|S_\mu|$ such $w$, we get
\[
\widetilde{n}_\nu^{\lambda,\mu}
  =
  \sum_{\substack{w\in S_n\colon
  \\
  Part(\sim_\lambda\cap w(\sim_\mu))=\nu}}\frac{|w^{-1}S_{\lambda}w\cap S_{\mu}|}{|S_\mu||S_\lambda|},
\]
which agrees with~\eqref{eq:GLn}.
This finshes the proof of Proposition \ref{coproduct agrees with coproduct}.
\end{proof}
Recall the vector bundle $
\cE$ of rank $n$ over $\mathbb{P}^1$ in \cite[Section 5.4]{Mel}, which is defined as:
\[
\cE=\mathcal{O}(-d_{1})^{\mu_{1}}\oplus...\oplus\mathcal{O}(-d_{m})^{\mu_{m}},\quad 0\leq d_{1}<\ldots<d_{m},\quad \mu_{i}>0,\quad 1\leq i\leq m, \quad \sum_{i=1}^{m}\mu_{i}=n.
\]
Let $\mu:\mathbb{G}_{m}\rightarrow T_{n}$ be the cocharacter of the form 
\[
t\mapsto\diag(\overbrace{t^{-d_{1}},\ldots,t^{-d_{1}}}^{\mu_{1}\text{ times}},\ldots,\overbrace{t^{-d_{m}},\ldots,t^{-d_{m}}}^{\mu_{m}\text{ times}}),\quad 0\leq d_{1}<\ldots<d_{m},\quad \mu_{i}>0,\quad 1\leq i\leq m, \quad \sum_{i=1}^{m}\mu_{i}=n.
\]
Then we have $\mu\in X_{+}(T_{n})$ and we get that $\cE=\cE_{\mu}$ (see \ref{basic_principal bundles}).

Let us write $\mu=(\widetilde{\mu}_{1},\ldots\widetilde{\mu}_{m})$, where $\widetilde{\mu}_{k}:\mathbb{G}_{m}\rightarrow T_{\mu_{k}}$, $1\leq k\leq m$ is the cocharacter
\[
t\mapsto\diag(\overbrace{t^{-d_{k}},\ldots,t^{-d_{k}}}^{\mu_{k}\text{ times}}).
\]
We have $\widetilde{\mu}_{k}\in X_{+}(T_{\mu_{k}})$. 
The following is a key factorization result, which is a corollary of Theorem \ref{trip}:
\begin{corollary}\label{factorization}
For the vector bundle $\cE$ over $\mathbb{P}^1$, we have 
\[
C_{\mu}[X,Y;q]=\prod_{k=1}^{m}h_{\mu_{k}}\bigg[\frac{XY}{q-1}\bigg].
\]
\end{corollary}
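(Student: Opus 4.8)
The plan is to move everything to symmetric functions via Convention~\ref{symmetric with functions} and Corollary~\ref{final}, and to reduce the identity to the statement that, under this dictionary, the map $\pi_\mu$ is ordinary multiplication of symmetric functions.

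First I would pin down the two sides. For each $k$ the cocharacter $\widetilde\mu_k$ is central in $GL_{\mu_k}$, so $L_{\widetilde\mu_k}=GL_{\mu_k}$, $\Pi_{\widetilde\mu_k}=\Xi_{\mu_k}$, the map $\pi_{\widetilde\mu_k}$ is the identity, and since no root pairs positively with $\widetilde\mu_k$ we get $\aut(\cE_{\widetilde\mu_k})\cong GL_{\mu_k}$ from Fact~\ref{aut of emu}; thus Corollary~\ref{final} (equivalently Corollary~\ref{contribution trivial bundle}) gives $C_{\widetilde\mu_k}[X,Y;q]=[St_{GL_{\mu_k}}]/|GL_{\mu_k}|$. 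For $\mu$ itself the exponents $-d_1<\cdots<-d_m$ are pairwise distinct, so $L_\mu\cong GL_{\mu_1}\times\cdots\times GL_{\mu_m}$, whence $\Pi_\mu=\bigsqcup_k\Xi_{\mu_k}$, $|L_\mu|=\prod_k|GL_{\mu_k}|$, and $[St_{L_\mu}]=[St_{GL_{\mu_1}}]\otimes\cdots\otimes[St_{GL_{\mu_m}}]$ by Lemma~\ref{lm1}. Also $|\aut(\cE_\mu)|=|L_\mu|\,q^{\dim\aut(\cE_\mu)-\dim L_\mu}$ by Fact~\ref{aut of emu}, the remaining factors being affine spaces, so Corollary~\ref{final} simplifies to $C_\mu[X,Y;q]=(\pi_\mu\otimes\pi_\mu)([St_{L_\mu}])/|L_\mu|$.

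The key claim is that, after identifying associate invariant functions on $\mathcal{P}(\Pi_\mu)=\prod_k\mathcal{P}(\Xi_{\mu_k})$ with $\bigotimes_k\Sym^{\mu_k}$, one has $\pi_\mu(f_1\otimes\cdots\otimes f_m)=f_1\cdots f_m$ in $\Sym^n$. Granting this, and using that $\pi_\mu\otimes\pi_\mu$ acts on the two tensor slots (the $X$-alphabets and the $Y$-alphabets) separately, one gets $(\pi_\mu\otimes\pi_\mu)\big(\bigotimes_k[St_{GL_{\mu_k}}]\big)=\prod_k[St_{GL_{\mu_k}}]$ as an element of $\Sym^n[X,Y]$, and therefore $C_\mu[X,Y;q]=\big(\prod_k[St_{GL_{\mu_k}}]\big)\big/\prod_k|GL_{\mu_k}|=\prod_k C_{\widetilde\mu_k}[X,Y;q]$, which is the claimed identity. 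As a sanity check one may instead use Proposition~\ref{sp=hn}, Theorem~\ref{steinberg}$(ii)$ and Proposition~\ref{coproduct agrees with coproduct} to rewrite $[St_{GL_{\mu_k}}]=|GL_{\mu_k}|\,h_{\mu_k}[XY/(q-1)]$, so that the claim reads $(\pi_\mu\otimes\pi_\mu)\big(\bigotimes_k h_{\mu_k}[XY/(q-1)]\big)=\prod_k h_{\mu_k}[XY/(q-1)]$.

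To prove the key claim I would check that $\pi_\mu$ agrees with multiplication on the basis $\{h_{\lambda^{(1)}}\otimes\cdots\otimes h_{\lambda^{(m)}}:\lambda^{(k)}\vdash\mu_k\}$, using two facts. First, for a cocharacter $\nu$ of $T_N$ whose blocks form a partition $\nu\vdash N$, the constant function $\mathbf1$ on $\mathcal{P}(\Pi_\nu)$ corresponds to $\bigotimes_i h_{\nu_i}$, and $\pi_\nu(\mathbf1)(J(\rho))=|W_\nu\backslash W_{GL_N}/W_{J(\rho)}|$ equals the number of nonnegative integer matrices with row sums $\nu$ and column sums $\rho$, i.e.\ the coefficient of $m_\rho$ in $h_\nu$; hence $\pi_\nu(\mathbf1)=h_\nu=\prod_i h_{\nu_i}$. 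Second, the maps $\pi$ are transitive: if $\Pi'\subset\Pi_\mu$ cuts out the further sub-Levi of type $\lambda^{(1)}\cup\cdots\cup\lambda^{(m)}$, then $\pi_\mu\circ\big(\pi^{GL_{\mu_1}}_{\lambda^{(1)}}\otimes\cdots\otimes\pi^{GL_{\mu_m}}_{\lambda^{(m)}}\big)=\pi^{GL_n}_{\Pi'}$. Writing $h_{\lambda^{(k)}}=\pi^{GL_{\mu_k}}_{\lambda^{(k)}}(\mathbf1)$, applying transitivity and then the first fact yields $\pi_\mu(h_{\lambda^{(1)}}\otimes\cdots\otimes h_{\lambda^{(m)}})=h_{\lambda^{(1)}\cup\cdots\cup\lambda^{(m)}}=h_{\lambda^{(1)}}\cdots h_{\lambda^{(m)}}$, as needed. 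The step I expect to be the main obstacle is transitivity of $\pi$: it should follow from Remark~\ref{pi=delta}(iii) and coassociativity of the coproduct $\Delta_G$ (or, concretely, from the standard refinement of a double coset $W_{\Pi'}\backslash W/W_J$ through the intermediate subgroup $W_{\Pi_\mu}$ together with the decomposition~\eqref{another decomposition of intersection}), but organizing that bookkeeping carefully is where the genuine work lies.
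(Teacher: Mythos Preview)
Your approach is essentially identical to the paper's: reduce via Corollary~\ref{final}, Fact~\ref{aut of emu}, Lemma~\ref{lm1} and Corollary~\ref{contribution trivial bundle} to the claim that $\pi_\mu(f_1\otimes\cdots\otimes f_m)=f_1\cdots f_m$ in $\Sym^n$, which is precisely the paper's Lemma~\ref{lm2}. The only divergence is in how this lemma is argued. The paper dispatches it in a single sentence by observing that, for $GL_n$ under Convention~\ref{symmetric with functions}, the map $\pi_\mu$ is simply the symmetrization map from block-symmetric functions to fully symmetric ones; your route through transitivity of $\pi$ and an $h$-basis check is correct but more laborious. In particular, the transitivity step you flag as the main obstacle can be bypassed entirely: the same double-coset--equals--contingency-table computation you already use to show $\pi_\nu(\mathbf 1)=h_\nu$ extends directly to arbitrary $f_k$, since both $\pi_\mu(f_1\otimes\cdots\otimes f_m)(J(\rho))$ and the coefficient of $m_\rho$ in $f_1\cdots f_m$ unwind to the same sum over nonnegative integer matrices with row sums $(\mu_k)$ and column sums $\rho$.
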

\begin{proof}
Let $f_k$ be an associate invariant function on $\mathcal{P}(\Xi_{\mu_{k}})$, where $\Xi_{\mu_{k}}$ is the set of simple roots of $GL_{\mu_k}$, $1\leq k\leq m$. According to our Convention~\ref{symmetric with functions}, $f_k$ is viewed as an element of $\Sym_{\mathbb{Z}}^{\mu_{k}}[X]$. However, we can also view $f_k$ as a symmetric polynomial $f'_k$ in the variables $x_{\mu_{1}+\ldots+\mu_{k-1}+1}$, \ldots, $x_{\mu_{1}+\ldots+\mu_k}$. Recall the map $\pi_{\mu}$ defined in Section \ref{main theorem}.
In the case of $GL_n$, this map relates products for the two different interpretations of the associate invariant functions $f_{k}$, $1\leq k\leq m$ in the following way:
\begin{lemma}\label{lm2}
We have
\[
    f_1\ldots f_k=\pi_\mu(f'_1\ldots f'_k).
\]    
\end{lemma}
\begin{proof}
  In the case of $GL_n$ the map $\pi_\mu$ is the symmetrization map.
\end{proof}
We return to the proof of Corollary~\ref{factorization}. Recall from Section \ref{coproduct} that $[St_{GL_{\mu_{k}}}]$ is the function on $\mathcal{P}(\Xi_{\mu_{k}})\times\mathcal{P}(\Xi_{\mu_{k}})$ that counts the number of $\mathbb{F}_{q}$-points of $St_{GL_{\mu_{k}}}(J_{1},J_{2})$, $(J_{1},J_{2})\in\mathcal{P}(\Xi_{\mu_{k}})\times\mathcal{P}(\Xi_{\mu_{k}})$. Using Corollary \ref{sp and st are associate invariant}, we consider $[St_{GL_{\mu_{k}}}]$ as a symmetric function (see Convention~\ref{symmetric with functions}).
We can also view $[St_{GL_{\mu_k}}]$ as a symmetric function $[St_{GL_{\mu_k}}]'$ in the variables $x_{\mu_{1}+\ldots+\mu_{k-1}+1}$, \ldots, $x_{\mu_{1}+\ldots+\mu_k}$. 
Now by Corollary~\ref{final}, Fact~\ref{aut of emu} and Lemma~\ref{lm1}, we have
\[
    C_\mu[X,Y;q]=(\pi_\mu\otimes\pi_\mu)\big([St_\mu]\big)\big/\prod_{k}|GL_{\mu_{k}}|=(\pi_\mu\otimes\pi_\mu)\Big(\prod_k[St_{\mu_k}]'\Big)\big/\prod_{k}|GL_{\mu_{k}}|.
\]
Using Lemma~\ref{lm2} in each variable, we get
\[
    C_\mu[X,Y;q]=
    \prod_k[St_{\mu_k}]/|GL_{\mu_{k}}|
    =
\prod_{k=1}^{m}\frac{\Delta_{GL_{\mu_k}}\big([Sp_{GL_{\mu_{k}}}]\big)}{|GL_{\mu_{k}}|},
\]
where the last equality follows from Theorem \ref{steinberg} $(ii)$. Now the corollary follows from Proposition \ref{sp=hn} and Proposition \ref{coproduct agrees with coproduct}.
\end{proof}
Now we are ready to prove Proposition \ref{mellit's result}. We have 
\[
  \displaystyle\sum_{n=0}^{\infty}\Omega_{n,(0,\infty)}^{\leq 0}(\mathbb{P}^{1})[X,Y;q,t]
  =
  \sum_{n=0}^{\infty}\displaystyle\sum_{\mu\in X_{+}(T_{n})^{*}}t^{-\deg(\cE_{\mu})}C_{\mu}[X,Y;q]
  =
  \sum_{n=0}^{\infty}\displaystyle\sum_{\substack{\mu=(\mu_{1},\ldots,\mu_{m}),\\d=(0\leq d_{1}<d_{2}<\ldots< d_{m}):\\\sum\mu_{k}=n}}t^{\sum_{k=1}^{m}d_{k}\mu_{k}}h_{\mu}\Bigg[\frac{XY}{q-1}\Bigg],
\]
where the second equality follows from Corollary \ref{factorization}. The above is equal to
\[
  \displaystyle\prod_{d=0}^{\infty}\displaystyle\sum_{k=0}^{\infty}t^{dk}h_{k}\Bigg[\frac{XY}{q-1}\Bigg]
  =
  \displaystyle\prod_{d=0}^{\infty}\Exp\Bigg[\frac{t^{d}XY}{q-1}\Bigg]
  =
  \Exp\Bigg[\frac{XY}{q-1}\sum_{d=0}^{\infty}t^{d}\Bigg]
  =
  \textrm{Exp}\Bigg[\frac{XY}{(q-1)(1-t)}\Bigg].
\]
This finishes the proof of Proposition \ref{mellit's result}.
\qed

\section{Conflict of interest} The author has no conflict of interest to declare that are relevant to this article.

\Address
\end{document}